 \newtheorem{thm}{Theorem}[section]
 \newtheorem{lemma}[thm]{Lemma} \newtheorem{cor}[thm]{Corollary} \newtheorem{rem}[thm]{Remark}
 \numberwithin{equation}{section}
\def\dist{{\rm dist}}
\def\supp{{\rm supp \,}}
\def \rdist {{\rm \, rdist}}
\def \inrdist {{\rm \, inrdist}}
\def \ec {{\rm ec}}
\def \child{{\rm ch}}
\def \ch{{\rm ch}}
\def\BMO{{\rm BMO}}
\def\CMO{{\rm CMO}}
\begin{document}

\title[Sparse domination for compactness on weighted spaces]{Sparse domination results for compactness on weighted spaces}
\author{Cody B. Stockdale}
\address{Cody B. Stockdale, Department of Mathematics and Statistics, Washington University in St. Louis, One Brookings Drive, St. Louis, MO, 63130, USA}
\email{codystockdale@wustl.edu}
\author{Paco Villarroya}
\address{Paco Villarroya, Department of Mathematics, Temple University, N. Broad St., Philadelphia, PA, 19122, USA}
\email{paco.villarroya@temple.edu}
\thanks{Villarroya has been partially supported by Spanish Ministerio de Ciencia, Innovaci\'on y Universidades, project grant PGC2018-095366-B-I00}
\author{Brett D. Wick}
\address{Brett D. Wick, Department of Mathematics and Statistics, Washington University in St. Louis, One Brookings Drive, St. Louis, MO, 63130, USA}
\email{bwick@wustl.edu}
\thanks{B. D. Wick's research is supported in part by National Science Foundation DMS grants \#1560955, \#1800057, \#2349868, and \#2054863, and by Australian Research Council grant DP 190100970.}

\begin{abstract}
By means of appropriate sparse bounds, 
we 
deduce compactness on weighted $L^p(w)$ spaces, $1<p<\infty$, for all Calder\'on-Zygmund operators having compact extensions on $L^2(\mathbb{R}^n)$. Similar methods lead to new results on boundedness and compactness of Haar multipliers on weighted spaces. In particular, we prove weighted bounds for weights in a class strictly larger than the typical $A_p$ class.
\end{abstract}

\maketitle


\section{Introduction}
\label{Introduction}

Calder\'on-Zygmund theory is concerned with $L^2(\mathbb{R}^n)$ bounded singular integral operators, $T$, of the form 
$$
    Tf(x)=\int_{\mathbb{R}^n}K(x,y)f(y)\,dy,
$$
where $f$ is 
compactly supported, $x\notin \supp f$, and 
$K$ is a kernel function defined on $(\mathbb{R}^n \times \mathbb{R}^{n}) \setminus \{(x,y):x=y\}$ that, for some $C_K>0$ and $0<\delta \leq 1$, satisfies
$$
    |K(x,y)|\leq \frac{C_K}{|x-y|^n}
$$
whenever $x\neq y$ and
$$
    |K(x,y)-K(x',y')|\leq C_K\frac{|x-x'|^{\delta}+|y-y'|^{\delta}}{|x-y|^{n+\delta}}
$$
whenever $|x-x'|+|y-y'|\leq \frac{1}{2}|x-y|$. The fact that these operators, known as Calder\'on-Zygmund operators, extend to be bounded on $L^p(\mathbb{R}^n)$ for all $1<p<\infty$ is of central importance in harmonic analysis. 

In \cite{HMW1973}, Hunt, Muckenhoupt, and Wheeden extended the  Calder\'on-Zygmund theory to weighted spaces when they characterized the classes of weights, $A_p$, such that the Hilbert transform is bounded on $L^p(w)$ for $1<p<\infty$. A positive almost everywhere and locally integrable function $w$ is an \emph{$A_p$ weight} if $$[w]_{A_p}:=\sup_{Q}\langle w \rangle_Q \langle w^{1-p'}\rangle_Q^{p-1} < \infty,$$
where $\langle w\rangle_Q := \frac{1}{|Q|}\int_Qw(x)\,dx$, $p'$ satisfies $\frac{1}{p}+\frac{1}{p'}=1$, and the supremum is taken over all cubes $Q\subseteq \mathbb{R}^n$ with sides parallel to the coordinate axes.

Shortly later, it was shown that any Calder\'on-Zygmund operator $T$ is bounded on $L^p(w)$ for all $1<p<\infty$ and all $w \in A_p$. However, determining the optimal dependence of $\|T\|_{L^p(w)\rightarrow L^p(w)}$ on $[w]_{A_p}$ was a much more difficult problem. Extrapolation methods allowed for a reduction to the case $p=2$, and the following optimal estimate became known as the $A_2$ conjecture: if $T$ is a Calder\'on-Zygmund operator and $w \in A_2$, then 
$$
    \|Tf\|_{L^2(w)}\lesssim [w]_{A_2}\|f\|_{L^2(w)}
$$
for any $f \in L^2(w)$. 
This question was first solved by Hyt\"onen in the celebrated paper \cite{H2012}. 

In \cite{L2013}, Lerner pursued a different approach to the $A_2$ conjecture using a bound by positive and local operators, called \emph{sparse operators}.
A sparse operator has the form
$$
    Sf:=\sum_{Q\in\mathcal{S}}\langle f\rangle_Q\mathbbm{1}_{Q}
$$ 
for locally integrable $f$, where $\mathcal{S}$ is a collection of cubes satisfying the \emph{sparseness condition}: for every $Q \in \mathcal{S}$, 
$$
    \sum_{P\in \text{ch}_{\mathcal{S}}(Q)} |P|\leq \frac{1}{2}|Q|,
$$ 
where $\text{ch}_{\mathcal{S}}(Q)$ is the set of maximal elements of $\mathcal{S}$ that are strictly contained in $Q$.
A refinement of Lerner's result states that there exists a constant $C>0$ such that for any compactly supported $f \in L^1(\mathbb{R}^n)$, there is a sparse operator $S$ satisfying
$$
    |Tf(x)|\leq C S|f|(x)
$$
for almost every $x\in \supp f$, see \cites{CAR2016,L2017,LN2019}. Since optimal weighted bounds for sparse operators are immediate, this method gives a different proof of the $A_2$ conjecture. Such ``sparse domination'' results have been of immense interest following \cite{L2013}. 

It is natural and of independent interest to study compactness of singular integral operators in addition to the previously described theory concerning boundedness. In \cite{V2015}, the second author began this study by describing necessary and sufficient conditions for Calder\'on-Zygmund operators to extend compactly on $L^p(\mathbb{R})$ for $1<p<\infty $.
Since then, a complete theory for compact Calder\'on-Zygmund operators on 
$L^p(\mathbb{R}^n)$ and the corresponding endpoints has been established, see \cites{PPV2017, OV2017,V2019}. 
As shown in these papers, if a Calder\'on-Zygmund operator extends compactly 
on 
$L^p(\mathbb{R}^n)$, then the kernel $K$ satisfies the estimates 
$$
    |K(x,y)|\lesssim \frac{F_K(x,y)}{|x-y|^n}
$$
whenever $x\neq y$ and
$$
    |K(x,y)-K(x',y')|\leq \frac{|x-x'|^{\delta}+|y-y'|^{\delta}}{|x-y|^{n+\delta}}F_K(x,y)
$$
for some $0<\delta \leq 1$ whenever $|x-x'|+|y-y'|\leq\frac{1}{2}|x-y|$, where $F_K$ is a bounded function
satisfying $$\lim_{|x-y|\rightarrow \infty }F_K(x,y)=\lim_{|x-y|\rightarrow 0 }F_K(x,y)=\lim_{|x+y|\rightarrow \infty }F_K(x,y)=0.$$
The main result of this theory we use here is 
the characterization for compactness of Calder\'on-Zygmund operators at the endpoint case from $L^1(\mathbb{R}^n)$ to $L^{1,\infty} (\mathbb{R}^n)$. The
explicit statement of this result
can be found in 
Theorem \ref{tildeT}
of Section \ref{CZOs}. 

The aim of the current paper is to extend the theory of compact Calder\'on-Zygmund operators on $L^p(\mathbb{R}^n)$ to weighted 
Lebesgue spaces using sparse domination methods.
\begin{thm}
\label{CZOWeightedCompactness}
Let $T$ be a Calder\'on-Zygmund operator that extends compactly on $L^2(\mathbb{R}^n)$. If $1<p<\infty$ and $w \in A_p$, then $T$ extends compactly on $L^p(w)$.
\end{thm}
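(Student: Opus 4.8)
The plan is to run a ``compact sparse domination'' argument parallel to the classical sparse domination proof of the $A_2$ theorem, but keeping track of the decay function $F_K$ so that the dominating sparse operator is itself a compact operator on $L^p(w)$. First I would record the qualitative structural fact, coming from the compact Calder\'on--Zygmund theory cited above (and made precise in Theorem \ref{tildeT}), that a CZO $T$ compact on $L^2(\mathbb R^n)$ has a kernel obeying the size and smoothness bounds with an extra factor $F_K(x,y)$ that tends to $0$ as $|x-y|\to\infty$, as $|x-y|\to 0$, and as $|x+y|\to\infty$. The target is then: for every compactly supported $f\in L^1(\mathbb R^n)$ there is a sparse family $\mathcal S$ and a ``small'' function $\varepsilon$ on cubes (with $\varepsilon_Q\to 0$ in the three relevant limiting regimes, controlled by $F_K$) such that
$$
|Tf(x)|\le C\sum_{Q\in\mathcal S}\varepsilon_Q\langle |f|\rangle_Q\mathbbm 1_Q(x)
$$
for a.e.\ $x\in\supp f$. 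This is obtained by the usual Lerner--Nazarov stopping-time / local mean oscillation machinery: decompose $T=\sum_k$ into smooth truncations at scales $2^k$, iterate a Calder\'on--Zygmund stopping argument on the maximal cubes where the local oscillation of $T(f\mathbbm 1_{3Q})$ is large, and at each stopping cube estimate the oscillation using the kernel bound. The point is that each such estimate now carries a supremum of $F_K$ over the relevant annulus, which is small precisely when the cube is very small, very large, or very far from the origin; collecting these gives the coefficient $\varepsilon_Q$.

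Next I would reduce matters to a single ``compact sparse operator'' $S_\varepsilon f=\sum_{Q\in\mathcal S}\varepsilon_Q\langle f\rangle_Q\mathbbm 1_Q$ and prove that such an operator is compact on $L^p(w)$ for $w\in A_p$. Boundedness is immediate (it is dominated by the ordinary sparse operator, for which the $A_p$ bound is classical). For compactness I would use the standard criterion for $L^p(w)$: an operator is compact if it is the norm limit of operators $S_\varepsilon^{(N)}$ that are ``uniformly local, uniformly non-degenerate, and equismooth,'' i.e.\ a weighted Fr\'echet--Kolmogorov / Riesz--type characterization. Concretely, split $\mathcal S$ according to the side length $\ell(Q)$ and the distance of $Q$ from the origin: the pieces with $\ell(Q)$ very large or very small or $Q$ very far out contribute arbitrarily small operator norm because $\varepsilon_Q$ is small there, so $S_\varepsilon$ is approximated in norm by finite-scale, spatially localized sparse operators $S_\varepsilon^{(N)}$; and each $S_\varepsilon^{(N)}$ is an average of finitely many (in scale) truncated averaging operators, each of which is easily seen to be compact on $L^p(w)$ (it maps into a fixed finite-dimensional-in-scale family of locally constant functions on a bounded set, and one checks smoothness/tightness using the $A_p$ condition to compare $w$-averages over nearby cubes). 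Hence $S_\varepsilon$ is compact, and therefore so is $T$ by domination, once we note that a bounded operator dominated in the above pointwise sense by a compact positive operator on $L^p(w)$ is itself compact — this last implication follows since compactness of the positive dominator passes to an equi-integrability/tightness statement that transfers to $Tf$, exactly as in the unweighted treatment of \cites{PPV2017,OV2017,V2019}.

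The main obstacle I expect is twofold. First, getting the \emph{right} decay coefficient $\varepsilon_Q$ out of the stopping-time construction: one must run the Lerner decomposition keeping the factor $F_K$ attached through every iteration, and verify that the ``triple'' behavior ($\ell(Q)\to 0$, $\ell(Q)\to\infty$, $\dist(Q,0)\to\infty$) of $\varepsilon_Q$ is inherited from the three limits of $F_K$ — in particular handling the $|x+y|\to\infty$ condition, which is not scale-invariant and interacts awkwardly with the dyadic stopping cubes, so some care is needed to phrase it as ``$Q$ far from the origin relative to its size, or $Q$ large.'' Second, proving compactness of $S_\varepsilon$ on $L^p(w)$ rather than just on $L^p$: the weighted Fr\'echet--Kolmogorov criterion requires controlling $w$-measures of nearby cubes, and here one leans on the $A_p$ (or reverse H\"older) property of $w$ to guarantee the needed uniform absolute continuity and tightness. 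Once these two points are in place, the extrapolation-free argument delivers compactness on $L^p(w)$ for all $1<p<\infty$ directly.
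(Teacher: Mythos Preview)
The final step of your argument has a genuine gap. You write that ``a bounded operator dominated in the above pointwise sense by a compact positive operator on $L^p(w)$ is itself compact,'' justified by saying compactness of the dominator yields an equi-integrability/tightness statement that transfers to $\{Tf\}$. There are two problems. First, the sparse collection $\mathcal S$ produced by any Lerner-type stopping argument \emph{depends on $f$}: what you actually obtain is, for each $f$, a family $\mathcal S_f$ with $|Tf|\le C\sum_{Q\in\mathcal S_f}\varepsilon_Q\langle|f|\rangle_Q\mathbbm 1_Q$. There is no single positive operator $S_\varepsilon$ dominating $T$, so proving that each fixed-$\mathcal S$ sparse operator is compact does not imply compactness of $T$. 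Second, even granting a fixed positive compact $K$ with $|Tf|\le K|f|$, your Fr\'echet--Kolmogorov reasoning does not close: pointwise domination transfers equi-integrability and tightness to $\{Tf:\|f\|\le 1\}$ but \emph{not} the equi-continuity (translation) condition, which is indispensable for norm precompactness in $L^p(w)$. (The implication $|Tf|\le K|f|\Rightarrow T$ compact is in fact true on $L^p(w)$ for $1<p<\infty$, but it requires a Dodds--Fremlin domination theorem for Banach lattices, not the tightness argument you sketch --- and in any case it still needs a fixed $K$.)

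The paper circumvents this entirely by never invoking compactness of a dominating operator. After subtracting a finite-rank paraproduct to form $\tilde T=T-\Pi^*_{P_N(T^*1)}$, the sparse bound is proved not for $\tilde T$ but for the \emph{linear} operator $P_N^\perp\tilde T$, and with a small \emph{constant} in front rather than cube-dependent coefficients: for every $\varepsilon>0$ there is $N_0$ so that for $N>N_0$ and every compactly supported $f$ one has $|P_N^\perp\tilde Tf|\lesssim\varepsilon\,S|f|$, where $S$ is an ordinary sparse operator (whose family still depends on $f$). The standard $A_p$ bound for sparse operators --- which is uniform over all sparse families --- then gives $\|P_N^\perp\tilde T\|_{L^p(w)\to L^p(w)}\lesssim\varepsilon\,[w]_{A_p}^{\max\{1,p'/p\}}$. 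Since $P_N$ is a fixed finite-rank projection, $\tilde T$ is the operator-norm limit of the finite-rank operators $P_N\tilde T$, hence compact. Only \emph{boundedness} of sparse operators is used, and the $f$-dependence of $\mathcal S$ is harmless because that bound is uniform in $\mathcal S$.
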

\noindent The proof of Theorem \ref{CZOWeightedCompactness} involves establishing an appropriate sparse domination result which is interesting in its own right, Theorem \ref{domtilde}. The details are described in Section 3. 

It is worth noting that although our proof of Theorem \ref{CZOWeightedCompactness} is direct, it is possible to achieve weighted compactness results via extrapolation methods, see for example the subsequent paper of Hyt\"onen and Lappas \cite{HL2020}. The sparse technology also allows us to deduce results that are not attainable with extrapolation as we will next describe.   

Motivated by results of \cites{TTV2015,L2017}, we also study properties of Haar multiplier operators. For a bounded sequence of real numbers indexed by the standard dyadic grid of cubes $\mathcal{D}$ on $\mathbb{R}^n$, $\{\varepsilon_Q\}_{Q\in\mathcal{D}}$, the associated \emph{Haar multiplier}, $T$, is given by
$$
    Tf=\sum_{Q\in\mathcal{D}}\varepsilon_Q\langle f,h_Q \rangle h_Q,
$$
where $h_Q$ is the Haar function adapted to $Q$. For generality, we work with Haar multipliers in the setting of arbitrary Radon measures on $\mathbb{R}^n$. See Section 2 for precise details.

Estimates for Haar multiplier operators are often similar to those satisfied by Calder\'on-Zygmund operators but are easier to establish because of a Haar multiplier's diagonal structure. 
In this case, we obtain the following sparse bound. 
\begin{thm}
\label{HaarSparseBound}
Let $T$ be a Haar multiplier adapted to a Radon measure $\mu$ and a bounded sequence of real numbers $\{\varepsilon_Q\}_{Q \in \mathcal{D}}$. Assume that $\mu$ is supported in a dyadic cube $Q_0$. If $f$ is bounded with compact support, then there exists an operator $S_{\varepsilon}$ satisfying 
$$
    |Tf(x)|\lesssim S_{\varepsilon }|f|(x):=\sum_{Q\in \mathcal S}\tilde \varepsilon_Q \langle |f|\rangle_{Q}\mathbbm{1}_{Q}(x)
$$
for almost every $x \in \supp f$,
where $\mathcal{S}$ is a sparse collection of cubes, ${\displaystyle \tilde \varepsilon_{Q}:=\sup_{Q'\in {\mathcal D}(Q)}|\varepsilon_{Q'}|}$, and $\mathcal{D}(Q)$ is the set of dyadic cubes properly contained in $Q$.
\end{thm}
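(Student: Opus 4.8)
The plan is to run the by-now-standard stopping-time construction of a sparse dominating family, taking advantage of the diagonal (martingale-transform) structure of a Haar multiplier. First I would make two harmless reductions. Since $\langle f,h_Q\rangle$ depends only on $f$ restricted to $\supp\mu\subseteq Q_0$, I may assume $\supp f\subseteq Q_0$; and since $\mu$ is supported in $Q_0$ the terms with $Q\not\subseteq Q_0$ vanish $\mu$-a.e., so $Tf=\sum_{Q\subseteq Q_0}\varepsilon_Q\langle f,h_Q\rangle h_Q$, and after passing to the dyadic parent of $Q_0$ we may further assume $\mu$ charges a single dyadic child of $Q_0$, whence $h_{Q_0}\equiv 0$ and $Tf=\sum_{Q\subsetneq Q_0}\varepsilon_Q\langle f,h_Q\rangle h_Q$. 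Write $\Delta_Q f:=\langle f,h_Q\rangle h_Q$ for the $\mu$-martingale difference and, for a dyadic cube $R$, $T_R f:=\sum_{Q\subsetneq R}\varepsilon_Q\Delta_Q f$, so that $T_R f$ is supported in $R$, depends only on $f\mathbbm{1}_R$, and $Tf=T_{Q_0}f$ $\mu$-a.e. I will use two classical martingale-transform facts: $T_R$ is of weak type $(1,1)$ with respect to $\mu$ with constant $\lesssim\sup_{Q\subsetneq R}|\varepsilon_Q|=\tilde\varepsilon_R$, as is its maximal truncation $T_R^{*}f:=\sup_j\bigl|\sum_{Q\subsetneq R,\ \ell(Q)\ge 2^j}\varepsilon_Q\Delta_Q f\bigr|$; and $T_{Q_0}f$, being an $L^2(\mu)$-bounded martingale transform of $f$, converges $\mu$-a.e.

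Next I would build $\mathcal S$ by recursion from $Q_0$. For $P\subsetneq R$ put $v_P:=\sum_{P\subsetneq Q\subsetneq R}\varepsilon_Q\Delta_Q f$, which is constant on $P$ because each $\Delta_Q f$ with $Q\supsetneq P$ is. Given $R\in\mathcal S$, let $\ch_{\mathcal S}(R)$ consist of the maximal dyadic $P\subsetneq R$ for which either $\langle|f|\rangle_P>\lambda\langle|f|\rangle_R$ or $|v_P|>\lambda\,\tilde\varepsilon_R\,\langle|f|\rangle_R$. The cubes of the first type are contained in $\{M^d_R(f\mathbbm{1}_R)>\lambda\langle|f|\rangle_R\}$, of $\mu$-measure $\le\lambda^{-1}\mu(R)$; those of the second type are contained in $\{T_R^{*}f>\lambda\tilde\varepsilon_R\langle|f|\rangle_R\}$ (since $\sup_{P\ni x}|v_P|\le T_R^{*}f(x)$), of $\mu$-measure $\lesssim\lambda^{-1}\mu(R)$ by the weak $(1,1)$ bound. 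Choosing $\lambda$ large gives $\sum_{P\in\ch_{\mathcal S}(R)}\mu(P)\le\frac12\mu(R)$, so $\mathcal S$ is sparse. Set $\Theta_R f:=T_R f-\sum_{P\in\ch_{\mathcal S}(R)}T_P f$. Along the chain of $\mathcal S$-cubes through a given point the functions $\Theta_R f$ telescope, and the tail $T_{R_k(x)}f$ tends to $0$ as $k\to\infty$ by the $\mu$-a.e. convergence noted above, so $Tf=T_{Q_0}f=\sum_{R\in\mathcal S}\Theta_R f$ $\mu$-a.e.

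It then suffices to prove the pointwise bound $|\Theta_R f|\le C\,\tilde\varepsilon_R\,\langle|f|\rangle_R$ $\mu$-a.e. on $R$ for each $R\in\mathcal S$, for then $|Tf(x)|\le\sum_{R\ni x}|\Theta_R f(x)|\lesssim\sum_{R\in\mathcal S}\tilde\varepsilon_R\langle|f|\rangle_R\mathbbm{1}_R(x)=S_\varepsilon|f|(x)$. On $R\setminus\bigcup\ch_{\mathcal S}(R)$ one has $\Theta_R f=T_R f$, which is the $\mu$-a.e. limit of its truncations, and since no dyadic cube through $x$ was selected, every such truncation is $\le\lambda\tilde\varepsilon_R\langle|f|\rangle_R$; so the bound holds there. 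On a selected cube $P$, $\Theta_R f\,\mathbbm{1}_P=(v_P+\varepsilon_P\Delta_P f)\mathbbm{1}_P$, and since the parent $\pi P$ was not selected we get $\langle|f|\rangle_{\pi P}\le\lambda\langle|f|\rangle_R$, $|v_{\pi P}|\le\lambda\tilde\varepsilon_R\langle|f|\rangle_R$, hence $|v_P|\le\lambda\tilde\varepsilon_R\langle|f|\rangle_R+|\varepsilon_{\pi P}|\,\|\Delta_{\pi P}f\,\mathbbm{1}_P\|_\infty$. The crux — and the step I expect to be the main obstacle — is bounding the martingale-difference contributions $\|\Delta_{\pi P}f\,\mathbbm{1}_P\|_\infty$ and $\|\Delta_P f\,\mathbbm{1}_P\|_\infty$ by an absolute multiple of $\langle|f|\rangle_R$ for a general Radon measure $\mu$, where a single child of a cube can carry almost all of its $\mu$-mass so that $\|\Delta_Q f\|_\infty\lesssim\langle|f|\rangle_Q$ may fail outright. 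One must use here the structure of the $\mu$-adapted dyadic grid and the hypothesis $\supp\mu\subseteq Q_0$ together with the fact that $\pi P$ was not stopped, and, where necessary, peel off from $P$ the $\mu$-small set on which $|\Delta_P f|$ is large (whose measure is controlled through $\|\Delta_P f\|_{L^1(\mu)}\le 2\int_P|f|\,d\mu$), absorbing that exceptional part into an extra layer of the sparse sum. Everything else is bookkeeping: the packing estimates give sparseness, and the telescoping together with the pointwise bound give the stated domination.
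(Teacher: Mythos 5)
Your overall architecture is the same as the paper's — a Lerner--Lacey stopping-time construction driven by weak $(1,1)$ bounds for a truncated maximal multiplier and a maximal function adapted to the coefficients — and you have correctly diagnosed the one step that the diagonal structure does \emph{not} trivialize for a general Radon measure: controlling the martingale differences $\Delta_P f$ and $\Delta_{\pi P}f$ on a selected cube $P$, where a single child may carry nearly all the $\mu$-mass of its parent so that $\|\Delta_Q f\mathbbm{1}_{Q'}\|_\infty\lesssim\langle|f|\rangle_{\widehat{Q}}$ can fail for a child $Q'$ of $\widehat{Q}$ with small $\mu$-measure. But you stop at identifying that obstacle and only gesture at ``peeling off an exceptional set and absorbing it into an extra sparse layer,'' which is not a proof and in fact is not what makes the argument go through.

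The paper's way around this is a splitting you never invoke: from \eqref{product},
\[
\langle f, h_Q\rangle h_Q \;=\; \langle f\rangle_Q\,\mathbbm{1}_Q \;+\; a_Q,
\qquad
|a_Q|\le 3\langle |f|\rangle_{\widehat{Q}}\,\mathbbm{1}_{\widehat{Q}}.
\]
The point is that one should \emph{not} try to bound $\Delta_P f$ pointwise on $P$ at all. Instead one isolates the piece $\varepsilon_P\langle f\rangle_P\mathbbm{1}_P$ and adds it verbatim to the sparse operator (this is exactly the term $\tilde\varepsilon_P\langle|f|\rangle_P\mathbbm{1}_P$ appearing in $S_\varepsilon|f|$), while the residual $a_I$, summed over $I\in\ch(\widehat{P})$, is bounded not by averages over $P$ but by $\langle|f|\rangle_{\widehat{P}}$, which is then controlled by $M_\varepsilon f(y)$ at a witness point $y\in\widehat{P}\setminus E_{Q_0}$ furnished by maximality of $P$. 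This is also why the paper's auxiliary maximal function is
\[
M_\varepsilon f(x)=\sup_{Q\ni x}\max_{Q'\in\ch(Q)}|\varepsilon_{Q'}|\,\langle|f|\rangle_Q,
\]
with the coefficient attached to the \emph{children} of the averaging cube: it is tailored to dominate precisely the $a_I$-residuals. Without this split, the pointwise bound $|\Theta_Rf|\lesssim\tilde\varepsilon_R\langle|f|\rangle_R$ on selected cubes genuinely fails for nondoubling $\mu$, so your proof as written has a gap at exactly the step you flagged.

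Two smaller remarks. First, your reduction ``pass to the dyadic parent so that $h_{Q_0}\equiv 0$'' is unnecessary (the paper simply observes $\langle f,h_Q\rangle=0$ for $Q\in\mathcal D$ with $Q_0\subseteq\widehat{Q}$), and changing $Q_0$ would also change the family $\mathcal{D}(Q_0)$ over which $\tilde\varepsilon_{Q_0}$ is defined, so it should be avoided. Second, the weak $(1,1)$ bound for $T_R^\ast$ with constant $\lesssim\sup_{Q\subsetneq R}|\varepsilon_Q|$ is not ``classical'' in the nonhomogeneous setting and requires the Calder\'on--Zygmund decomposition adapted to a general Radon measure (Lemma~\ref{CZDecomposition}, proved via the L\'opez-S\'anchez--Martell--Parcet construction); if you rely on it, you need to cite or reproduce that argument.
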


The first consequence of Theorem \ref{HaarSparseBound} is a weighted bound for Haar multipliers with weights in a class strictly larger than $A_p$. For a bounded sequence of real numbers $\{\varepsilon_Q\}_{Q\in \mathcal{D}}$, $0<q<\infty$, and $1<p<\infty$, we say that a nonnegative locally integrable function $w$ is an \emph{$\varepsilon^q A_p$ weight} if 
$$
    [w]_{\varepsilon^q A_p}:=\sup_{Q\in \mathcal{D}} |\varepsilon_{Q}|^q\langle w\rangle_Q \langle w^{1-p'}\rangle_Q^{p-1}<\infty.
$$
Notice that if $\{\varepsilon_Q\}_{Q \in \mathcal{D}}$ is a bounded sequence of real numbers, 
we have 
$
    [w]_{\varepsilon^q A_p}
    \leq {\tilde \varepsilon^q}[w]_{A_p},
$
where $\displaystyle \tilde \varepsilon^q:= \sup_{Q\in\mathcal{D}} |\varepsilon_Q|^q$, and thus $A_p \subseteq \varepsilon^q A_p$. Again, the averages above are taken with respect to a general Radon measure $\mu$. 
\begin{thm}
\label{Haarboundedness}
Let $T$ be a Haar multiplier adapted to a Radon measure $\mu$ and a bounded sequence of real numbers $\{\varepsilon_Q\}_{Q \in \mathcal{D}}$ and let $\displaystyle {\tilde \varepsilon_{Q}:=\sup_{Q'\in {\mathcal D}(Q)}|\varepsilon_{Q'}|}$. 
If $2\leq p<\infty$, and $w \in \tilde \varepsilon A_p$, then $T$ is bounded on $L^p(w)$ with 
$$
    \|Tf\|_{L^p(w)}\lesssim[w]_{\tilde\varepsilon A_p}\|f\|_{L^p(w)}
$$
for all $f \in L^p(w)$; if $1< p\leq 2$ and $w \in \tilde \varepsilon^{p-1}A_p$, then $T$ is bounded on $L^p(w)$ with 
$$
    \|Tf\|_{L^p(w)}\lesssim [w]_{\tilde\varepsilon^{p-1}A_p}^{\frac{p'}{p}}\|f\|_{L^p(w)}
$$
for all $f \in L^p(w)$.
\end{thm}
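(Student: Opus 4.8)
The plan is to deduce the weighted bounds directly from the sparse domination in Theorem \ref{HaarSparseBound}, so that everything reduces to a weighted estimate for the ``tilted'' sparse form $S_\varepsilon|f| = \sum_{Q\in\mathcal S}\tilde\varepsilon_Q\langle|f|\rangle_Q\mathbbm 1_Q$. Since $|Tf(x)|\lesssim S_\varepsilon|f|(x)$ on $\supp f$ and $Tf$ is itself supported in $Q_0\supseteq\supp\mu$, it suffices to prove $\|S_\varepsilon g\|_{L^p(w)}\lesssim C(p,[w])\|g\|_{L^p(w)}$ for nonnegative $g$, with the averages taken against $\mu$. The first step, for the range $2\le p<\infty$, is to test $S_\varepsilon g$ against a nonnegative $h\in L^{p'}(w)$ and expand the pairing as $\sum_{Q\in\mathcal S}\tilde\varepsilon_Q\langle g\rangle_Q\langle h\rangle_Q\, w(Q)$ (all averages and $w(Q)$ with respect to $\mu$). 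The idea is to redistribute the weight: write $\langle g\rangle_Q = \langle g\sigma^{-1}\sigma\rangle_Q$ with $\sigma = w^{1-p'}$ and $\langle h\rangle_Q = \langle hw^{-1}w\rangle_Q$, and use $\tilde\varepsilon_Q$ together with the sparseness to absorb the $A_p$-type constant. Because $\tilde\varepsilon_Q = \sup_{Q'\in\mathcal D(Q)}|\varepsilon_{Q'}|$ and the definition of $\tilde\varepsilon A_p$ controls $|\varepsilon_Q|^1\langle w\rangle_Q\langle\sigma\rangle_Q^{p-1}$, I will need the pointwise comparison $\tilde\varepsilon_Q\langle w\rangle_Q\langle\sigma\rangle_Q^{p-1}\le\sup_{Q'\subseteq Q}|\varepsilon_{Q'}|\langle w\rangle_{Q'}\langle\sigma\rangle_{Q'}^{p-1}$ only after passing to the dominating cube; more honestly, one uses a standard stopping-time/Carleson argument on the sparse family where the ``$\tilde\varepsilon$'' weight plays the role of the Carleson coefficient.

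Concretely, for $2\le p<\infty$ I would follow the classical dual-weight argument for sparse forms (as in Lerner--Nazarov or Moen): bound $\sum_{Q\in\mathcal S}\tilde\varepsilon_Q\langle g\rangle_Q\langle h\rangle_Q w(Q)$ by $[w]_{\tilde\varepsilon A_p}\sum_{Q\in\mathcal S}\big(\tfrac{1}{\sigma(Q)}\int_Q g\,d\mu\big)\big(\tfrac1{w(Q)}\int_Q h\,d\mu\big)\sigma(Q)^{1/p}w(Q)^{1/p'}\cdot(\text{geometric factors})$, then invoke the sparseness of $\mathcal S$ with respect to $\mu$ to pass to maximal-function/Carleson-embedding estimates $\sum_Q\langle g\sigma^{-1}\rangle_{Q,\sigma}^p\sigma(Q)\lesssim\|g\sigma^{-1}\|_{L^p(\sigma)}^p$ and the analogous bound for $h$. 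The exponent $q=1$ in $\tilde\varepsilon A_p$ (rather than $q=p-1$) is what makes this run for $p\ge2$: the weight only needs to be borrowed once on the ``$g$'' side, which matches the single power of $\tilde\varepsilon_Q$ available. For $1<p\le2$, duality swaps the roles of $g$ and $h$: I would instead bound $\|S_\varepsilon\|_{L^p(w)\to L^p(w)}$ by duality through $\|S_\varepsilon^*\|_{L^{p'}(\sigma)\to L^{p'}(\sigma)}$, note that $p'\ge2$, and apply the first case with $w$ and $p$ replaced by $\sigma$ and $p'$; one checks $w\in\tilde\varepsilon^{p-1}A_p \iff \sigma\in\tilde\varepsilon A_{p'}$ with $[\sigma]_{\tilde\varepsilon A_{p'}} = [w]_{\tilde\varepsilon^{p-1}A_p}^{p'-1} = [w]_{\tilde\varepsilon^{p-1}A_p}^{1/(p-1)}$, which upon raising to the power dual to the embedding yields the stated exponent $[w]_{\tilde\varepsilon^{p-1}A_p}^{p'/p}$.

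The main obstacle I anticipate is \emph{not} the linear algebra of exponents but making the Carleson-embedding step legitimate when the averages are taken with respect to a possibly non-doubling Radon measure $\mu$ and the Carleson coefficients are $\tilde\varepsilon_Q\,w(Q)$ rather than plain $w(Q)$. I would handle this by verifying a weighted Carleson condition: for every $R\in\mathcal S$, $\sum_{Q\in\mathcal S, Q\subseteq R}\tilde\varepsilon_Q\,w(Q)\lesssim[w]_{\tilde\varepsilon A_p}^{?}\,\sigma(R)^{?}w(R)^{?}$ — this is where sparseness of $\mathcal S$ (each $Q$ contains a substantial ``new'' portion $E_Q$ with $\mu(E_Q)\gtrsim\mu(Q)$) combines with the $\tilde\varepsilon A_p$ condition, using $\tilde\varepsilon_Q|\varepsilon|$-monotonicity to telescope. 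Once the weighted Carleson embedding is in place, the rest is the standard sparse-form bookkeeping, and I would also need to remark that the implicit constants from Theorem \ref{HaarSparseBound} and the support restriction to $Q_0$ cause no loss since $\mu$ lives on $Q_0$.
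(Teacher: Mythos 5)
Your overall strategy matches the paper's: pass to compactly supported $\mu$, invoke Theorem~\ref{HaarSparseBound}, bound the resulting sparse form by the Moen-type dual-weight argument (redistribute $w$ and $\sigma=w^{1-p'}$, use sparseness to get disjoint ``substantial'' pieces, and finish with the $L^p(\sigma)$-boundedness of $M_\sigma$ and the $L^{p'}(w)$-boundedness of $M_w$), and then obtain the range $1<p\le 2$ by duality from $p'\ge 2$ via $w\in\tilde\varepsilon^{p-1}A_p\iff\sigma\in\tilde\varepsilon A_{p'}$ and $[\sigma]_{\tilde\varepsilon A_{p'}}=[w]_{\tilde\varepsilon^{p-1}A_p}^{p'/p}$. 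The exponent arithmetic you carry out ($p'-1 = 1/(p-1) = p'/p$) is correct and agrees with the paper. One small note on the duality step: the paper applies duality to $T$ itself (which is symmetric, $T^*=T$) rather than to $S_\varepsilon$; this avoids the awkwardness that $S_\varepsilon$ is built from $|f|$ and depends on $f$, but the substance is the same.

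The one place you are inventing difficulty is the ``stopping-time/Carleson'' passage and the claimed pointwise comparison $\tilde\varepsilon_Q\langle w\rangle_Q\langle\sigma\rangle_Q^{p-1}\le\sup_{Q'\subseteq Q}|\varepsilon_{Q'}|\langle w\rangle_{Q'}\langle\sigma\rangle_{Q'}^{p-1}$ (which is in fact false in general). You appear to be reading the $\tilde\varepsilon A_p$ condition as a condition on the raw coefficients $|\varepsilon_Q|$, so that one would have to telescope through $\tilde\varepsilon_Q=\sup_{Q'\in\mathcal D(Q)}|\varepsilon_{Q'}|$. But the hypothesis $w\in\tilde\varepsilon A_p$ is by definition $\sup_Q \tilde\varepsilon_Q\langle w\rangle_Q\langle\sigma\rangle_Q^{p-1}<\infty$; it is stated directly for the modified sequence $\{\tilde\varepsilon_Q\}$. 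So the constant $\tilde\varepsilon_{Q_j^k}\langle w\rangle_{Q_j^k}\langle\sigma\rangle_{Q_j^k}^{p-1}\le[w]_{\tilde\varepsilon A_p}$ is available for each sparse cube with no telescoping or weighted Carleson embedding required. The proof then closes exactly as in Moen's argument: insert $\mu(Q_j^k)\le 2\mu(E_j^k)$ for the pairwise disjoint sets $E_j^k\subseteq Q_j^k$, note $\mu(E_j^k)^{p-1}\sigma(E_j^k)^{2-p}\le w(E_j^k)^{1/p'}\sigma(E_j^k)^{1/p}$ by H\"older and the identity $\frac{p-1}{p'}+2-p=\frac1p$, apply discrete H\"older, and use Lemma~\ref{WeightedMaxBoundedness}. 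With that misconception removed, your proposal is a correct sketch of the paper's proof.
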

\noindent We remark that Theorem \ref{Haarboundedness} cannot be obtained by existing extrapolation methods since it holds for weights beyond the $A_p$ classes. 

Moreover, if the coefficients $\varepsilon_Q$ possess extra decay, then we can deduce compactness of the associated Haar multiplier. We use Theorem \ref{HaarSparseBound} to obtain the following. 
\begin{thm}
\label{HaarWeightedCompactness}
Let $T$ be a Haar multiplier adapted to a Radon measure $\mu$ and a bounded sequence of real numbers $\{\varepsilon_Q\}_{Q \in \mathcal{D}}$
such that 
$$\lim_{\ell(Q)\rightarrow \infty}|\varepsilon_{Q}|=\lim_{\ell(Q)\rightarrow 0}|\varepsilon_{Q}|=\lim_{c(Q)\rightarrow \infty }|\varepsilon_{Q}|=0,$$ 
where $\ell(Q)$ and $c(Q)$ denote the side length and center of $Q$ respectively. If $1<p<\infty$ and $w \in A_p$, then $T$ is compact on $L^p(w)$.
\end{thm}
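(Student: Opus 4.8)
The plan is to exhibit $T$ as the operator-norm limit on $L^p(w)$ of finite-rank truncations obtained by deleting the Haar coefficients attached to cubes outside a large \emph{finite} family, with the tails controlled by the sparse bound of Theorem \ref{HaarSparseBound}. Throughout we work in the setting of Theorem \ref{HaarSparseBound}, assuming $\mu$ is supported in a dyadic cube. Fix $1<p<\infty$ and $w\in A_p$. Since $\{\varepsilon_Q\}$ is bounded we have $A_p\subseteq\tilde\varepsilon A_p$ and $A_p\subseteq\tilde\varepsilon^{\,p-1}A_p$, so Theorem \ref{Haarboundedness} already gives that $T$ is bounded on $L^p(w)$ (this is also immediate from Theorem \ref{HaarSparseBound} together with the standard weighted bound for sparse operators), so that ``$T$ is compact'' is a meaningful assertion.

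Fix $\eta>0$. By the three decay hypotheses there is $N=N(\eta)$ such that $|\varepsilon_Q|<\eta$ whenever $\ell(Q)>2^N$, or $\ell(Q)<2^{-N}$, or $|c(Q)|>N$. Let $\mathcal{G}=\mathcal{G}_\eta$ be the collection of dyadic cubes $Q$ with $2^{-N}\le\ell(Q)\le 2^N$ and $|c(Q)|\le N$. Then $\mathcal{G}$ is a finite family, and every $Q\notin\mathcal{G}$ satisfies $|\varepsilon_Q|<\eta$. Set $T_{\mathcal{G}}f:=\sum_{Q\in\mathcal{G}}\varepsilon_Q\langle f,h_Q\rangle h_Q$. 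Each summand is a bounded rank-one operator on $L^p(w)$, because $h_Q$ is bounded with compact support while $w$ and $w^{1-p'}$ are locally integrable; hence $T_{\mathcal{G}}$ is finite-rank, in particular compact, on $L^p(w)$.

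It remains to estimate $\|T-T_{\mathcal{G}}\|_{L^p(w)\to L^p(w)}$. The operator $T-T_{\mathcal{G}}$ is the Haar multiplier with coefficients $\varepsilon'_Q$ equal to $\varepsilon_Q$ for $Q\notin\mathcal{G}$ and to $0$ for $Q\in\mathcal{G}$, and for every dyadic $Q$ the averaged coefficient from Theorem \ref{HaarSparseBound} obeys $\sup_{Q'\in\mathcal{D}(Q)}|\varepsilon'_{Q'}|\le\eta$: each $Q'\in\mathcal{D}(Q)$ either lies in $\mathcal{G}$, where $\varepsilon'_{Q'}=0$, or lies outside $\mathcal{G}$, where $|\varepsilon'_{Q'}|=|\varepsilon_{Q'}|<\eta$. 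Applying Theorem \ref{HaarSparseBound} to $T-T_{\mathcal{G}}$ therefore produces, for each bounded compactly supported $f$, a sparse family $\mathcal{S}$ with
$$
|(T-T_{\mathcal{G}})f(x)|\lesssim\sum_{Q\in\mathcal{S}}\Big(\sup_{Q'\in\mathcal{D}(Q)}|\varepsilon'_{Q'}|\Big)\langle|f|\rangle_Q\mathbbm{1}_Q(x)\le\eta\sum_{Q\in\mathcal{S}}\langle|f|\rangle_Q\mathbbm{1}_Q(x)
$$
for almost every $x\in\supp f$. Since sparse operators are bounded on $L^p(w)$ with a constant depending only on $n$, $p$, and $[w]_{A_p}$ — in particular not on $\mathcal{S}$ nor on $\eta$ — this yields $\|(T-T_{\mathcal{G}})f\|_{L^p(w)}\lesssim\eta\|f\|_{L^p(w)}$ on this dense class, hence $\|T-T_{\mathcal{G}}\|_{L^p(w)\to L^p(w)}\lesssim\eta$ with implied constant independent of $\eta$. (Equivalently, one applies Theorem \ref{Haarboundedness} directly to $T-T_{\mathcal{G}}$: all of its averaged coefficients being $\le\eta$, its relevant Muckenhoupt constant is at most $\eta[w]_{A_p}$, so the same bound follows.) Letting $\eta\to0$ presents $T$ as the operator-norm limit of the compact operators $T_{\mathcal{G}_\eta}$, so $T$ is compact on $L^p(w)$.

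The argument is essentially the assembly of Theorem \ref{HaarSparseBound} (or Theorem \ref{Haarboundedness}) with a truncation; the one place the hypotheses do genuine work is in forcing $\mathcal{G}$ to be finite \emph{and} keeping the averaged tail coefficients uniformly below $\eta$. Note in particular that $\lim_{\ell(Q)\to0}|\varepsilon_Q|=0$ is indispensable here: it is needed not only to bound $\ell(Q)$ from below in the definition of $\mathcal{G}$, but also because a large cube $Q\notin\mathcal{G}$ still possesses arbitrarily small subcubes $Q'\in\mathcal{D}(Q)$ contributing to $\sup_{Q'}|\varepsilon'_{Q'}|$, and only this limit controls those terms — the three conditions must be used together. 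The remaining points, namely the compactness of the finite truncation $T_{\mathcal{G}}$ on $L^p(w)$ and the passage of the sparse estimate from bounded compactly supported functions to all of $L^p(w)$, are routine.
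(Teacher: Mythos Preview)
Your proposal is correct and follows essentially the same strategy as the paper, which deduces Theorem~\ref{HaarWeightedCompactness} immediately from Theorem~\ref{HaarSparseBound} together with Theorem~\ref{SparseImpliesWeightedCompactness}. The only organizational difference is that you truncate $T$ itself to the finite-rank $T_{\mathcal G}$ and then apply the sparse bound to the remainder $T-T_{\mathcal G}$, whereas the paper first applies the sparse bound to $T$ and then truncates the resulting sparse operator to the lagom cubes; your ordering is arguably cleaner since $T_{\mathcal G}$ is a fixed finite-rank linear operator independent of $f$, while the sparse family $\mathcal S$ in the paper's route depends on $f$.
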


The paper is organized as follows. We prove the sparse bound (Theorem \ref{HaarSparseBound}) and its applications to weighted boundedness (Theorem \ref{Haarboundedness}) and compactness (Theorem \ref{HaarWeightedCompactness}) for Haar multipliers in Section 2. We prove the sparse bound (Theorem \ref{domtilde}) and weighted compactness result (Theorem \ref{CZOWeightedCompactness}) for Calder\'on-Zygmund operators in Section 3.

The authors thank Jos\'e Conde-Alonso for valuable conversations regarding this work. We also thank David Cruz-Uribe for pointing out an issue with Theorem \ref{Haarboundedness} in a previous version of this article, leading to a slightly different result, and thank Timo S. H\"anninen and Emiel Lorist for identifying an issue with Theorem \ref{SparseImpliesWeightedCompactness} in a previous version of this article. We finally thank the anonymous referee for their careful review and feedback.

\section{Haar multipliers}
\label{HaarMultipliers}

\subsection{Definitions and notation}
\label{HaarDef}

Let $\mu$ be a Radon measure on $\mathbb{R}^n$. Throughout this section, all of our integrals, averages, inner products, etcetera will be taken with respect to $\mu$. This will change in Section \ref{CZOs} where we will instead work with Lebesgue measure. 

Let $\mathcal{D}$ denote the standard dyadic grid on $\mathbb{R}^n$, that is, the family of cubes of the form
$Q=\prod_{i=1}^n[2^km_i,2^k(m_i+1))$ for $k, m_i\in \mathbb Z $. 
The expression $\widehat{Q}$ denotes the parent of $Q$, namely, 
the unique dyadic cube such that $\ell(\widehat{Q})=2\ell(Q)$ and 
$Q\subseteq \widehat{Q}$. 
We denote by 
$\text{ch}(Q)$ the children of $Q$, that is, the set of cubes $R\in \mathcal{D}$ such that $\ell(R)=\ell(Q)/2$ and 
$R\subseteq Q$. 
Throughout the paper, all cubes are defined by the tensor product of intervals, and thus their sides are always parallel to the coordinates axes. For $\lambda >0$ and any cube $Q$, we write $\lambda Q$ for the unique cube that satisfies $c(\lambda Q)=c(Q)$
and $\ell(\lambda Q)=\lambda \ell(Q)$. Given a measurable set $\Omega \subseteq \mathbb R^{n}$, we denote by $\mathcal{D}(\Omega)$ the family of dyadic cubes $Q$ such that $Q\subsetneq\Omega$. If $\Omega $ is a dyadic cube, this inclusion is equivalent to $\widehat{Q}\subseteq \Omega$. 

For $Q \in \mathcal{D}$ such that $\mu(Q)>0$, define the \emph{Haar function adapted to $Q$} by
$$
    h_{Q}:=\mu(Q)^{-\frac{1}{2}}\left(\mathbbm{1}_{Q}-\frac{\mu(Q)}{\mu(\widehat{Q})}\mathbbm{1}_{\widehat{Q}}\right).
$$
We note that this notation for $h_Q$ is not standard, but it is convenient for our purposes. Using this notation, $h_{Q}$ is supported on $\widehat{Q}$ and constant on $Q$ and on $\widehat{Q} \setminus Q$.
As shown in \cite{V2019},
we have
$$
f=\sum_{Q\in \mathcal{D}}\langle f, h_Q\rangle h_Q
$$
with convergence in $L^2(\mu )$ norm  
for $f\in L^2(\mu )$ with mean zero, where we write $\langle f,g \rangle :=\int_{\mathbb R^n}fg\,d\mu$. 
\begin{rem}
\label{HaarFrame}
Since 
\begin{equation}\label{disjointnessofHaar}
    \langle h_Q,h_R\rangle
=\delta (\widehat{Q},\widehat{R})\Bigg(\delta (Q,R)
-\frac{\mu(Q)^{\frac{1}{2}}\mu(R)^{\frac{1}{2}}}{\mu(\widehat{Q})}\Bigg),
\end{equation}
where $\delta (Q,R)=1$ if $Q=R$ and zero otherwise, $\{h_Q\}_{Q\in \mathcal{D}}$ is not an orthogonal system. However, $\{h_Q\}_{Q\in\mathcal{D}}$ is a frame for $L^2(\mu)$, namely, there exist $0<C_1\leq C_2$ such that 
$$
C_1\|f\|_{L^2(\mu)}
\leq \Big(\sum_{Q\in {\mathcal D}}\langle f, h_Q\rangle^2\Big)^{\frac{1}{2}}
\leq C_2\|f\|_{L^2(\mu)},
$$
for all $f\in L^2(\mu)$ with mean zero, which is enough to prove our results. The two inequalities above follow directly from \eqref{disjointnessofHaar}.
\end{rem}

Recall that for a bounded sequence of real numbers indexed by dyadic cubes $\{\varepsilon_Q\}_{Q\in \mathcal{D}}$, the associated \emph{Haar multiplier}, $T$, is given by 
$$
    Tf=\sum_{Q\in\mathcal{D}}\varepsilon_Q\langle f,h_Q\rangle h_Q.
$$
The previous equality is understood with almost everywhere pointwise convergence, meaning 
$$
    Tf=\lim_{M\rightarrow \infty} \sum_{Q\in\mathcal{\tilde D}(\mathbb B_{M})}\varepsilon_Q\langle f,h_Q\rangle h_Q,
$$
where $\mathbb B_{M}$ is the ball centered at the origin with diameter $M$ and $\mathcal{\tilde D}(\mathbb B_{M})$ is the finite family of dyadic cubes $Q$ such that both $Q\subsetneq \mathbb B_{M}$ and $\ell(Q)> M^{-1}$.

%

Writing $\langle f \rangle_Q := \frac{1}{\mu(Q)} \int_Q f\,d\mu$, we note that 
\begin{align}\label{product}
\langle f,h_{Q}\rangle h_{Q}&=\left(\langle f\rangle_{Q}-\langle f\rangle_{\widehat{Q}}\right)\left(\mathbbm{1}_{Q}-\frac{\mu(Q)}{\mu(\widehat{Q})}\mathbbm{1}_{\widehat{Q}}\right)
=\langle f\rangle_{Q}\mathbbm{1}_{Q}+a_{Q},
\end{align}
where $a_{Q}:=-\frac{\mu(Q)}{\mu(\widehat{Q})}\langle f\rangle_{Q}\mathbbm{1}_{\widehat{Q}}-\langle f\rangle_{\widehat{Q}}\mathbbm{1}_{Q}+\frac{\mu(Q)}{\mu(\widehat{Q})}\langle f\rangle_{\widehat{Q}}\mathbbm{1}_{\widehat{Q}}$ satisfies the bound 
$$
|a_{Q}|\leq \bigg(\frac{1}{\mu(\widehat{Q})}\int_{Q} |f|\,d\mu+2\langle |f|\rangle_{\widehat{Q}}\bigg)\mathbbm{1}_{\widehat{Q}}\leq 3\langle |f|\rangle_{\widehat{Q}}\mathbbm{1}_{\widehat{Q}}.
$$

\subsection{Technical results}
We use the following \emph{auxiliary maximal function} in the proof of Theorem \ref{HaarSparseBound}.
Let $\{\varepsilon_Q\}_{Q \in \mathcal{D}}$ be a bounded sequence of real numbers indexed by dyadic cubes and define $M_{\varepsilon}$ by $$M_{\varepsilon}f(x):=\sup_{\substack{Q\in {\mathcal D} \\ Q\ni x }}\max_{Q'\in \child(Q)}|\varepsilon_{Q'}| \langle |f|\rangle_Q.
$$
Since trivially
$$
    M_{\varepsilon}f(x)\leq \sup_{\substack{Q\in {\mathcal D}}} 
\max_{Q'\in \child(Q)}|\varepsilon_{Q'}|Mf(x), 
$$
where $M$ is the dyadic Hardy-Littlewood maximal operator defined by $$Mf(x):=\sup_{\substack{Q \in \mathcal{D}\\Q \ni x}}\langle |f|\rangle_Q,$$ we have the following property.
\begin{lemma}
\label{MaxWeakType}
If $\mu$ is a Radon measure supported in a dyadic cube $Q_0$ and $\{\varepsilon_Q\}_{Q \in \mathcal{D}}$ is a bounded sequence of real numbers, then
$$
\| M_{\varepsilon}f\|_{L^{1,\infty}(\mu)}:=
\sup_{\lambda>0}\lambda \, \mu(\{x\in \mathbb R^n : M_{\varepsilon}f(x) > \lambda\})
\lesssim 
\sup_{\substack{Q\in {\mathcal D(Q_0)} 
}}\max_{Q'\in \child(Q)}|\varepsilon_{Q'}|
\| f\|_{L^1(\mu)}
$$
for all $f\in  L^1(\mu)$.
\end{lemma}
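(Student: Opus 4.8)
The plan is to obtain the weak-type $(1,1)$ bound for $M_\varepsilon$ from the pointwise domination already recorded in the excerpt together with the classical weak-type $(1,1)$ estimate for the dyadic Hardy--Littlewood maximal operator $M$, taking care that the constant which survives is the supremum over $\mathcal D(Q_0)$ rather than over all of $\mathcal D$. Recall first that for \emph{any} Radon measure $\mu$ one has $\mu(\{Mf>\lambda\})\le\lambda^{-1}\|f\|_{L^1(\mu)}$: the level set $\{Mf>\lambda\}$ is the union of the maximal dyadic cubes $Q$ with $\langle|f|\rangle_Q>\lambda$, these are pairwise disjoint because dyadic cubes are nested, and $\sum_Q\mu(Q)\le\lambda^{-1}\sum_Q\int_Q|f|\,d\mu\le\lambda^{-1}\|f\|_{L^1(\mu)}$, so the constant is in fact $1$. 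When $\mu$ is supported in $Q_0$ one observes in addition that all dyadic cubes strictly containing $Q_0$ have the same average $\langle|f|\rangle_{Q_0}$, so either none or all of them lie in the level set; in the latter case the set has $\mu$-measure $\mu(Q_0)$ and the estimate still holds.

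Next I would extract how the support hypothesis improves the constant. Two facts are used: (i) since $\mu$ is supported in $Q_0$ and dyadic cubes are nested, every dyadic cube $Q$ with $\mu(Q)>0$ satisfies $Q\subseteq Q_0$ or $Q\supsetneq Q_0$, and in the second case $\mu(Q)=\mu(Q\cap Q_0)=\mu(Q_0)$, whence $\langle|f|\rangle_Q=\langle|f|\rangle_{Q_0}$; (ii) the Haar function $h_{Q'}$ is supported on $\widehat{Q'}$ and therefore vanishes $\mu$-a.e.\ unless $\widehat{Q'}\subseteq Q_0$, i.e.\ unless $Q'\in\mathcal D(Q_0)$. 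By (ii) the value of the Haar multiplier $T$ is unchanged if we set $\varepsilon_{Q'}=0$ for all $Q'\notin\mathcal D(Q_0)$, and since $M_\varepsilon$ enters the argument only through its control of $|Tf|$ we may assume this from the outset. After this reduction, the only cubes $Q$ that contribute a nonzero term $\max_{Q'\in\child(Q)}|\varepsilon_{Q'}|\langle|f|\rangle_Q$ to $M_\varepsilon f$ are those having a child in $\mathcal D(Q_0)$, which forces $Q\subseteq Q_0$ and $\child(Q)\subseteq\mathcal D(Q_0)$; for such $Q$ one gets $\max_{Q'\in\child(Q)}|\varepsilon_{Q'}|\le\Lambda:=\sup_{R\in\mathcal D(Q_0)}\max_{R'\in\child(R)}|\varepsilon_{R'}|$. (The lone borderline cube $Q=Q_0$ is harmless: the hypothesis holds equally with $Q_0$ replaced by any larger dyadic cube, and enlarging $Q_0$ by one generation places $\child(Q_0)$ inside $\mathcal D(Q_0)$ without essentially changing $\Lambda$.) Hence $M_\varepsilon f(x)\le\Lambda\,Mf(x)$ for $\mu$-a.e.\ $x$, and combining with the weak-type $(1,1)$ bound for $M$ gives $\lambda\,\mu(\{M_\varepsilon f>\lambda\})\le\lambda\,\mu(\{Mf>\lambda/\Lambda\})\le\Lambda\|f\|_{L^1(\mu)}$, which is the assertion.

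The only genuine point in the argument is the second step — securing the $\mathcal D(Q_0)$-supremum instead of the supremum over all dyadic cubes — and this is precisely where the support of $\mu$, through the vanishing of the ``large'' Haar functions, is used. Everything else is the textbook stopping-cube proof of the weak-type $(1,1)$ inequality for the dyadic maximal function, which carries over verbatim to an arbitrary Radon measure because distinct dyadic cubes are either nested or disjoint.
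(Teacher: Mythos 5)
Your overall strategy (pointwise domination of $M_\varepsilon$ by a constant times the dyadic maximal operator $M$, plus the weak-$(1,1)$ estimate for $M$ with respect to an arbitrary Radon measure) is exactly what the paper has in mind; the paper essentially states this as trivial after giving the global pointwise bound $M_\varepsilon f \le (\sup_{Q\in\mathcal D}\max_{Q'\in\child(Q)}|\varepsilon_{Q'}|)Mf$. However, the step where you pass from the global supremum to the localized constant $\Lambda$ has a genuine gap, and it occurs precisely at the borderline cube $Q=Q_0$. Your reduction sets $\varepsilon_{Q'}=0$ for $Q'\notin\mathcal D(Q_0)$, but $\child(Q_0)\subseteq\mathcal D(Q_0)$, so those coefficients survive the reduction; yet they are \emph{not} captured by $\Lambda=\sup_{R\in\mathcal D(Q_0)}\max_{R'\in\child(R)}|\varepsilon_{R'}|$, because the only $R$ with a child in $\child(Q_0)$ is $R=Q_0$, which is excluded from $\mathcal D(Q_0)$. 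Concretely, take $\mu=\delta_{x_0}$ with $x_0$ interior to $Q_0$, $f\equiv 1$, $\varepsilon_{Q'}=1$ for $Q'\in\child(Q_0)$ and $\varepsilon_{Q'}=0$ otherwise: then $\Lambda=0$ while $M_\varepsilon f(x_0)\ge\max_{Q'\in\child(Q_0)}|\varepsilon_{Q'}|\langle|f|\rangle_{Q_0}=1$, so $\|M_\varepsilon f\|_{L^{1,\infty}(\mu)}\gtrsim 1$ but $\Lambda\|f\|_{L^1(\mu)}=0$. Your parenthetical remark does not repair this: replacing $Q_0$ by $\widehat{Q_0}$ replaces $\Lambda$ by the \emph{larger} quantity $\sup_{R\in\mathcal D(\widehat{Q_0})}\max_{R'\in\child(R)}|\varepsilon_{R'}|$, so you would only have proved the bound with a weaker constant, not the stated one.

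There is a second, more structural issue with the reduction itself: you justify setting $\varepsilon_{Q'}=0$ for $Q'\notin\mathcal D(Q_0)$ by appealing to the invariance of the Haar multiplier $T$ under this change. But the lemma is a self-contained statement about $M_\varepsilon$ alone, and $M_\varepsilon$ manifestly \emph{does} change when the coefficients change, so the argument ``$M_\varepsilon$ only enters through its control of $|Tf|$'' is external to the lemma and cannot be used to prove it. (In fact the stated lemma, with the displayed constant and the unrestricted supremum in the definition of $M_\varepsilon$, is false as the example above shows; the version actually used later in the paper has the constant $\tilde\varepsilon_{Q_0}=\max_{Q\in\mathcal D(Q_0)}|\varepsilon_Q|$, which does include $\child(Q_0)$, and tacitly only invokes $M_\varepsilon f(y)\ge\max_{I\in\child(\widehat P)}|\varepsilon_I|\langle|f|\rangle_{\widehat P}$ for $\widehat P\subseteq Q_0$, i.e.\ effectively uses the maximal operator restricted to cubes inside $Q_0$.) So what you have correctly proved is a slightly modified statement: the weak-type bound for the restricted maximal operator $\sup_{Q\ni x,\,Q\subseteq Q_0}\max_{Q'\in\child(Q)}|\varepsilon_{Q'}|\langle|f|\rangle_Q$ with constant $\tilde\varepsilon_{Q_0}$. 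It would be cleaner to state and prove that version directly rather than present it as a proof of the lemma as written.
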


We will also use the following \emph{auxiliary maximal truncation Haar multiplier} in the proof of Theorem \ref{HaarSparseBound}. Let $\{\varepsilon_Q\}_{Q \in \mathcal{D}}$ be a bounded sequence of real numbers indexed by dyadic cubes and define $T^{\max}$ by
$$
    T^{\max}f:=\sup_{Q\in\mathcal{D}}\Bigg|\sum_{\substack{P \in \mathcal{D}\\ \widehat{P}\supsetneq Q}} \varepsilon_P\langle f, h_P\rangle h_P\Bigg|.
$$
\begin{lemma}
\label{HaarTruncationWeakType}
If $\mu $ is a Radon measure supported in a dyadic cube $Q_{0}$,  $\{\varepsilon_Q\}_{Q \in \mathcal{D}}$ is a bounded sequence of real numbers, and $T^{\max}$ is defined as above, then
$$
    \|T^{\max}f\|_{L^{1,\infty}(\mu)}\lesssim \sup_{\substack{Q\in {\mathcal D}(Q_0) 
    }}|\varepsilon_{Q}| \|f\|_{L^1(\mu)}
$$ for all $f \in L^1(\mu)$.
\end{lemma}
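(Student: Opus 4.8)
The plan is to combine the $L^2(\mu)$ boundedness of $T^{\max}$ with a Calder\'on--Zygmund decomposition of $f$ adapted to $\mu$, taking care that $\mu$ need not be doubling. Two preliminary reductions: first, the asserted inequality is positively homogeneous in the sequence $\{\varepsilon_Q\}$, so we may normalize $\varepsilon^{\ast}:=\sup_{Q\in\mathcal D(Q_0)}|\varepsilon_Q|=1$ and prove $\|T^{\max}f\|_{L^{1,\infty}(\mu)}\lesssim\|f\|_{L^1(\mu)}$ (if $\varepsilon^{\ast}=0$ there is nothing to prove). Second, since $\mu$ is supported in $Q_0$, one checks that $h_P$ vanishes $\mu$-a.e.\ and pairs trivially with $L^1(\mu)$ functions whenever $\widehat P\not\subseteq Q_0$; hence only the cubes $P\in\mathcal D(Q_0)$ contribute to $S_Qf:=\sum_{\widehat P\supsetneq Q}\varepsilon_P\langle f,h_P\rangle h_P$ and to $T^{\max}f=\sup_Q|S_Qf|$, so $T^{\max}f$ is supported in $Q_0$ and the supremum may be restricted to $Q\subsetneq Q_0$. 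As $\mu(Q_0)<\infty$, the case $\lambda<\langle|f|\rangle_{Q_0}$ follows at once from $\mu(\{T^{\max}f>\lambda\})\le\mu(Q_0)\le\lambda^{-1}\|f\|_{L^1(\mu)}$; so we fix $\lambda\ge\langle|f|\rangle_{Q_0}$.

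I would first record that $T^{\max}$ is bounded on $L^2(\mu)$ with norm $\lesssim\varepsilon^{\ast}$. Writing, for $x$ with dyadic ancestors $(Q_m(x))_m$, $d_m(x):=\sum_{P\in\child(Q_m(x))}\varepsilon_P\langle f,h_P\rangle h_P(x)$, one sees from $\int h_P\,d\mu=0$ that $(d_m)_m$ is a martingale difference sequence for the dyadic filtration, that $S_{Q_k(x)}f(x)=\sum_{m>k}d_m(x)$ and hence $T^{\max}f(x)=\sup_k\big|\sum_{m>k}d_m(x)\big|$, and that $\big(\sum_{m>k}d_m\big)_k$ is again a martingale. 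Doob's $L^2$ maximal inequality then yields $\|T^{\max}f\|_{L^2(\mu)}\lesssim\|Tf\|_{L^2(\mu)}$, while $\|d_m\|_{L^2(\mu)}^2\lesssim\sum_{\ell(P)=2^{m-1}}|\langle f,h_P\rangle|^2$ and the frame bound of Remark \ref{HaarFrame} give $\|Tf\|_{L^2(\mu)}^2=\sum_m\|d_m\|_{L^2(\mu)}^2\lesssim\|f\|_{L^2(\mu)}^2$.

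Next, perform a Calder\'on--Zygmund decomposition of $f$ at height $\lambda$: let $\{Q_j\}$ be the maximal dyadic cubes with $\langle|f|\rangle_{Q_j}>\lambda$ (so $Q_j\subsetneq Q_0$, $\langle|f|\rangle_{\widehat{Q_j}}\le\lambda$, and $\sum_j\mu(Q_j)\le\lambda^{-1}\|f\|_{L^1(\mu)}$), put $\Omega:=\bigcup_jQ_j$, and write $f=g+b$ with $b=\sum_jb_j$, $b_j:=(f-\langle f\rangle_{Q_j})\mathbbm{1}_{Q_j}$, $g:=f\mathbbm{1}_{Q_0\setminus\Omega}+\sum_j\langle f\rangle_{Q_j}\mathbbm{1}_{Q_j}$. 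The bad part is immediate: since $\int_{Q_j}b_j\,d\mu=0$ and $h_P$ is constant on $Q_j$ whenever $\widehat P\supsetneq Q_j$ (and supported on $\widehat P$ in any case), one gets $\langle b_j,h_P\rangle=0$ unless $\widehat P\subseteq Q_j$; therefore $T^{\max}b_j$ is supported in $Q_j$, the supports are pairwise disjoint, $T^{\max}b\le\sum_jT^{\max}b_j$ is supported in $\Omega$, and $\mu(\{T^{\max}b>\lambda/2\})\le\mu(\Omega)\lesssim\lambda^{-1}\|f\|_{L^1(\mu)}$.

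It remains to prove $\mu(\{T^{\max}g>\lambda/2\})\lesssim\lambda^{-1}\|f\|_{L^1(\mu)}$, and this is the crux. The obstacle is that for a non-doubling $\mu$ the function $g$ need not be bounded by a multiple of $\lambda$ — the stopping averages only satisfy $|\langle f\rangle_{Q_j}|\le\tfrac{\mu(\widehat{Q_j})}{\mu(Q_j)}\lambda$ — so the standard estimate $\|g\|_{L^2(\mu)}^2\lesssim\lambda\|f\|_{L^1(\mu)}$ is unavailable and the $L^2$ bound of $T^{\max}$ cannot be used directly. Instead I would split $g=f\mathbbm{1}_{Q_0\setminus\Omega}+\sum_j\langle f\rangle_{Q_j}\mathbbm{1}_{Q_j}$: the first piece does satisfy $\|f\mathbbm{1}_{Q_0\setminus\Omega}\|_{L^2(\mu)}^2\le\lambda\|f\|_{L^1(\mu)}$ (as $|f|\le\lambda$ $\mu$-a.e.\ off $\Omega$ by martingale convergence), so the $L^2$ bound above and Chebyshev dispose of it. For the second piece one uses that $\langle\mathbbm{1}_{Q_j},h_P\rangle=0$ whenever $\widehat P\subseteq Q_j$, the telescoping inherent to the Haar frame, the disjointness of the $Q_j$, and the key bound $|\langle f\rangle_{Q_j}|\,\mu(Q_j)=\big|\int_{Q_j}f\,d\mu\big|\le\lambda\,\mu(\widehat{Q_j})$, to run a genuinely weak-type (Burkholder--Gundy type) argument rather than an $L^1$ or $L^2$ one. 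Summing the contributions of $\Omega$, of $b$, and of the two pieces of $g$ gives the claim; I expect the weak-type estimate for $\sum_j\langle f\rangle_{Q_j}\mathbbm{1}_{Q_j}$ to be the main difficulty, as this is precisely where a maximal truncation behaves worse than the operator itself.
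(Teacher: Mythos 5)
You correctly isolate the crux — for non-doubling $\mu$ the usual good function fails the bound $\|g\|_{L^2(\mu)}^2\lesssim\lambda\|f\|_{L^1(\mu)}$, since the stopping averages $\langle f\rangle_{Q_j}$ are only bounded by $\tfrac{\mu(\widehat Q_j)}{\mu(Q_j)}\lambda$ — but the proof then stops precisely where the work begins: you say you would need a ``genuinely weak-type (Burkholder--Gundy type) argument'' for $\sum_j\langle f\rangle_{Q_j}\mathbbm{1}_{Q_j}$ and concede this is the main difficulty, without supplying it. That is a genuine gap, not a detail to be filled in; the contribution of $g$ off $\Omega$ is the only piece you actually close.

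The resolution the paper uses (Lemma~\ref{CZDecomposition}, following L\'opez-S\'anchez--Martell--Parcet and Conde-Alonso--Parcet) is to change the decomposition rather than to change the estimate: take
$b_j:=f\mathbbm{1}_{Q_j}-\langle f\mathbbm{1}_{Q_j}\rangle_{\widehat{Q}_j}\mathbbm{1}_{\widehat{Q}_j}$, supported on $\widehat{Q}_j$ and with mean zero there, and correspondingly $g=f\mathbbm{1}_{\mathbb{R}^n\setminus\Omega}+\sum_j\langle f\mathbbm{1}_{Q_j}\rangle_{\widehat{Q}_j}\mathbbm{1}_{\widehat{Q}_j}$. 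The crucial point is that $\langle f\mathbbm{1}_{Q_j}\rangle_{\widehat{Q}_j}\le\langle f\rangle_{\widehat{Q}_j}\le\lambda$ \emph{is} controlled by the stopping condition at the parent, so the $L^2$ bound on $g$ does hold (modulo a short overlap computation for the $\widehat{Q}_j$, which the paper carries out). You cannot simply keep the standard $b_j=(f-\langle f\rangle_{Q_j})\mathbbm{1}_{Q_j}$; the ``key bound'' $|\langle f\rangle_{Q_j}|\mu(Q_j)\le\lambda\mu(\widehat{Q}_j)$ you invoke is exactly the motivation for redistributing the mass onto $\widehat{Q}_j$, and without doing so the good-part estimate does not close. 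Note also that this change has a cost on the bad side: since the $\widehat{Q}_j$ overlap, your clean claim ``$T^{\max}b_j$ is supported in $Q_j$'' no longer holds, and the paper instead shows $\langle b_j,h_P\rangle$ can be nonzero (for $x\notin\bigcup_jQ_j$) only when $\widehat P=\widehat{Q}_j$, then sums.

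Two smaller remarks. First, your route to $L^2$ control of $T^{\max}$ via Doob is fine in spirit, but it is simpler to use the paper's pointwise observation $T^{\max}f\le M(Tf)$ (which follows from $\int_K h_P\,d\mu=0$ when $\widehat P\subseteq K$ or $\widehat P\cap K=\emptyset$), and then invoke $L^2$ boundedness of the dyadic maximal function and of $T$; this also avoids having to justify a Doob inequality for the tail-sum process. Second, the reduction to $Q\in\mathcal D(Q_0)$ and the case $\lambda<\langle|f|\rangle_{Q_0}$ are handled correctly and match the paper.
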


We will use the following Calder\'on-Zygmund decomposition in the proof of Lemma \ref{HaarTruncationWeakType}. This decomposition is described in \cite{CAP2019}*{Theorem 4.2} and is related to the decomposition of \cite{LSMP2014}*{Theorem 2.1}.

\begin{lemma}
\label{CZDecomposition}
Let $\mu$ be a Radon measure. If $f \in L^1(\mu)$ is nonnegative and $\lambda>0$ (or $\lambda >\frac{\|f\|_{L^1(\mu)}}{\|\mu\|}$ if $\mu$ is a finite measure), then we can write 
$$
    f
    =g+\sum_{j=1}^{\infty}b_j,
$$
where 
\begin{enumerate}
    \item\label{goodfunction} $\|g\|_{L^{2}(\mu)}^2\lesssim \lambda \|f\|_{L^1(\mu)}$, 
    \item there exist pairwise disjoint dyadic cubes $Q_j$ such that $\operatorname{supp}b_j \subseteq \widehat{Q}_j$ and $$\sum_{j=1}^{\infty}\mu(Q_j)\leq \frac{1}{\lambda}\|f\|_{L^1(\mu)},$$ and
    \item $\int_{\mathbb{R}^n}b_j\,d\mu=0$ for each $j$ and $\sum_{j=1}^{\infty}\|b_j\|_{L^1(\mu)} \lesssim \|f\|_{L^1(\mu)}$.
\end{enumerate}
\end{lemma}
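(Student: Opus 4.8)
The plan is to run the standard Calder\'on--Zygmund stopping-time construction relative to the dyadic grid $\mathcal{D}$, the only feature not present in the classical (doubling) case being that, since $\mu$ need not be doubling, the constant subtracted to make each bad function have mean zero must be spread over the \emph{parent} of a stopping cube rather than over the stopping cube itself; as a consequence $g$ will only be controlled in $L^2(\mu)$, not in $L^\infty(\mu)$.

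I would first fix the stopping cubes. Let $M_\mu^d f(x):=\sup_{Q\in\mathcal{D},\,Q\ni x}\langle f\rangle_Q$ and $\Omega:=\{x:M_\mu^d f(x)>\lambda\}$. The hypothesis on $\lambda$ guarantees that $\langle f\rangle_Q\le\lambda$ for all sufficiently large dyadic cubes $Q$ --- this is immediate in the setting in which we apply the lemma, where $\mu$ is supported in a dyadic cube $Q_0$, so that every increasing chain of dyadic cubes eventually contains $Q_0$ and then has average $\|f\|_{L^1(\mu)}/\|\mu\|<\lambda$; see \cite{CAP2019}*{Theorem 4.2} for a general Radon measure. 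Consequently every point of $\Omega$ lies in a \emph{maximal} dyadic cube on which the average of $f$ exceeds $\lambda$; call these cubes $\{Q_j\}_j$. Being maximal they are pairwise disjoint and $\Omega=\bigsqcup_j Q_j$; each satisfies $\mu(Q_j)>0$; and by maximality $\langle f\rangle_{\widehat{Q}_j}\le\lambda$ and hence $\mu(\widehat{Q}_j)\geq\mu(Q_j)>0$. I would then define
$$
b_j:=f\mathbbm{1}_{Q_j}-\frac{1}{\mu(\widehat{Q}_j)}\Bigl(\int_{Q_j}f\,d\mu\Bigr)\mathbbm{1}_{\widehat{Q}_j},\qquad g:=f-\sum_j b_j=f\mathbbm{1}_{\mathbb{R}^n\setminus\Omega}+\sum_j\frac{1}{\mu(\widehat{Q}_j)}\Bigl(\int_{Q_j}f\,d\mu\Bigr)\mathbbm{1}_{\widehat{Q}_j}.
$$
Then conditions $(2)$ and $(3)$ are essentially immediate: $\supp b_j\subseteq\widehat{Q}_j$; the $Q_j$ are pairwise disjoint with $\sum_j\mu(Q_j)\le\lambda^{-1}\sum_j\int_{Q_j}f\,d\mu\le\lambda^{-1}\|f\|_{L^1(\mu)}$; the constant in $b_j$ was chosen precisely so that $\int b_j\,d\mu=0$; and $\|b_j\|_{L^1(\mu)}\le 2\int_{Q_j}f\,d\mu$, which sums to at most $2\|f\|_{L^1(\mu)}$.

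The substance of the argument is the $L^2$ bound $(1)$. Splitting $g$ into its two displayed pieces, the term $f\mathbbm{1}_{\mathbb{R}^n\setminus\Omega}$ is handled by dyadic martingale convergence, which gives $f\le M_\mu^d f\le\lambda$ $\mu$-a.e.\ on $\mathbb{R}^n\setminus\Omega$ and hence $\int_{\mathbb{R}^n\setminus\Omega}f^2\,d\mu\le\lambda\|f\|_{L^1(\mu)}$. For $\Phi:=\sum_j\mu(\widehat{Q}_j)^{-1}\bigl(\int_{Q_j}f\,d\mu\bigr)\mathbbm{1}_{\widehat{Q}_j}$ the key is to exploit the disjointness of the stopping cubes rather than any pointwise bound on $\Phi$, which is unavailable because the parents $\widehat{Q}_j$ may overlap and the coefficients $\mu(\widehat{Q}_j)^{-1}\int_{Q_j}f\,d\mu$ need not be comparable to $\langle f\rangle_{Q_j}$. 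Grouping the stopping cubes by their parent, let $\mathcal{P}$ be the set of distinct parents and $A_R:=\mu(R)^{-1}\sum_{j:\widehat{Q}_j=R}\int_{Q_j}f\,d\mu$, so $\Phi=\sum_{R\in\mathcal{P}}A_R\mathbbm{1}_R$ and, since for fixed $R$ the cubes $Q_j$ with $\widehat{Q}_j=R$ are disjoint children of $R$, $A_R\le\langle f\rangle_R\le\lambda$. Since two dyadic cubes are either nested or disjoint,
$$
\int\Phi^2\,d\mu=\sum_{R\in\mathcal{P}}A_R^2\,\mu(R)+2\sum_{\substack{R,R'\in\mathcal{P}\\ R\subsetneq R'}}A_R A_{R'}\,\mu(R),
$$
and the diagonal term is at most $\lambda\sum_{R\in\mathcal{P}}A_R\,\mu(R)=\lambda\sum_j\int_{Q_j}f\,d\mu\le\lambda\|f\|_{L^1(\mu)}$, while for the off-diagonal term I would sum over $R$ first: $\sum_{R\subsetneq R'}A_R\,\mu(R)=\sum_{j:\widehat{Q}_j\subsetneq R'}\int_{Q_j}f\,d\mu\le\int_{R'}f\,d\mu\le\lambda\mu(R')$ by disjointness of the $Q_j$, so that term is also at most $\lambda\sum_{R'\in\mathcal{P}}A_{R'}\,\mu(R')\le\lambda\|f\|_{L^1(\mu)}$. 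Adding these contributions yields $\|g\|_{L^2(\mu)}^2\lesssim\lambda\|f\|_{L^1(\mu)}$.

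I expect this $L^2$ estimate to be the only genuinely delicate point. In the doubling case one would instead take $b_j=(f-\langle f\rangle_{Q_j})\mathbbm{1}_{Q_j}$, supported on the disjoint cubes $Q_j$ with $\langle f\rangle_{Q_j}\lesssim\lambda$, so that $\|g\|_{L^\infty(\mu)}\lesssim\lambda$ and $(1)$ reduces to an $L^1$ bound; without doubling this fails, and one must extract $(1)$ directly from the disjointness of the stopping cubes as above, keeping careful track of the overlaps among their parents. The remaining ingredients --- that $\Omega=\bigsqcup_j Q_j$ and that dyadic averages of an $L^1(\mu)$ function converge $\mu$-a.e.\ --- are standard facts from martingale theory for the filtration generated by $\mathcal{D}$; full details of the decomposition appear in \cite{CAP2019}*{Theorem 4.2} and \cite{LSMP2014}*{Theorem 2.1}.
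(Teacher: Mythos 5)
Your proof is correct and uses the same decomposition as the paper: same stopping cubes $Q_j$, same $b_j$ and $g$, and the $L^2$ bound for the good function rests on the same two facts (the a.e.\ pointwise bound $f\le\lambda$ off $\Omega$, and the disjointness of the $Q_j$ inside nested parents together with $\langle f\rangle_{\widehat{Q}_j}\le\lambda$). The only difference is organizational: you group by the distinct parents $R\in\mathcal{P}$ and estimate the diagonal and off-diagonal contributions to $\int\Phi^2\,d\mu$ separately, whereas the paper expands $\|g_2\|_{L^2(\mu)}^2$ directly as a double sum over pairs of stopping cubes and uses symmetry plus the trichotomy $\widehat{Q}_i\cap\widehat{Q}_j\in\{\widehat{Q}_i,\widehat{Q}_j,\emptyset\}$ --- the two presentations are equivalent.
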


\begin{proof}[Proof of Lemma \ref{CZDecomposition}]
We only consider the case when $\mu(\mathbb{R}_j^n)=\infty$ for each $j=1,2,\ldots,2^n$, where the $\mathbb{R}_j^n$ denote the $2^n$ $n$-dimensional quadrants in $\mathbb{R}^n$; the case where at least one quadrant has finite measure is handled using arguments of \cite{LSMP2014}*{Section 3.4}. 
Write 
$$
    \Omega:=\{Mf>\lambda\}=\bigcup_{j=1}^{\infty} Q_j,
$$
where the $Q_j$ are maximal dyadic cubes in the sense that 
$$
    \langle f \rangle_Q \leq \lambda < \langle f \rangle_{Q_j}
$$
whenever $Q\supsetneq Q_j$. Set 
$$
    g:=f\mathbbm{1}_{\mathbb{R}^n\setminus\Omega}+\sum_{j=1}^{\infty} \langle f\mathbbm{1}_{Q_j}\rangle_{\widehat{Q}_j}\mathbbm{1}_{\widehat{Q}_j}
$$
and
$$
    b:=\sum_{j=1}^{\infty}b_j,\quad\text{where}\quad b_j:=f\mathbbm{1}_{Q_j}-\langle f\mathbbm{1}_{Q_j}\rangle_{\widehat{Q}_j}\mathbbm{1}_{\widehat{Q}_j}.
$$
Clearly,
$$
    f=g+b=g+\sum_{j=1}^{\infty}b_j.
$$

To prove (1), write $g=g_1+g_2$ where 
$$
    g_1:=f\mathbbm{1}_{\mathbb{R}^n\setminus\Omega} \quad\text{and}\quad g_2:=\sum_{j=1}^{\infty}\langle f\mathbbm{1}_{Q_j}\rangle_{\widehat{Q}_j}\mathbbm{1}_{\widehat{Q}_j}.
$$
By the Lebesgue Differentiation Theorem, $\|g_1\|_{L^{\infty}(\mu)}\leq \lambda$, and so $\|g_1\|_{L^2(\mu)}^2\leq\lambda\|f\|_{L^1(\mu)}$. On the other hand,
$$
\| g_2\|_{L^2(\mu)}^2
=\sum_{i,j=1}^{\infty}\langle f\mathbbm{1}_{Q_i}\rangle_{\widehat{Q}_i}
\langle f\mathbbm{1}_{Q_j}\rangle_{\widehat{Q}_j}\mu(\widehat{Q}_i\cap \widehat{Q}_j).
$$
Since $\widehat{Q}_i\cap \widehat{Q}_j\in \{
\widehat{Q}_i, \widehat{Q}_j, \emptyset \}$, by symmetry we have
$$
\| g_2\|_{L^2(\mu)}^2
\leq 2\sum_{i=1}^{\infty}\langle f\mathbbm{1}_{Q_i}\rangle_{\widehat{Q}_i}
\sum_{\widehat{Q}_j\subseteq \widehat{Q}_i}
\langle f\mathbbm{1}_{Q_j}\rangle_{\widehat{Q}_j}\mu(\widehat{Q}_j)
= 2\sum_{i=1}^{\infty}\langle f\mathbbm{1}_{Q_i}\rangle_{\widehat{Q}_i}
\sum_{\widehat{Q}_j\subseteq \widehat{Q}_i}
\int_{Q_j} fd\mu.
$$
Now, since $Q_j\subsetneq \widehat{Q}_j\subseteq \widehat{Q}_i$
and since the cubes $Q_j$ are pairwise disjoint 
by maximality, we get
$$
\| g_2\|_{L^2(\mu)}^2
\leq 2 \sum_{i=1}^{\infty}\langle f\mathbbm{1}_{Q_i}\rangle_{\widehat{Q}_i}
\int_{\widehat{Q}_i} fd\mu
= 2 \sum_{i=1}^{\infty}\langle f\rangle_{\widehat{Q}_i}
\int_{Q_i} fd\mu 
\leq 2\lambda \| f\|_{L^{1}(\mu)}.
$$

For property (2), notice that $\operatorname{supp}b_j\subseteq \widehat{Q}_j$ by definition of $b_j$. Also, the cubes $Q_j$ are pairwise disjoint by maximality. With this and the stopping condition $\lambda < \langle f\rangle_{Q_j}$ for each $j$, we have 
$$
    \sum_{j=1}^{\infty} \mu(Q_j) <\sum_{j=1}^{\infty}\frac{1}{\lambda}\|f\mathbbm{1}_{Q_j}\|_{L^1(\mu)}\leq \frac{1}{\lambda}\|f\|_{L^1(\mu)}.
$$

Property (3) follows, first using Fubini's theorem to see
$$
    \int_{\mathbb{R}^n} b_j\,d\mu =\int_{Q_j}f\,d\mu-\int_{\widehat{Q}_j}\langle f\mathbbm{1}_{Q_j}\rangle_{\widehat{Q}_j}\,d\mu =0.
$$
Therefore, 
$$
    \sum_{j=1}^{\infty}\|b_j\|_{L^1(\mu)} \leq \sum_{j=1}^{\infty} \left( \|f\mathbbm{1}_{Q_j}\|_{L^1(\mu)}+\|\langle f\mathbbm{1}_{Q_j}\rangle_{\widehat{Q}_j}\mathbbm{1}_{\widehat{Q}_j}\|_{L^1(\mu)}\right)\lesssim \sum_{j=1}^{\infty}\|f\mathbbm{1}_{Q_j}\|_{L^1(\mu)}\leq \|f\|_{L^1(\mu)}.
$$
\end{proof}

\begin{proof}[Proof of Lemma \ref{HaarTruncationWeakType}]
Since $\supp\mu \subseteq Q_0$, 
we have $\mu(Q)=\mu(\widehat{Q})$ and $\int_{Q}
fd\mu=\int_{\widehat{Q}}fd\mu$ for every dyadic cube $Q$ containing $Q_0$ and every $f\in L^1(\mu)$. With this, we have
\begin{align}\label{product2}
\langle f,h_{Q}\rangle 
&=\mu(Q)^{\frac{1}{2}}(\langle f\rangle_{Q}-\langle f\rangle_{\widehat{Q}})
=0
\end{align}
for such cubes $Q$. By (\ref{product2}) and the fact that $h_Q$ is not defined for dyadic cubes $Q$ such that $\widehat{Q} \cap Q_0 = \emptyset$, we only need to work with cubes
satisfying $\widehat{Q}\subseteq Q_{0}$, or equivalently, cubes in ${\mathcal D}(Q_0)$. 
Furthermore, by the mean zero of $h_P$, we have that 
$$
    T^{\max}\mathbbm{1}_{Q_0}=\sup_{Q\in\mathcal{D}}\Bigg|\sum_{\substack{P \in \mathcal{D}\\Q_0\supseteq  \widehat{P}\supsetneq Q}} \varepsilon_P\langle \mathbbm{1}_{Q_0}, h_P\rangle h_P\Bigg|=0.
$$ 
With this 
$T^{\max}f
=T^{\max}(f-\langle f\rangle _{Q_0}\mathbbm{1}_{Q_0})$
and so, we can assume that $f$ has mean zero.

Let $\displaystyle \varepsilon :=\sup_{\substack{Q\in {\mathcal D(Q_0)} 
}}|\varepsilon_{Q}|$. We wish to show that for all $\lambda>0$ and all $f \in L^1(\mu)$, we have $$\mu(\{T^{\max}f>\lambda\})\lesssim \frac{\varepsilon}{\lambda}\|f\|_{L^1(\mu)}.$$ 

Fix $x\in \mathbb R^{n}$ and $Q \in \mathcal{D}(Q_{0})$. If $x$ is not in the same quadrant of $\mathbb{R}^n$ as $Q$, then $h_P(x)=0$ for every $P$ with $\widehat{P}\supsetneq Q$, and therefore $T^{\max}f(x)=0$. If $x$ and $Q$ are in the same quadrant, let $K$ be the unique dyadic cube containing $x$ such that $\widehat{K}$ is the smallest dyadic cube with $\{x\}\cup Q\subseteq \widehat{K}$. For all $P\in \mathcal{D}$ such that
$Q\subsetneq \widehat{P}\subsetneq \widehat{K}$ we have 
$h_P(x)=0$, and so
\begin{align*}
\Bigg| \sum_{\substack{P\in\mathcal{D}(Q_0)\\ \widehat{P}\supsetneq Q}}\varepsilon_P\langle f,h_P \rangle h_P(x)\Bigg|&=
\Bigg|\sum_{\substack{P\in\mathcal{D}(Q_0) \\ \widehat{P}\supsetneq K}}\varepsilon_P\langle f,h_P \rangle h_P(x)\Bigg|
\\
&=\frac{1}{\mu(K)}\Bigg|\int_{K}\sum_{\substack{P\in\mathcal{D}(Q_0) \\ \widehat{P}\supsetneq K}} \varepsilon_P\langle f,h_P \rangle h_P(y)\,d\mu(y)\Bigg|\\
&=\frac{1}{\mu(K)}\Bigg|\int_{K}\sum_{P\in\mathcal{D}} \varepsilon_P\langle f,h_P \rangle h_P(y)\,d\mu(y)\Bigg|\\
&=|\langle Tf \rangle_{K}|\leq M(Tf)(x),
\end{align*}
where we have used the fact that $\int_K h_P\, d\mu =0$ for $P\in {\mathcal D}$ such that 
$\widehat{P}\cap K=\emptyset $ or $\widehat{P}\subseteq K$.
Taking the supremum over all cubes $Q \in \mathcal{D}$ gives $$T^{\max}f(x) \leq M(Tf)(x).$$

To complete the proof, apply Lemma \ref{CZDecomposition} to $f$ at height $\frac{\lambda}{\varepsilon}$ to write 
$$
    f=g+b=g+\sum_{j=1}^{\infty}b_j,
$$
where properties (1), (2), and (3) of the lemma hold. Moreover, since $f$ has mean zero by assumption and $b$ has mean zero by construction, $g$ also has zero mean.
Then 
$$
    \mu(\{T^{\max}f>\lambda\}) \leq \mu\bigg(\bigg\{T^{\max}g>\frac{\lambda}{2}\bigg\}\bigg)+ \mu\bigg(\bigcup_{j=1}^{\infty}Q_j\bigg)+\mu\bigg(\bigg\{\mathbb{R}^n\setminus \bigcup_{j=1}^{\infty}Q_j: T^{\max}b>\frac{\lambda}{2}\bigg\}\bigg).
$$

Use Chebyshev's inequality, the boundedness of $M$ on $L^2(\mu)$, and property (1) of Lemma \ref{CZDecomposition} to bound the first term as follows:  \begin{align*}
\mu\bigg(\bigg\{T^{\max}g>\frac{\lambda}{2}\bigg\}\bigg)&\lesssim \frac{1}{\lambda^2}\|M(Tg)\|_{L^2(\mu)}^2
\lesssim \frac{1}{\lambda^2}\|Tg\|_{L^2(\mu)}^2
\\
&\lesssim \frac{1}{\lambda^2}\sum_{\substack{Q\in {\mathcal D}(Q_0) 
}}|\varepsilon_{Q}|^{2}|\langle g,h_{Q}\rangle|^{2}
\lesssim \frac{\varepsilon^2}{\lambda^2}\|g\|_{L^2(\mu)}^2
\\
&\lesssim\frac{\varepsilon}{\lambda}\|f\|_{L^1(\mu)}.
\end{align*}


The second term is controlled above by property (2) of Lemma \ref{CZDecomposition}: $$\mu\bigg(\bigcup_{j=1}^{\infty}Q_j\bigg)=\sum_{j=1}^{\infty}\mu(Q_j)\leq \frac{\varepsilon}{\lambda}\|f\|_{L^1(\mu)}.$$

For the third term, we fix $x\in \mathbb R^{n}\backslash \bigcup_{j=1}^{\infty}Q_j$ and $Q \in \mathcal{D}(Q_0)$. By linearity
$$
\Bigg| \sum_{\substack{P \in \mathcal{D}(Q_0) \\ \widehat{P}\supsetneq Q}}\varepsilon_P\langle b,h_P\rangle h_P(x)\Bigg|
\leq  \sum_{j=1}^{\infty}\Bigg| \sum_{\substack{P \in \mathcal{D}(Q_0) \\ \widehat{P}\supsetneq Q}}\varepsilon_P\langle b_j,h_P\rangle h_P(x)\Bigg|.
$$ 
For a fixed index $j$ and fixed $P \in \mathcal{D}(Q_0)$ with $Q\subsetneq \widehat{P}$, we consider three cases:  
\begin{enumerate}
\item[(a)] when $\widehat{Q}_j \subsetneq \widehat{P}$, then $\langle b_j, h_P\rangle=0$ since $h_P$ is constant on $\widehat{Q}_j$ and $b_j$ has mean value zero on $\widehat{Q}_j$, 
\item[(b)] when $\widehat{Q}_j\cap \widehat{P}=\emptyset $, we have $\langle b_j,h_P\rangle=0$ due to their disjoint supports, and
\item[(c)] when $\widehat{P} \subsetneq \widehat{Q}_j$, it must be that $\widehat{P} \subseteq Q_j'$ for some $Q_j' \in \ch{(\widehat{Q}_j)}$. If $Q_j' \neq Q_j$, then $\langle b_j,h_P\rangle =0$ since $b_j$ is constant on $Q_j'$ and $h_P$ has mean value zero on $\widehat{P}\subseteq Q_j'$. If $\widehat{P} \subseteq Q_j$, then $h_P(x)=0$ since $x \not \in Q_j$ and $\text{supp}(h_P) \subseteq \widehat{P}$.
\end{enumerate}
We are left with the case $\widehat{P}=\widehat{Q}_j$, 
so $$
    \Bigg| \sum_{\substack{P \in \mathcal{D}(Q_0) \\ \widehat{P}\supsetneq Q}}\varepsilon_P\langle b,h_P\rangle h_P(x)\Bigg| \leq \sum_{j=1}^{\infty}\Bigg|\sum_{\substack{P \in \mathcal{D}(Q_0)\\ \widehat{P}\supseteq Q \\ \widehat{P}=\widehat{Q}_j}} \varepsilon_P\langle b_j,h_P \rangle h_P(x)\Bigg| \lesssim \varepsilon\sum_{j=1}^{\infty}\sum_{\substack{P\in\mathcal{D}\\ \widehat{P}=\widehat{Q}_j}}|\langle b_j,h_P\rangle||h_P(x)|.
$$

Taking the supremum over $Q \in \mathcal{D}$, we have 
$$
    T^{\max}b(x)\leq \varepsilon\sum_{j=1}^{\infty}\sum_{\substack{P\in\mathcal{D}\\ \widehat{P}=\widehat{Q}_j}}|\langle b_j,h_P\rangle| |h_p(x)|
$$ 
for $x \not \in \bigcup_{j=1}^{\infty}Q_j$. 
Now, by definition
$$
    |\langle b_j , h_P \rangle| = \mu(P)^{-\frac{1}{2}}\Big|\int_{\mathbb{R}^n}f\mathbbm{1}_{Q_j}\mathbbm{1}_P-\langle f\mathbbm{1}_{Q_j}\rangle_{\widehat{Q}_j}\mathbbm{1}_{P}-\frac{\mu(P)}{\mu(\widehat{P})}f\mathbbm{1}_{Q_j}+\frac{\mu(P)}{\mu(\widehat{P})}\langle f\mathbbm{1}_{Q_j}\rangle_{\widehat{Q}_j}\mathbbm{1}_{\widehat{Q}_j}\,d\mu \Big| ,
$$
and since $\widehat{P}=\widehat{Q}_j$, we have 
$$
    |\langle b_j,h_P\rangle| \lesssim \mu(P)^{-\frac{1}{2}}\|f\mathbbm{1}_{Q_j}\|_{L^1(\mu)}.
$$
On the other hand, 
\begin{align*}
\|h_P\|_{L^1(\mu)}
&= \mu(P)^{-\frac{1}{2}}
\int_{\mathbb{R}^n} \bigg(\mathbbm{1}_{P}(x)-\frac{\mu(P)}{\mu(\widehat{P})}\mathbbm{1}_{\widehat{P}}(x)\bigg)\,d\mu(x)
\leq 2\mu(P)^{\frac{1}{2}}
\end{align*}
Therefore using Chebyshev's inequality and the above estimates, we have
\begin{align*}
    \mu\bigg(\bigg\{\mathbb{R}^n\setminus\bigcup_{j=1}^{\infty}Q_j: T^{\max}b>\frac{\lambda}{2}\bigg\}\bigg)&\lesssim \frac{1}{\lambda}\int_{\mathbb{R}^n\setminus \bigcup_{j=1}^{\infty}Q_j} T^{\max}b(x)\,d\mu(x)\\
    &\leq \frac{\varepsilon}{\lambda}\int_{\mathbb{R}^n\setminus \bigcup_{j=1}^{\infty}Q_j} \sum_{j=1}^{\infty}\sum_{\substack{P\in\mathcal{D}\\ \widehat{P}=\widehat{Q}_j}}|\langle b_j,h_P\rangle|
    |h_P(x)|
    \\
    &\lesssim \frac{\varepsilon}{\lambda}\sum_{j=1}^{\infty}\sum_{\substack{P\in\mathcal{D}\\ \widehat{P}=\widehat{Q}_j}}|\langle b_j,h_P\rangle|\mu(P)^{\frac{1}{2}}\\
    &\lesssim \frac{\varepsilon}{\lambda}\sum_{j=1}^{\infty}\sum_{\substack{P\in\mathcal{D}\\ \widehat{P}=\widehat{Q}_j}}\|f\mathbbm{1}_{Q_j}\|_{L^1(\mu)}\\
    &\lesssim \frac{\varepsilon}{\lambda}\|f\|_{L^1(\mu)}.
\end{align*}

Combining all estimates gives
$$
    \mu(\{T^{\max}f>\lambda\})\lesssim\frac{\varepsilon}{\lambda}\|f\|_{L^1(\mu)}.
$$
\end{proof}
\begin{rem}
\label{TfPointwiseControl}
We note for later reference that $|Tf|\leq T^{\max}f$ pointwise. Indeed, by definition and using that $\supp\mu\subseteq Q_0$, we have 
$$
|Tf(x)|=\lim_{\substack{\ell(Q)\rightarrow 0\\x\in Q}}\Bigg|\sum_{\substack{P \in \mathcal{D}(Q_{0})\\ \widehat{P}\supseteq Q}} \varepsilon_P\langle f, h_P\rangle h_P(x)\Bigg|
\leq \sup_{Q\in \mathcal{D}}\Bigg|\sum_{\substack{P \in \mathcal{D}(Q_{0})\\ \widehat{P}\supseteq Q}} \varepsilon_P\langle f, h_P\rangle h_P(x)\Bigg|=T^{\max}f(x).
$$
\end{rem}

\subsection{Sparse domination}
\label{HaarSparse}

We are ready to prove the sparse bound for Haar multipliers. Our proof follows closely the ideas of \cite{L2017}. Differences include using the auxiliary maximal function $M_{\varepsilon}$, using the Haar wavelet frame $\{h_Q\}_{Q\in \mathcal{D}}$, and tracking the role of the coefficients $\varepsilon_Q$ throughout.

\begin{proof}[Proof of Theorem \ref{HaarSparseBound}]
Since $\langle f,h_Q\rangle =0$ for all $Q\in {\mathcal D}$ such that $Q_0\subsetneq \widehat{Q}$ or $Q_0\cap \widehat{Q}=\emptyset $, 
we only need to work with cubes 
in $\mathcal D(Q_{0})$, meaning that 
\begin{equation}\label{TQ}
Tf=\sum_{Q\in {\mathcal D}(Q_0)}
\varepsilon_Q\langle f, h_{Q}\rangle h_{Q}=:T_{Q_0}f.
\end{equation}

We start by adding the cube $Q_{0}$ to the family $\mathcal S$ and the function 
$\tilde \varepsilon_{Q_{0}} \langle |f|\rangle_{Q_{0}}\mathbbm{1}_{Q_{0}}$ to the sparse operator 
$S_{\varepsilon }$.
Define 
$$
E_{Q_{0}}:=\left\{x\in Q_{0}: \max\left\{M_{\varepsilon}f(x),T^{\max}f(x)\right\}> 2C\tilde \varepsilon_{Q_{0}} \langle |f|\rangle_{Q_{0}}\right\}.
$$
where $C>0$ is the sum of the implicit constants in 
Lemma \ref{MaxWeakType} and Lemma \ref{HaarTruncationWeakType}. By these two results, we have
$$
\mu(E_{Q_{0}})\leq \frac{1}{2C \tilde \varepsilon_{Q_{0}}  \langle |f|\rangle_{Q_{0}}} C
\max_{Q\in \mathcal D(Q_{0})} |\varepsilon_{Q}| \| f\|_{L^1(\mu)}
\leq \frac{1}{2}\mu(Q_0).
$$



Let $\mathcal E_{Q_{0}} $ be the family of maximal dyadic cubes $P$ contained in $E_{Q_{0}}$. 
For $x\in Q_{0}\backslash E_{Q_{0}}$ we trivially have 
$$
|Tf(x)|\leq T^{\max}f(x)\leq 2C\tilde \varepsilon_{Q_{0}} \langle |f|\rangle_{Q_{0}}
\mathbbm{1}_{Q_{0}}(x)\lesssim S_{\varepsilon} |f|(x).
$$
Otherwise, consider $x \in E_{Q_0}$. Let  $P\in \mathcal E_{Q_{0}}$ be the unique cube such that $x\in P\subseteq Q_{0}$. We formally decompose $Tf$ as follows:
\begin{align*}
Tf&=\sum_{\substack{I\in \mathcal D\\ \widehat{I}\supsetneq \widehat{P}}}\varepsilon_{I}
\langle f, h_{I}\rangle h_{I}
+\sum_{
\substack{I\in \mathcal D\\ I\in \text{ch}(\widehat{P})}}\varepsilon_{I}\langle f, h_{I}\rangle h_{I}
+\sum_{\substack{I\in \mathcal D\\ \widehat{I}\subsetneq \widehat{P}}}\varepsilon_{I}\langle f, h_{I}\rangle h_{I}
+\sum_{\substack{I\in \mathcal D\\ \widehat{I}\cap \widehat{P}=\emptyset 
}}\varepsilon_{I}\langle f, h_{I}\rangle h_{I}.
\end{align*}
In the third term,  
$\widehat{I}\subsetneq \widehat{P}$ implies $\widehat{I}\subseteq P'$ for some $P'\in \child(\widehat{P})$, and so $I\in \mathcal D(P')$. 
In the last term, 
$\widehat{I}\cap \widehat{P}=\emptyset $
implies $\widehat{I}\cap P=\emptyset$. And since $x \in P$ and $\supp h_I \subseteq \widehat{I}$, we have
$h_{I}(x)=0$ 
for all cubes $I\in \mathcal D$ with $\widehat{I}\cap P=\emptyset$. This implies that the fourth term vanishes. Then
\begin{align*}
Tf(x)=&\sum_{\substack{I\in \mathcal D\\ \widehat{I}\supsetneq \widehat{P}}}\varepsilon_{I}
\langle f, h_{I}\rangle h_{I}(x)
+\sum_{I\in \child(\widehat{P})}\varepsilon_{I}\langle f, h_{I}\rangle h_{I}(x)
+\sum_{P'\in \child(\widehat{P})}\sum_{I\in \mathcal D(P')}\varepsilon_{I}\langle f, h_{I}\rangle h_{I}(x)
\end{align*}
Now we use the decomposition in \eqref{product} and the facts that 
$h_{I}(x)=0$ if $I\in \mathcal D(P')$ for $P'\in \child(\widehat{P})\setminus \{P\}$, and 
$\mathbbm{1}_{I}(x)=0$ if $I\in \child(\widehat{P})\setminus \{P\}$, 
to write
\begin{align}\label{decomp}
\nonumber
Tf(x)&
=\sum_{\substack{I\in \mathcal D\\ \widehat{I}\supsetneq \widehat{P}}}\varepsilon_{I}\langle f, h_{I}\rangle h_{I}(x)
+\sum_{I\in \child(\widehat{P})}(\varepsilon_{I}a_{I}(x)+\varepsilon_{I}\langle f \rangle_{I}\mathbbm{1}_{I}(x))
+\sum_{I\in \mathcal D(P)}\varepsilon_{I}\langle f, h_{I}\rangle h_{I}(x)
\\
&
=\sum_{\substack{I\in \mathcal D\\ \widehat{I}\supsetneq \widehat{P}}}\varepsilon_{I}\langle f, h_{I}\rangle h_{I}(x)
+\sum_{I\in \child(\widehat{P})}\varepsilon_{I}a_{I}(x)+\varepsilon_{P}\langle f \rangle_{P}\mathbbm{1}_{P}(x)
+T_Pf(x),
\end{align}
where $T_P$ is as defined in \eqref{TQ}. 

By maximality of $P$, there exists a point $y\in \widehat{P}\setminus E_{Q_{0}}$. The first term in \eqref{decomp} can be bounded as follows:
$$
\Bigg|\sum_{\substack{I\in \mathcal D\\ \widehat{I}\supsetneq \widehat{P}}}\varepsilon_{I}
\langle f, h_{I}\rangle h_{I}(x)\Bigg|
=\Bigg| \sum_{\substack{I\in \mathcal D\\ \widehat{I}\supsetneq \widehat{P}}}\varepsilon_{I}
\langle f, h_{I}\rangle h_{I}(y)\Bigg|
\leq T^{\max}f(y)\leq 2C\tilde \varepsilon_{Q_0} \langle f\rangle_{Q_{0}}.
$$
Similarly, since $|a_{I}(x)|\leq 3\langle |f| \rangle_{\widehat{P}}\mathbbm{1}_{\widehat{P}}(x)$ for $I\in \child(\widehat{P})$, we have 
for the second term 
\begin{align*}
\Big|\sum_{I\in \child(\widehat{P})}\varepsilon_{I}a_{I}(x)
\Big|
&\lesssim \sum_{I\in \child(\widehat{P})}|\varepsilon_{I}| \langle |f| \rangle_{\widehat{P}}
 \mathbbm{1}_{\widehat{P}}(x)
\leq 2^{n}\max_{I\in \child(\widehat{P})}|\varepsilon_{I} |\langle |f| \rangle_{\widehat{P}}\mathbbm{1}_{\widehat{P}}(x)
\\
&
= 2^{n}\max_{I\in \child(\widehat{P})}|\varepsilon_{I} |\langle |f| \rangle_{\widehat{P}}\mathbbm{1}_{\widehat{P}}(y)
\leq 2^{n}M_{\varepsilon}f(y)\leq 2^{n+1}C\tilde \varepsilon_{Q_{0}} \langle |f|\rangle_{Q_{0}}.
\end{align*}



For the third term, 
we directly add the cubes $P\in \mathcal{E}_{Q_{0}}$ to the family $\mathcal S$ and the functions 
$ \tilde \varepsilon_{P}\langle |f|\rangle_{P}\mathbbm{1}_{P}$ to $S_{\varepsilon}|f|$. By disjointness, the sparseness condition holds for $Q_0$:
$$
\sum_{P\in \mathcal E_{Q_0}} \mu(P)\leq \mu(E_{Q_0})\leq \frac{1}{2}\mu(Q_0).
$$ 
The 
last term in \eqref{decomp} is treated by repeating the previous reasoning applied to $T_Pf$ instead of $Tf=T_{Q_0}f$, that is, 
starting the argument with 
$$
E_{P}:=\left\{x\in P: \max\left\{M_{\varepsilon}f(x),T_P^{\max}f(x)\right\}> 2C\tilde \varepsilon_{P} \langle |f|\rangle_{P}\right\}
$$
and adding to ${\mathcal S}$ 
the family $\mathcal E_{P}$ of maximal dyadic cubes contained in $E_{P}$. 
\end{proof}

\subsection{Boundedness and compactness on weighted spaces}
\label{HaarCompactness}

We now study boundedness and compactness of Haar multiplier operators on weighted spaces. We first prove the boundedness result Theorem \ref{Haarboundedness}.




We will use the following \emph{dyadic maximal function adapted to weights}. Given a locally integrable and positive almost everywhere function $w$, we define $M_{w}$ by 
$$
    M_{w}f(x)
    :=\sup_{\substack{Q\in \mathcal{D}\\ x\in Q}}\frac{1}{w(Q)}\int_Q |f|w\,d\mu,
$$
where $w(Q):=\int_Q w\,d\mu$. The following lemma is well-known, see \cites{LSMP2014,M2012} for example.
\begin{lemma}
\label{WeightedMaxBoundedness}
If $w$ is locally integrable and positive almost everywhere and $1<p<\infty$, then $M_{w}$ is bounded from $L^p(w)$ to itself. Moreover, $\|M_{w }\|_{L^p(w)\rightarrow L^p(w)}$ does not depend on $w$. 
\end{lemma}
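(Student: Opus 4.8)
The plan is to recognize $M_w$ as the dyadic Hardy--Littlewood maximal operator associated to the Radon measure $\nu := w\,d\mu$. Writing $\nu(Q) = \int_Q w\,d\mu = w(Q)$, one has
$$
M_w f(x) = \sup_{\substack{Q\in\mathcal D\\ x\in Q}} \frac{1}{\nu(Q)}\int_Q |f|\,d\nu =: M^\nu_{\mathcal D}f(x),
$$
so it suffices to prove that $M^\nu_{\mathcal D}$ is bounded on $L^p(\nu)$ for $1<p<\infty$ with an operator norm depending only on $p$, not on $\nu$. Since $\nu$ is locally finite, each dyadic cube has finite $\nu$-measure and the averages above make sense for $f\in L^p(\nu)$ by H\"older's inequality.

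First I would establish the weak-type $(1,1)$ bound $\|M^\nu_{\mathcal D}f\|_{L^{1,\infty}(\nu)} \le \|f\|_{L^1(\nu)}$ by the usual dyadic stopping-time argument: for fixed $\lambda>0$, let $\{Q_j\}$ be the cubes that are maximal among those dyadic cubes $Q$ with $\frac{1}{\nu(Q)}\int_Q|f|\,d\nu>\lambda$ (these exist and yield the relevant Calder\'on--Zygmund decomposition; the degenerate configurations with infinite-measure quadrants are handled exactly as in \cite{LSMP2014}). The $Q_j$ are pairwise disjoint and $\{M^\nu_{\mathcal D}f>\lambda\} = \bigcup_j Q_j$, so
$$
\nu\big(\{M^\nu_{\mathcal D}f>\lambda\}\big) = \sum_j \nu(Q_j) \le \frac{1}{\lambda}\sum_j \int_{Q_j}|f|\,d\nu \le \frac{1}{\lambda}\|f\|_{L^1(\nu)},
$$
with constant $1$ independent of $\nu$. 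Together with the trivial bound $\|M^\nu_{\mathcal D}f\|_{L^\infty(\nu)}\le\|f\|_{L^\infty(\nu)}$, the Marcinkiewicz interpolation theorem gives $\|M^\nu_{\mathcal D}f\|_{L^p(\nu)}\le C_p\|f\|_{L^p(\nu)}$; since the two endpoint constants both equal $1$, the constant $C_p$ depends only on $p$.

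Translating back through $\nu = w\,d\mu$ yields $\|M_w f\|_{L^p(w)} \le C_p \|f\|_{L^p(w)}$ with $C_p$ independent of $w$, which is the claim. The only genuinely delicate point is the existence of the maximal stopping cubes $Q_j$ when $\mu$ (equivalently $\nu$) is neither doubling nor everywhere of infinite mass, and this is exactly the content of \cites{LSMP2014,M2012}; everything else is routine. Because the argument never invokes any $A_p$-type hypothesis on $w$, the resulting norm bound is uniform over all weights.
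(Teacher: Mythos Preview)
The paper does not supply its own proof of this lemma; it simply states that the result is well known and cites \cite{LSMP2014} and \cite{M2012}. Your argument---identifying $M_w$ with the dyadic maximal operator relative to the measure $\nu=w\,d\mu$, proving the weak-type $(1,1)$ bound via maximal stopping cubes, and interpolating with the trivial $L^\infty$ bound---is exactly the standard proof found in those references, and it is correct.

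One small slip: you write that ``the degenerate configurations with infinite-measure quadrants'' require special care. It is the other way around: when every quadrant has \emph{infinite} $\nu$-measure the averages $\nu(Q)^{-1}\int_Q|f|\,d\nu$ tend to zero as $\ell(Q)\to\infty$, so maximal cubes always exist and the stopping argument runs cleanly. The delicate case is a quadrant with \emph{finite} $\nu$-measure, where an increasing chain of cubes with large average need not terminate; this is precisely what \cite{LSMP2014}*{Section~3.4} addresses. This does not affect the validity of your argument, since you cite the correct source for the fix.
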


\begin{proof}[Proof of Theorem \ref{Haarboundedness}]
Our proof closely follows the argument in \cite{M2012}. 

It is enough to consider the case where $\mu$ is compactly supported, as long as we obtain bounds that are independent of $\supp\mu$. Assuming $\mu $ has compact support, there exist pairwise disjoint dyadic cubes $\{Q^k\}_{k=1}^{2^n}$ with each $Q^k$ in one of the quadrants of $\mathbb{R}^n$ and such that $\supp \mu \subseteq \bigcup_{k=1}^{2^n}Q^k$. Dividing $\mu$ into the $2^n$ measures $\mu_k(A):=\mu(A\cap Q^k)$, we can further assume that $\supp \mu$ is contained in a dyadic cube.

Suppose that $p\ge 2$ and set $\sigma=w^{1-p'}$. We use the equivalence 
$$
    \|T\|_{L^p(w)\rightarrow L^p(w)} = \|T(\cdot\,\sigma)\|_{L^p(\sigma)\rightarrow L^p(w)}
$$
and proceed by duality. Let $f\in L^p(\sigma)$ and $g\in L^{p'}(w)$ be nonnegative functions with compact support. 
Apply 
Theorem \ref{HaarSparseBound} 
to obtain the estimate
$$
    \langle |T(f\sigma)|, gw\rangle 
    \lesssim \langle S_{\varepsilon}(f\sigma),gw\rangle 
    =\sum_{j,k} \tilde \varepsilon_{Q_{j}^{k}}\langle f\sigma\rangle_{Q_j^k}\langle gw\rangle_{Q_j^k}\mu(Q_j^k),
$$
where we denote the cubes in the sparse collection $\mathcal{S}$ chosen at the step $k$ by $Q_j^k$. 
Note that, although the cubes $Q_j^k$ 
and the coefficients $\tilde \varepsilon_{Q_{j}^{k}}$
depend on $\supp \mu$, we aim for final estimates that are independent of $\supp\mu$. 

Define $E_j^k:=Q_j^k\setminus\left(\bigcup_{j}Q_j^{k+1}\right)$. Notice that the sparseness property of the cubes $Q_j ^k$ implies that the sets $E_j^k$ are pairwise disjoint and that $\mu(Q_j^k)\leq 2 \mu(E_j^k)$.  
Using the latter inequality, the $\tilde\varepsilon A_p$ condition for $w$, and the containment $E_j^k\subseteq Q_j^k$, we have
\begin{align}\label{estmax}
\nonumber
    \langle S_{\varepsilon}(f\sigma),gw\rangle
    &= \sum_{j,k} \tilde \varepsilon_{Q_{j}^{k}}\langle f\sigma\rangle_{Q_j^k}\int_{Q_j^k} gw\,d\mu\\
    \nonumber
    &=\sum_{j,k}\tilde\varepsilon_{Q_j^k} \frac{w(Q_j^k)\sigma(Q_j^k)^{p-1}}{\mu(Q_j^k)^p}\frac{\mu(Q_j^k)^{p-1}}{w(Q_j^k)\sigma(Q_j^k)^{p-1}}\int_{Q_j^k}f\sigma\,d\mu \int_{Q_j^k}gw\,d\mu\\
    \nonumber
    &\leq [w]_{\tilde \varepsilon A_p} \sum_{j,k}\left(\frac{1}{\sigma(Q_j^k)}\int_{Q_j^k}f\sigma d\mu\right)\left(\frac{1}{w(Q_j^k)}\int_{Q_j^k}gw\,d\mu\right)\mu(Q_j^k)^{p-1}\sigma(Q_j^k)^{2-p}\\
    \nonumber
    &\leq 2^{p-1}[ w]_{\tilde\varepsilon A_p}\sum_{j,k}\left(\frac{1}{\sigma(Q_j^k)}\int_{Q_j^k}f\sigma d\mu\right)\left(\frac{1}{w(Q_j^k)}\int_{Q_j^k}gw\,d\mu\right)\mu(E_j^k)^{p-1}\sigma(E_j^k)^{2-p}.
\end{align}
By H\"older's inequality, 
$$
    \mu(E_j^k)\leq w(E_j^k)^{\frac{1}{p}}\sigma(E_j^k)^{\frac{1}{p'}},
$$
and so
$$
    \mu(E_j^k)^{p-1}\sigma(E_j^k)^{2-p}\leq
    w(E_j^k)^{\frac{p-1}{p}}\sigma(E_j^k)^{\frac{p-1}{p'}}\sigma(E_j^k)^{2-p}
    =w(E_j^k)^{\frac{1}{p'}}\sigma(E_j^k)^{\frac{1}{p}},
$$
since $\frac{p-1}{p'}+2-p=\frac{1}{p}$.
Using the estimates above, H\"older's inequality, the disjointness of the sets $E_j^k$, and Lemma \ref{WeightedMaxBoundedness}, we bound $\langle S_{\varepsilon}(f\sigma),gw\rangle$ by a constant times
\begin{align}
    \nonumber
    &[w]_{\tilde\varepsilon A_p}\sum_{j,k}\left(\frac{1}{\sigma(Q_j^k)}\int_{Q_j^k}f\sigma d\mu\right)\left(\frac{1}{w(Q_j^k)}\int_{Q_j^k}gw\,d\mu\right)w(E_j^k)^{\frac{1}{p'}}\sigma(E_j^k)^{\frac{1}{p}}\\
    \nonumber
     &\leq[w]_{\tilde\varepsilon A_p}\left(\sum_{j,k}\left(\frac{1}{\sigma(Q_j^k)}\int_{Q_j^k}f\sigma d\mu\right)^p\sigma(E_j^k)\right)^{\frac{1}{p}}\left(\sum_{j,k}\left(\frac{1}{w(Q_j^k)}\int_{Q_j^k}gw\,d\mu\right)^{p'}w(E_j^k)\right)^{\frac{1}{p'}}\\
    \nonumber&\leq [w]_{\tilde\varepsilon A_p}\|M_{\sigma}f\|_{L^p(\sigma)}\|M_{w}g\|_{L^{p'}(w)}\\
    \nonumber
    &\lesssim [w]_{\tilde\varepsilon A_p}\|f\|_{L^p(\sigma)}\|g\|_{L^{p'}(w)}.
\end{align}
The case $1<p<2$ follows from duality since $w \in \tilde\varepsilon^{p-1} A_p$ if and only if $\sigma \in \tilde\varepsilon A_{p'}$, and $[\sigma]_{\tilde\varepsilon A_{p'}}=[w]_{\tilde\varepsilon^{p-1}A_p}^{\frac{p'}{p}}$. Thus
$$
    \|T\|_{L^p(w)\rightarrow L^p(w)} = \|T^*\|_{L^{p'}(\sigma)\rightarrow L^{p'}(\sigma)}\lesssim [\sigma]_{\tilde\varepsilon A_{p'}}=[w]_{\tilde\varepsilon^{p-1} A_p}^{\frac{p'}{p}}.
$$
\end{proof}

In the second half of this subsection, we treat the compactness of Haar multipliers. 
We start with some definitions. 
For any positive integer $N$, let $\mathcal{D}_N$ denote the \emph{lagom cubes} 
$$
    \mathcal{D}_N:= \{Q\in\mathcal{D}: -2^N\leq l(Q)\leq 2^N \,\,\text{and} \,\, \text{rdist}(Q,\mathbb{B}_{2^N})\leq N\},
$$ where $\text{rdist(P,Q)}:=1+\frac{\dist(P,Q)}{\max\{\ell(P),\ell(Q)\}}$ and 
$\mathbb{B}_{2^N}$ is the ball centered at the origin with radius $2^N$. We write $\mathcal D_{N}^{c}:=\mathcal D\backslash \mathcal D_{N}$. 

\begin{thm}
\label{SparseImpliesWeightedCompactness} 
Let $1<p<\infty$ and $w \in A_p$. If $T$ is a linear operator and $\{T_N\}_{N =1}^{\infty}$ is a sequence of compact operators on $L^p(w)$ satisfying for every $\epsilon>0$, there exists $N_0 \in \mathbb{N}$ such that for all $N > N_0$ and all bounded $f$ with compact support, there exists a sparse collection $\mathcal{S}$ with 
$$
	|(T-T_N)f(x)|\leq \epsilon \sum_{Q\in\mathcal{S}}\langle|f|\rangle_Q\mathbbm{1}_Q(x)
$$
for almost every $x \in \supp f$, then $T$ extends compactly on $L^p(w)$.
\end{thm} 

\begin{proof}Since uniform limits of compact operators are compact, it suffices to show that 
$$
	\lim_{N\rightarrow \infty}\|T-T_N\|_{L^p(w)\rightarrow L^p(w)} = 0.
$$
Let $\epsilon>0$ and fix $N_0 \in \mathbb{N}$ according to the hypotheses. Let $f \in L^p(w)$ and assume without loss of generality that $f$ is nonnegative, bounded, and has compact support. Then, for any $N>N_0$, there exists a sparse collection $\mathcal{S}$ such that 
$$
	|(T-T_N)f(x)|\leq \epsilon\sum_{Q \in \mathcal{S}} \langle f\rangle_Q\mathbbm{1}_Q(x)=:\epsilon Sf(x)
$$
for almost every $x \in \supp f$. Since sparse operators are bounded on $L^p(w)$ with norm depending only on $n$, $[w]_{A_p}$, and the sparseness constant, one has
$$
	\|(T-T_N)f\|_{L^p(w)}\leq \epsilon\|Sf\|_{L^p(w)}\lesssim \epsilon\|f\|_{L^p(w)}
$$
and the result follows.
\end{proof}

We can now prove the weighted compactness result Theorem \ref{HaarWeightedCompactness}
\begin{proof}[Proof of Theorem \ref{HaarWeightedCompactness}]
Let $T_N$ be given by $T_Nf = \sum_{Q \in \mathcal{D}_N}\varepsilon_Q\langle f,h_Q\rangle h_Q$ and note that each $T_N$ is of finite rank and hence compact. Applying Theorem \ref{HaarSparseBound} gives that for every bounded $f$ with compact support, there exists a sparse collection $\mathcal{S}$ such that
$$
|(T-T_N)f(x)|= \bigg|\sum_{Q \in \mathcal{D}_N^c}\varepsilon_Q\langle f,h_Q\rangle h_Q\bigg|\lesssim \bigg(\sup_{Q \in \mathcal{D}_N^c}|\varepsilon_Q|\bigg)\sum_{Q\in\mathcal{S}}\langle |f|\rangle_Q\mathbbm{1}_Q(x)
$$
for almost every $x \in \supp f$. The result follows upon applying Theorem \ref{SparseImpliesWeightedCompactness}.
\end{proof}


\section{Calder\'on-Zygmund Operators}
\label{CZOs}

\subsection{Notation and definitions} 

In this section, all of our integrals, averages, pairings, etcetera will be taken with respect to Lebesgue measure on $\mathbb{R}^n$. We write $m$ for Lebesgue measure and denote the Lebesgue measure of a set $A \subseteq \mathbb{R}^n$ by $|A|$.

We consider three 
bounded functions 
satisfying 
\begin{equation}\label{limits}
\lim_{x\rightarrow \infty }L(x)=\lim_{x\rightarrow 0}S(x)=\lim_{x\rightarrow \infty }D(x)=0.
\end{equation}
Without loss of generality, we assume that $L$ and $D$ are non-increasing, while $S$ is non-decreasing. Moreover, since any dilation of a function satisfying a limit in (\ref{limits}) also satisfies the same limit, 
we omit universal constants appearing in the argument of these functions.

A measurable function $K:(\mathbb R^{n}\times \mathbb R^{n}) \setminus 
\{ (x,y)\in \mathbb R^{n}\times \mathbb R^{n} : x=y\} \to \mathbb C$ is a
\emph{compact Calder\'on-Zygmund kernel} if it is bounded on compact subsets of its domain
and there exist a function $\omega$ satisfying the Dini-type condition 
$$
\int_0^1\int_0^1\omega(st)\frac{ds}{s}\frac{dt}{t}<\infty 
$$
and bounded functions $L$, $S$, and $D$ satisfying \eqref{limits}
such that
\begin{equation}\label{smoothcompactCZ}
|K(x,y)-K(x',y')|
\lesssim
\omega\Big(\frac{|x-x'|+|y-y'|}{|x-y|}\Big)\frac{F_{K}(x,y)}{|x-y|^{n}},
\end{equation}
whenever $|x-x'|+|y-y'|\leq \frac{1}{2}|x-y|$ with
\begin{align}\label{LSDinF}
F_{K}(x,y)&
=L(|x-y|)S(|x-y|)D(|x+y|).
\end{align}
As shown in \cite{V2015}, inequality \eqref{smoothcompactCZ} and $\displaystyle {\lim_{|x-y|\rightarrow \infty} K(x,y)=0}$ imply that $K$ satisfies the following decay estimate
\begin{equation}\label{decaycompactCZ}
|K(x,y)|
\lesssim \frac{F_{K}'(x,y)}{|x-y|^{n}}
\end{equation}
whenever $x\neq y$, where $F_{K}'$ may be slightly different from the function in \eqref{LSDinF}, but it has a similar structure and it satisfies similar estimates.

For technical reasons, we will also use an alternative formulation of a compact Calder\'on-Zygmund kernel in which we substitute the function $F_K(x,y)$ of \eqref{smoothcompactCZ} 
with
\begin{equation}\label{DecaySub}
F_{K}(x,y,x',y')=L_{1}(|x-y|)S_{1}(|x-x'|+|y-y'|)D_{1}\Big(1+\frac{|x+y|}{1+|x-y|}\Big),
\end{equation}
where
$L_{1}$, $S_{1}$, and $D_{1}$ satisfy the limits in
(\ref{limits}). 
It was shown how this new condition can be obtained from \eqref{smoothcompactCZ} in \cite{V2015}. In general, we will omit the subindexes in the three factors of $F_K$, using the same notation as in \eqref{LSDinF}.

We work with \emph{Calder\'on-Zygmund operators} $T$ having compact extensions on $L^2(\mathbb{R}^n)$ and satisfying
\begin{equation}\label{kernelrep}
Tf(x) =\int_{\mathbb R^{n}} K(x,y)f(y)\, dy
\end{equation}
for compactly supported functions $f$ and $x \not \in \supp f$, where $K$ satisfies properties \eqref{smoothcompactCZ}, \eqref{decaycompactCZ}, and \eqref{DecaySub}
above.

Given two cubes $I, J \in \mathcal{D}$ with $\ell(I) \neq \ell(J)$, we denote the smaller of $I$ and $J$ by $I \wedge J$ and the larger of $I$ and $J$ by $I \vee J$. We define $\langle I,J\rangle$ to be the unique cube containing $I\cup J$ with the smallest possible side length and 
such that
$|c(I)|$ is minimum. Notice that $\langle I, J \rangle$ need not be dyadic. 
We also define the eccentricity and relative distance of $I$ and $J$ to be
$$
\ec(I,J):=\frac{\ell(I\wedge J)}{\ell(I\vee J)}\quad \text{and} \quad 
\rdist(I,J):=\frac{\ell(\langle I,J\rangle )}{\ell(I\vee J)}.
$$
Note that
\begin{align}\label{equivrdist}
\rdist(I,J) & \approx  1+\frac{|c(I)-c(J)|}{\ell(I)+\ell(J)}.
\end{align}
Given $I\in \mathcal D$, we denote the boundary of $I$ by $\partial I$,
and the inner boundary of $I$ by $\mathfrak{D}_{I}:=\displaystyle{\cup_{I'\in \child(I)}\partial I'}$.
When $J\subseteq 3I$, we define the inner relative distance of $J$ and $I$ by
$$
\inrdist(I,J):=1+\frac{\dist(J,{\mathfrak D}_{I})}{\ell(J)}.
$$

Given three cubes $I_{1},I_{2},$ and $I_{3}$, 
we denote
$$
F_{K}(I_{1}, I_{2}, I_{3}):=L(\ell(I_{1}))S(\ell(I_{2}))D(\hspace{-.03in}\rdist(I_{3},\mathbb B)) \quad \text{and} \quad F_{K}(I):=F(I,I,I),$$ where $\mathbb{B}:=[-\frac{1}{2},\frac{1}{2}]^n$. We define
\begin{align*}\label{Dtilde}
\tilde L(\ell(I)):=
\int_{0}^{1} \omega(t)L(\ell(t^{-1}I))\frac{dt}{t} \quad \text{and}\quad 
\tilde{D}(\hspace{-.03in}\rdist(I,\mathbb B)):=
\int_{0}^{1} W(t)D(\hspace{-.03in}\rdist(t^{-1}I,\mathbb B))\frac{dt}{t},
\end{align*}
where $W(t):=\int_0^t\omega(s)\frac{ds}{s}$. We also define the corresponding
$$
\tilde F_{K}(I_{1}, I_{2}, I_{3}):=\tilde L(\ell(I_{1}))S(\ell(I_{2}))\tilde D(\hspace{-.03in}\rdist(I_{3},\mathbb B)) \quad \text{and} \quad \tilde F_{K}(I):=\tilde F_{K}(I,I,I).$$


For a cube $Q$, let $Q^*$ be the cube such that $c(Q^*)=c(Q)$ and $\ell(Q^*)=5\ell(Q)$. For $Q \in \mathcal{D}$, we again write $h_Q$ for the Haar function adapted to $Q$, but now with respect to Lebesgue measure. Specifically, 
$$
    h_{Q}:=|Q|^{-\frac{1}{2}}(\mathbbm{1}_{Q}-2^{-n}\mathbbm{1}_{\widehat{Q}}).
$$
With this notation, $h_{Q}$ is supported on $\widehat{Q}$ and constant on $Q$ and on $\widehat{Q}\backslash Q$. 

Define the difference operator localized on a dyadic cube $Q$ as
\begin{equation}\label{Deltaincoord}
\Delta_{Q}f
:=\sum_{R\in \child(Q)}(\langle f\rangle_{R}-\langle f\rangle_{Q})\mathbbm{1}_{R},
\end{equation}
where now $\langle f\rangle_Q:= \frac{1}{|Q|}\int_{\mathbb{R}^n}f\, dm$. It is shown in \cite{V2019} that 
$$
\Delta_{Q}f
=\sum_{R\in \child(Q)}\langle f, h_{R}\rangle 
h_{R},
$$
where we write $\langle f, g\rangle:=\int_{\mathbb R^{n}} fg\, dm$. Thus by summing a telescopic sum, we get
\begin{align*}
\sum_{\substack{Q\in {\mathcal D}\\ 
2^{-N}\leq \ell(Q)\leq 2^N}}\sum_{R\in \child(Q)}\langle f, h_{R}\rangle h_R(x)
&=\sum_{\substack{Q\in {\mathcal D}\\ 
2^{-N}\leq \ell(Q)\leq 2^N}}\Delta_{Q}f(x)
=\langle f\rangle_{J}\mathbbm{1}_{J}(x)-\langle f\rangle_{I}\mathbbm{1}_{I}(x),
\end{align*}
where $I,J\in {\mathcal D}$ are such that $x\in J\subseteq I$, $\ell(J)=2^{-N}$ and $\ell(I)=2^{N+1}$.




We denote the wavelet father adapted to $Q$ as $\varphi_Q:=|Q|^{-1}\mathbbm{1}_{Q}$. 
Given a function $b\in \BMO$, 
the paraproduct operators associated with $b$ are defined as follows: 
$$
\Pi_{b}(f):=\sum_{I\in \mathcal D}\langle b, h_I\rangle
\langle f, \varphi_I\rangle h_I \quad \text{and} \quad \Pi^{*}_{b}(f):=\sum_{I\in \mathcal D}\langle b, h_I\rangle
\langle f,h_I\rangle \varphi_I.
$$
Note that the operator 
$$
\Pi^{*}_{P_M(b)}(f):=\sum_{I\in \mathcal D_M}\langle b, h_I\rangle
\langle f,h_I\rangle \varphi_I
$$
is of finite rank.



A linear operator $T$ 
satisfies the weak compactness condition
if there exists a bounded function $F_{W}$ satisfying 
$
{\displaystyle \lim_{\ell(Q)\rightarrow \infty }F_W(Q)
=\lim_{\ell(Q)\rightarrow 0 }F_W(Q)
=\lim_{c(Q)\rightarrow \infty }F_W(Q)=0}
$
such that 
\begin{equation}\label{restrictcompact2}
|\langle T\mathbbm{1}_{Q},\mathbbm{1}_{Q}\rangle |
\lesssim 
|Q|F_{W}(Q)
\end{equation}
for all $Q\in {\mathcal D}$. 

We define $\CMO(\mathbb R^{n})$ as the closure in $\BMO(\mathbb R^{n})$ of the space of continuous functions vanishing at infinity.

For positive integers $N$, define the \emph{projection operator} $P_N$ on lagom cubes by $$P_Nf:=\sum_{Q\in \mathcal{D}_N}\langle f,h_Q \rangle h_Q$$ 
and also
$$
    P_N^{\perp}f:= (I-P_N)f=\sum_{Q \in \mathcal{D}_N^c}\langle f,h_Q\rangle h_Q
$$ 
with convergence interpreted pointwise almost everywhere.

\begin{rem}
 To show that a linear operator
$T$ is compact on $L^2(\mathbb R^n)$, for instance, one can equivalently show that for every $\varepsilon>0$, there exists $N_0>0$ so that 
$$
    \|P_N^{\perp}Tf\|_{L^2(\mathbb R^n)}\lesssim \varepsilon\|f\|_{L^2(\mathbb R^n)}
$$
for all $N>N_0$ and all $f\in L^2(\mathbb R^n)$. 
\end{rem}

\subsection{Technical results}
\label{tech}
The following result is proved in \cite{V2019} in the particular case of $\omega(t)=t^{\delta}$ with $0<\delta\leq 1$.  The proof of this lemma is a straightforward modification of that contained in \cite{V2019}*{Proposition 8.2}.
\begin{lemma}
\label{tildeT}
Let $T$ be a linear operator associated with a compact Calder\'on-Zygmund kernel satisfying the weak compactness condition 
\eqref{restrictcompact2} and such that $T1, T^*1\in \CMO$.
If $\tilde T:=T-\Pi^{*}_{P_N(T^*1)}$, then
$$
	\|P_N^{\perp}\tilde T f\|_{L^{1,\infty}(\mathbb{R}^n)}\lesssim \sup_{Q \in \mathcal{D}_{N}^c} \varepsilon_Q \hskip5pt \|f\|_{L^1(\mathbb{R}^n)}
$$
for all $f \in L^1(\mathbb{R}^n)$. The coefficients $\varepsilon_Q$ are defined for $Q\in \mathcal{D}_N^c$ by
\begin{align*}
\varepsilon_Q:&=
\sum_{\substack{e\in \mathbb Z, m\in \mathbb N }}\omega(2^{-|e|})\frac{\omega(m^{-1})}{m}
\max_{\substack{R \in \mathcal{D}\\R\in Q_{e,m}}}\max_{i=1,2,3}F_i(Q,R), 
\end{align*}
where 
$
Q_{e,m}:=\{ R\in {\mathcal D}:\ell(Q)=2^{e}\ell(R) \quad\text{and}\quad m\leq  \rdist(Q,R)< m+1 \},
$
$\mathcal{D}_N^c(Q):=\mathcal {D}(Q)\cap \mathcal{D}_N^c$,
and
\begin{itemize}
\item[i)] when $\rdist (Q,R)> 3$,
$$
F_1(Q,R):=F_{K}(\langle Q,R\rangle ,Q\wedge R, \langle Q,R\rangle ),
$$

\item[ii)] 
$\rdist (Q,R)\leq 3$  
and $\inrdist(Q,R)>1$, 
$$
F_{2}(Q, R):=\tilde F_{K}( Q\wedge R ,Q\wedge R, \langle Q,R\rangle ),
$$
\item[iii)] and when $\rdist (Q,R)\leq 3$ and $\inrdist(Q,R)=1$, 
\begin{align*}
F_{3}(Q, R):=F_{2}(Q,R)+\tilde F_{K}(Q\wedge R) +\delta(Q,R)F_{W}(Q)
\end{align*}
with $\delta(Q,R)=1$ if $Q=R$ and zero otherwise, while
$$
F_{W}(Q)=\sup_{Q}\langle |T1-\langle T1\rangle_{Q}|\rangle_{Q}
+\langle |T^*1-\langle T^*1\rangle_{Q}|\rangle_{Q}, 
$$ 
and the supremum is taken over all cubes with sides parallel to the coordinate axes. 
\end{itemize}
\end{lemma}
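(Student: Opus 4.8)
The plan is to follow the argument of \cite{V2019}, which establishes this estimate for the Lipschitz modulus $\omega(t)=t^{\delta}$, and to replace the geometric-series estimates used there by the integral manipulations encoded in $\tilde{L}$, $\tilde{D}$ and in the summable weights $\omega(2^{-|e|})\,\omega(m^{-1})/m$ appearing in the definition of $\varepsilon_{Q}$; the double Dini hypothesis on $\omega$ is precisely what makes all the relevant series converge. First I would reduce the problem to estimating matrix coefficients. For $f\in L^{1}\cap L^{2}(\mathbb{R}^{n})$ one expands $f=\sum_{Q\in\mathcal{D}}\Delta_{Q}f$ by \eqref{Deltaincoord}, writes $P_{N}^{\perp}\tilde{T}f=\sum_{R\in\mathcal{D}_{N}^{c}}\langle\tilde{T}f,h_{R}\rangle h_{R}$, and is thus led to control the coefficients $\langle\tilde{T}h_{Q},h_{R}\rangle$ organized by their geometric configuration class $Q_{e,m}$, that is, by the eccentricity $2^{e}=\ell(Q)/\ell(R)$ and the relative distance $m\approx\rdist(Q,R)$; the extension from $L^{1}\cap L^{2}$ to all of $L^{1}$ is then routine by density once the weak estimate is in hand.

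Second, I would estimate $|\langle\tilde{T}h_{Q},h_{R}\rangle|$ by splitting into the three regimes built into the statement. When $\rdist(Q,R)>3$ the supports $\widehat{Q}$ and $\widehat{R}$ are separated, so the size bound \eqref{decaycompactCZ} applies directly and produces the factor $F_{1}(Q,R)$. When $\rdist(Q,R)\le 3$ and $\inrdist(Q,R)>1$, the smaller Haar function lies inside a single child of the larger cube, so one exploits its cancellation against the smoothness estimate \eqref{smoothcompactCZ}; integrating the modulus $\omega$ over the intermediate scales is exactly what upgrades $L$ and $D$ to $\tilde{L}$ and $\tilde{D}$ and yields $\tilde{F}_{2}(Q,R)$. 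The genuinely diagonal terms with $\inrdist(Q,R)=1$ are controlled through the weak compactness condition \eqref{restrictcompact2} together with paraproduct-type pieces coming from $T1$ and $T^{*}1$: subtracting the finite-rank operator $\Pi^{*}_{P_{N}(T^{*}1)}$ removes the lagom part of the $T^{*}1$ paraproduct, and what remains is dominated by the tail quantities $\big(|Q|^{-1}\sum_{R\in\mathcal{D}_{N}^{c}(Q)}\langle T1,h_{R}\rangle^{2}\big)^{1/2}$ and $\big(|Q|^{-1}\sum_{R\in\mathcal{D}_{N}^{c}(Q)}\langle T^{*}1,h_{R}\rangle^{2}\big)^{1/2}$, which tend to $0$ uniformly as $N\to\infty$ because $T1,T^{*}1\in\CMO$. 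Summing these contributions against $\omega(2^{-|e|})\,\omega(m^{-1})/m$ and using the Dini condition gives a bound of the form $|\langle\tilde{T}h_{Q},h_{R}\rangle|\lesssim\varepsilon_{Q\vee R}\,\ec(Q,R)^{1/2}\rdist(Q,R)^{-n}$, the crucial feature being that the coefficient is controlled by $\varepsilon_{Q\vee R}$, which is small whenever $Q\vee R\in\mathcal{D}_{N}^{c}$.

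Third, from these coefficient estimates I would deduce the two analytic bounds needed. An almost-orthogonality (Cotlar-Stein/Schur) argument, with the sum over $(e,m)$ dominated by the Dini integral, gives $\|P_{N}^{\perp}\tilde{T}\|_{L^{2}\to L^{2}}\lesssim\sup_{Q\in\mathcal{D}_{N}^{c}}\varepsilon_{Q}$. The weak-type $(1,1)$ estimate then follows from the Calder\'on-Zygmund decomposition $f=g+b$ at height $\lambda$: the good part is handled by Chebyshev's inequality and the $L^{2}$ bound, using $\|g\|_{L^{2}}^{2}\lesssim\lambda\|f\|_{L^{1}}$ together with the fact that the $\varepsilon_{Q}$ are uniformly bounded, so that $\big(\sup_{\mathcal{D}_{N}^{c}}\varepsilon_{Q}\big)^{2}\lesssim\sup_{\mathcal{D}_{N}^{c}}\varepsilon_{Q}$; each bad block $b_{j}$, supported on a cube $Q_{j}$ with mean zero, is estimated off $Q_{j}^{*}$ using the smoothness coefficient bounds of the previous step, and the exceptional set $\bigcup_{j}Q_{j}^{*}$ has measure $\lesssim\lambda^{-1}\|f\|_{L^{1}}$.

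I expect the main difficulty to be bookkeeping rather than a new idea. Concretely, one must check that the double Dini condition makes the series $\sum_{e\in\mathbb{Z},\,m\in\mathbb{N}}\omega(2^{-|e|})\frac{\omega(m^{-1})}{m}\max_{R\in Q_{e,m}}\max_{i}F_{i}(Q,R)$ converge and be dominated by a quantity still exhibiting the decay of $\tilde{L}$, $S$, and $\tilde{D}$ as $Q$ leaves the lagom region, and one must track the interaction of the truncation $P_{N}^{\perp}$ with the bad-function argument so that the final constant is $\sup_{Q\in\mathcal{D}_{N}^{c}}\varepsilon_{Q}$ rather than $\sup_{Q\in\mathcal{D}}\varepsilon_{Q}$. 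Additional care is required because $\{h_{Q}\}_{Q\in\mathcal{D}}$ is only a frame, and because $P_{N}^{\perp}\tilde{T}$ is not a classical Calder\'on-Zygmund kernel operator, so the off-support estimates for the bad part must be formulated directly through the Haar matrix coefficients truncated to scales in $\mathcal{D}_{N}^{c}$.
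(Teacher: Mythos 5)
The paper provides no proof of this lemma; it cites \cite{V2019} for the case $\omega(t)=t^\delta$ and declares the general Dini case a ``straightforward modification.'' Your outline is a plausible reconstruction of that referenced argument: expand along the Haar frame, organize the matrix coefficients $\langle\tilde T h_Q, h_R\rangle$ by the geometric classes $Q_{e,m}$, estimate them in the three regimes matching $F_1,F_2,F_3$ (cancellation plus smoothness for separated cubes, intermediate-scale integration giving $\tilde L,\tilde D$ in the nested case, weak compactness plus the $T1$, $T^*1$ paraproducts on the diagonal), deduce an $L^2$ bound via Schur/Cotlar, and pass to weak $(1,1)$ via the Calder\'on--Zygmund decomposition with the bad blocks controlled through the coefficient estimates. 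You also correctly identify the genuine subtleties: the frame rather than orthonormal basis, the interaction of the bad-function argument with the $P_N^\perp$ truncation, and Dini summability replacing geometric summability.

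One substantive imprecision to flag. The per-coefficient bound you write, $|\langle\tilde T h_Q, h_R\rangle|\lesssim \varepsilon_{Q\vee R}\,\ec(Q,R)^{1/2}\rdist(Q,R)^{-n}$, cannot serve as the input to the Schur test: there are $\sim m^{n-1}$ cubes $R$ at relative distance $m$ from $Q$ at a fixed scale, so $\sum_R \ec(Q,R)^{1/2}\rdist(Q,R)^{-n}$ has a $\sum_m m^{-1}$ tail and diverges. Moreover $\varepsilon_{Q\vee R}$ is already the aggregated quantity, containing the full sum over $(e,m)$, not a per-pair bound. The actual coefficient estimate must retain the modulus factors, roughly $|\langle\tilde T h_Q, h_R\rangle|\lesssim \omega(2^{-|e|})\,\omega(m^{-1})\,m^{-1}\,\ec(Q,R)^{n/2}\rdist(Q,R)^{-n}\max_i F_i(Q,R)$, and it is precisely this extra $\omega(m^{-1})/m$ decay --- produced by pairing one Haar function's cancellation against the kernel's smoothness condition --- that makes the sum over $R$ converge to something dominated by $\varepsilon_Q$. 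Your closing paragraph about the Dini condition shows you see this point; it simply did not make it into the displayed estimate, so I would call this a presentation slip rather than a missing idea.
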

\begin{rem}
We note that the definition of $\varepsilon_{Q}$
in Lemma \ref{tildeT}
follows from the reasoning used in 
\cite{V2019}*{Proposition 8.2} after incorporating the Dini-type function $w$, which in that paper is just the classical $w(t)=t^\delta$.

The expression $\varepsilon_Q$ 
gathers the contributions of all terms in the wavelet decomposition of the argument functions. As deduced from the proof of \cite{V2019}*{Proposition 8.2}, 
all these terms are positive and decrease with eccentricity $e$ and relative distance $m$ so that the contribution of all terms 
$\tilde F_K(I_1,I_2,I_3)$ with distinct cubes is comparable to the contribution of the corresponding terms with equal cubes. 
Then one can see that  
$\tilde F_K(Q)\lesssim \varepsilon_Q $
and 
${\displaystyle \lim_{N\rightarrow \infty} \sup_{Q \in \mathcal{D}_{N}^c} \varepsilon_Q =0}$.

 Finally, we also note that $Q\in \mathcal{D}_N^c$ with $\ell(Q)\leq 2^{-N}$ implies ${\mathcal D}(Q)\subseteq {\mathcal D}_N^c$, and also that
 $$
    \sup_{Q}\langle |T1-\langle T1\rangle_{Q}|\rangle_{Q}\approx \bigg(|Q|^{-1}\sum_{R\in {\mathcal D}_N^c(Q)}\langle T1,h_R\rangle^2
\bigg)^{\frac{1}{2}} \leq \| P_{N}^\perp (T1)\|_{\BMO}.
$$
\end{rem}


\begin{lemma}\label{paraT}
If $T$ is a linear operator associated to a compact Calder\'on-Zygmund kernel, $Q \in \mathcal D$, and $N>1$, then
\begin{align*}
|P_N^\perp T(f\mathbbm{1}_{\mathbb{R}^n\setminus Q^*})(x)-P_N^\perp T(f\mathbbm{1}_{\mathbb{R}^n\setminus Q^*})(x')|
\leq \bar\varepsilon_{Q}Mf(x),
\end{align*}
for all $f \in L^1(\mathbb{R}^n)$ and all  $x, x' \in Q$, where 
$\bar \varepsilon_Q := L(\ell(Q))S(\ell(Q))\tilde{ D}(\hspace{-.03in}\rdist(Q, \mathbb B))\leq \tilde F_K(Q)\leq \varepsilon_Q $ with 
$\varepsilon_Q$ as in Lemma \ref{tildeT}.
\end{lemma}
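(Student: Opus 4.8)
The plan is to use $P_N^\perp=I-P_N$ and estimate the oscillation of $Tg$ and of $P_NTg$ on $Q$ separately, where $g:=f\mathbbm{1}_{\mathbb R^n\setminus Q^*}$. Since $\dist(\supp g,Q)\geq 2\ell(Q)$, for every $z\in Q$ the representation \eqref{kernelrep} gives $Tg(z)=\int_{\mathbb R^n\setminus Q^*}K(z,y)f(y)\,dy$, which is well defined on $Q$. Writing
$$
P_N^\perp Tg(x)-P_N^\perp Tg(x')=\big(Tg(x)-Tg(x')\big)-\big(P_NTg(x)-P_NTg(x')\big),
$$
it suffices to bound each difference on the right by a constant multiple of $\bar\varepsilon_Q\,Mf(x)$.

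For the first difference I would run the classical Calder\'on-Zygmund oscillation estimate with the compact-kernel bounds \eqref{smoothcompactCZ}, \eqref{decaycompactCZ}, \eqref{DecaySub} replacing the standard ones. Decompose $\mathbb R^n\setminus Q^*$ into dyadic annuli $A_k:=\{y:|y-c(Q)|\approx 2^k\ell(Q)\}$, $k\geq 1$. On the $O_n(1)$ innermost annuli, where $|x-x'|$ can be comparable to $|x-y|$ and smoothness fails, use the size bound \eqref{decaycompactCZ} together with $\int_{A_k}|f|\lesssim (2^k\ell(Q))^n Mf(x)$ (valid since $A_k\subseteq c\,2^kQ\ni x$); on the remaining annuli use \eqref{smoothcompactCZ}--\eqref{DecaySub}, which contributes the factor $\omega(2^{-k})$. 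After the normalizations adopted in the excerpt---so that $L(2^k\ell(Q))\lesssim L(\ell(Q))$, $S_1(|x-x'|)\lesssim S(\ell(Q))$, and $D$ on $A_k$ is comparable to $D(\rdist(2^kQ,\mathbb B))$---summing the geometric series and using the Dini condition in the form $\sum_{k\geq 1}\omega(2^{-k})D(\rdist(2^kQ,\mathbb B))\lesssim\tilde D(\rdist(Q,\mathbb B))$ yields $|Tg(x)-Tg(x')|\lesssim L(\ell(Q))S(\ell(Q))\tilde D(\rdist(Q,\mathbb B))\,Mf(x)=\bar\varepsilon_Q\,Mf(x)$.

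For the second difference, $h_R$ is constant on $Q$ unless $\widehat R\subseteq Q$, so
$$
P_NTg(x)-P_NTg(x')=\sum_{\substack{R\in\mathcal D_N\\ R\subsetneq Q}}\langle Tg,h_R\rangle\big(h_R(x)-h_R(x')\big)
$$
is a finite sum in which each coefficient is well defined through \eqref{kernelrep} (as $\widehat R\subseteq Q^*$ is disjoint from $\supp g$), and for each generation $\ell(R)=2^{-j}\ell(Q)$, $j\geq 1$, at most $2^{n+1}$ terms are nonzero because $\widehat R$ must contain $x$ or $x'$. For such an $R$, the mean value zero of $h_R$ gives $|\langle Tg,h_R\rangle|\leq\|h_R\|_\infty|\widehat R|\sup_{z\in\widehat R}|Tg(z)-Tg(c(\widehat R))|$, and since $\|h_R\|_\infty\approx|R|^{-1/2}$ and $|h_R(x)-h_R(x')|\leq 2\|h_R\|_\infty$, each term is $\lesssim\sup_{z\in\widehat R}|Tg(z)-Tg(c(\widehat R))|$. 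As $z$ and $c(\widehat R)$ lie within distance $\lesssim\ell(R)$ of each other while $\supp g$ is at distance $\gtrsim\ell(Q)$, the annular argument above---now with $\ell(R)$ in the role of $|x-x'|$---bounds this oscillation by $\lesssim L(\ell(Q))S(\ell(R))\big(\sum_{k\geq 1}\omega(2^{-k}\ell(R)/\ell(Q))D(\rdist(2^kQ,\mathbb B))\big)Mf(x)$. Summing over the $\leq 2^{n+1}$ cubes in generation $j$, using $S(\ell(R))\leq S(\ell(Q))$, and then summing in $j$ via the substitution $l=j+k$ gives $\sum_{k\geq1}D(\rdist(2^kQ,\mathbb B))\sum_{l>k}\omega(2^{-l})\lesssim\sum_{k\geq1}W(2^{-k})D(\rdist(2^kQ,\mathbb B))\lesssim\tilde D(\rdist(Q,\mathbb B))$, again by the Dini condition, so that $|P_NTg(x)-P_NTg(x')|\lesssim\bar\varepsilon_Q\,Mf(x)$. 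Adding the two estimates proves the lemma. The main obstacle is this last step: the number of scales $R\subsetneq Q$ grows with $N$, and a bound uniform in $N$ is possible only because placing $\widehat R$ deep inside $Q$ produces the extra gain $\omega(\ell(R)/\ell(Q))$, whose summability over scales is exactly the Dini condition on $\omega$.
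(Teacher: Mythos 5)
Your proposal is correct in substance, but it takes a slightly different route from the paper. The paper does not split $P_N^\perp$ as $I-P_N$; instead it expands $P_N^\perp Tg$ (with $g:=f\mathbbm{1}_{\mathbb R^n\setminus Q^*}$) directly as a Haar series over $R\in\mathcal D_N^c$, observes that the difference $h_R(x)-h_R(x')$ forces $\widehat R\subseteq Q$ and $x\in\widehat R$ or $x'\in\widehat R$, and then bounds $\langle Tg,h_R\rangle$ via the vanishing mean of $h_R$, the annular decomposition $\bigcup_k 2^{k+1}Q^*\setminus 2^kQ^*$, and the kernel smoothness condition, obtaining a factor $\omega(t\,\ell(\widehat R)/\ell(Q))$ before summing over both the annulus index $k$ and the Haar scale $j$ to produce $\tilde D$. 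In effect, the paper handles the entire operator in one pass, and the extra gain $\omega(\ell(\widehat R)/\ell(Q))$ at deeper scales is available uniformly, making the double sum converge by the Dini condition without any further assumption on $\omega$.

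Your split $P_N^\perp = I-P_N$ is legitimate: $Tg$ is pointwise well defined on $Q$ since $\dist(Q,\supp g)\gtrsim\ell(Q)$, and your identification of the cubes contributing to $P_NTg(x)-P_NTg(x')$ is correct (and in fact your count of $2^{n+1}$ per generation is more precise than the paper's ``at most two,'' though the discrepancy is a harmless universal constant). For the $P_NTg$ piece you recover essentially the paper's estimate. The one place where your argument is genuinely weaker is the $I$ piece: the direct oscillation estimate for $Tg(x)-Tg(x')$ yields the factor $\sum_k\omega(2^{-k})D(\hspace{-.03in}\rdist(2^kQ,\mathbb B))$ rather than $\sum_k W(2^{-k})D(\hspace{-.03in}\rdist(2^kQ,\mathbb B))\approx\tilde D(\hspace{-.03in}\rdist(Q,\mathbb B))$, so your conclusion requires the additional inequality $\omega\lesssim W$. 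This holds under the standard doubling assumption $\omega(2t)\lesssim\omega(t)$ (e.g.\ for $\omega(t)=t^\delta$, and in fact the paper's own Riemann-sum step $\sum_j\omega(t2^{-j})\approx\int_0^1\omega(ts)\frac{ds}{s}$ implicitly makes a similar assumption), but it is not a formal consequence of the Dini-type condition alone. You should state this hypothesis explicitly, or fold the $Tg$ term into the Haar expansion as in the paper, where the gain from the innermost scale $\widehat R=Q$ already yields $W$ rather than $\omega$.
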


\begin{proof} 
By definition
\begin{align}\label{PTout}
|P_N^\perp T(f\mathbbm{1}_{\mathbb{R}^n\setminus Q^*})(x)&-P_N^\perp T(f\mathbbm{1}_{\mathbb{R}^n\setminus Q^*})(x')|
\leq \sum_{R\in {\mathcal D}_N^c}|
\langle T(f\mathbbm{1}_{\mathbb{R}^n\setminus Q^*}), h_R\rangle | |h_R(x)-h_R(x')|.
\end{align}
For $R\in {\mathcal D}$ such that $\widehat{R}\cap Q=\emptyset$, we have $h_R(x)=h_R(x')=0$, 
while if $Q\subsetneq \widehat{R}$ we have 
$h_R(x)=h_R(x')$, and so the corresponding terms in \eqref{PTout} are zero. 
On the other hand, for $\widehat{R}\subseteq Q$, 
we have that $h_R(x)-h_R(x')\neq 0$ implies $x\in \widehat{R}$ or $x'\in \widehat{R}$. Moreover, in that case we have $|h_R(x)-h_R(x')|\lesssim |R|^{-\frac{1}{2}}$. 

Now, since  $\widehat{R}\subseteq Q$ implies that $\widehat{R}$ does not intersect $\mathbb R^n\setminus Q^*$, 
we can use the integral representation of $T$ and the mean zero property of $h_R$ to write
\begin{align*}
\langle T(f\mathbbm{1}_{\mathbb{R}^n\setminus Q^*}),h_R\rangle
&=\int_{\widehat{R}}\int_{\mathbb{R}^n\setminus Q^*}f(y)h_R(z)(K(z,y)-K(c(\widehat{R}),y))dy dz.
\end{align*}
Since $|x-x'|\leq \ell(Q)\leq \frac{1}{2}|x-y|$ for all $y\in \mathbb{R}^n\setminus Q^*$, we can use the smoothness condition of the kernel to write
\begin{align*}
|\langle T(f\mathbbm{1}_{\mathbb{R}^n\setminus Q^*}),h_R\rangle|
&\leq \int_{\widehat{R}}\int_{\mathbb{R}^n\setminus Q^*}|f(y)||h_R(z)||K(z,y)-K(c(\widehat{R}),y)|\,dy dz\\
&\leq \int_{\widehat{R}} |h_R(z)|\sum_{k=0}^{\infty} \int_{2^{k+1}Q^*\setminus 2^kQ^*}
\omega\Big(\frac{|z-c(\widehat{R})|}{|z-y|}\Big)
\frac{F_K(z,c(\widehat{R}),y)}{|z-y|^{n}}|f(y)|\,dydz,
\end{align*}
where
\begin{align*}
F_{K}(z,c(\widehat{R}),y):= L(|z-y|)S(|z-c(\widehat{R})|)D\Big(1+\frac{|z+y|}{1+|z-y|}\Big).
\end{align*}
Since $\ell(Q)\leq 2^{k-1}\ell(Q^*)\leq |z-y|$ and $|z-c(\widehat{R})|\leq \ell(\widehat{R})/2=\ell(R)\leq \ell (Q)$, we have
$L(|z-y|)\leq L(\ell(Q))$ and $S(|z-c(\widehat{R})|)\leq S(\ell(Q))$.

To deal with $L$, we first note that
$$2^{k}\ell(Q)\leq 2^{k-1}\ell(Q^*)\leq |z-y|\leq 2^{k}\ell(Q^*)= 2^{k}5\ell(Q),$$ 
that is, $|z-y|\approx 2^{k}\ell(Q)$.
Using this and 
$|z|\leq \frac{1}{2}(|z-y|+|z+y|)$, we have
$$
1+\frac{|z|}{1+2^{k}\ell(Q)}
\lesssim 1+\frac{|z|}{1+|z-y|}
\leq \frac{3}{2}\Big(1+\frac{|z+y|}{1+|z-y|}\Big).
$$
Moreover, 
since $|z-c(Q)|\leq \ell(Q)/2$, we also have 
$1+\frac{|c(Q)|}{1+2^k\ell(Q)}\leq \frac{5}{4}\big(1+\frac{|z|}{1+2^k\ell(Q)}\big)$. Using this and \eqref{equivrdist}, we have
\begin{align*}
1+\frac{|z|}{1+2^{k}\ell(Q)}
&\gtrsim 1+\frac{|c(2^{k} Q)|}{1+2^{k}\ell(Q)}
\gtrsim \rdist (2^{k} Q,\mathbb B).
\end{align*}
Then 
\begin{align*}
F_{K}(z,c(\widehat{R}),y)
&\leq L(\ell(Q))
S(\ell(Q))D(\hspace{-.03in}\rdist (2^k Q,\mathbb B))
=F_{K}(Q,Q,2^k Q).
\end{align*}
Using previous estimates together with the facts that $|z-c(\widehat{R})|\leq \ell(\widehat{R})$, $2^k\ell(Q)\lesssim |z-y|$, and 
$\| h_R\|_{L^1(\mathbb R^n)}\lesssim |R|^{\frac{1}{2}}$, 
we get
\begin{align*}
|\langle T(f\mathbbm{1}_{\mathbb{R}^n\setminus Q^*}),h_R\rangle|
&\lesssim L(\ell(Q))S(\ell(Q))
\int_{\widehat{R}} |h_R(z)|dz
\\
&\hskip50pt \sum_{k=0}^{\infty} \omega\Big(\frac{\ell(\widehat{R})}{2^k\ell(Q)}\Big) 
D(\hspace{-.03in}\rdist (2^{k} Q,\mathbb B)) \frac{1}{|2^{k+1}Q^*|}\int_{2^{k+1}Q^*} |f(y)| dy\\
&\lesssim L(\ell(Q))S(\ell(Q))|R|^{\frac{1}{2}}
\sum_{k=0}^{\infty} \omega\Big(2^{-k}\frac{\ell(\widehat{R})}{\ell(Q)}\Big) 
D(\hspace{-.03in}\rdist (2^{k} Q,\mathbb B)) Mf(x)\\
&\lesssim |R|^{\frac{1}{2}}L(\ell(Q))S(\ell(Q))\int_{0}^{1} \omega\Big(t\frac{\ell(\widehat{R})}{\ell(Q)}\Big)D(\hspace{-.03in}\rdist(t^{-1}Q,\mathbb B))\frac{dt}{t}Mf(x).
\end{align*}

Now we parametrize all dyadic cubes $R\in {\mathcal D}_N^c$ such that $\widehat{R}\subseteq Q$ and $x\in \widehat{R}$ or $x'\in \widehat{R}$ by length $\ell(R_j)=2^{-j}\ell(Q)$. We note that there are at most two such cubes for each fixed $j$, one containing $x$ and another one containing $x'$. By summing over all these cubes, we finally get 
\begin{align*}
|P_N^\perp T(f\mathbbm{1}_{\mathbb{R}^n\setminus Q^*})(x)&-P_N^\perp T(f\mathbbm{1}_{\mathbb{R}^n\setminus Q^*})(x')|
\lesssim \sum_{j=0}^{\infty }
|\langle T(f\mathbbm{1}_{\mathbb{R}^n\setminus Q^*}), h_{R_j}\rangle | |R_j|^{-\frac{1}{2}}
\\
&\lesssim L(\ell(Q))S(\ell(Q))\sum_{j=0}^\infty\int_{0}^{1}\omega(t2^{-j})D(\hspace{-.03in}\rdist(t^{-1}Q,\mathbb B))\frac{dt}{t}Mf(x)
\\
&\lesssim L(\ell(Q))S(\ell(Q))\int_{0}^{1}\int_{0}^{1} \omega(ts)D(\hspace{-.03in}\rdist(t^{-1}Q,\mathbb B))\frac{dt}{t}\frac{ds}{s}Mf(x)
\\
&\leq \bar \varepsilon_{Q}Mf(x).
\end{align*}
\end{proof}

We can also prove the following result using similar ideas. 
\begin{cor}
\label{ParaproductDifferenceBound}
If $b \in \CMO(\mathbb R^n)$, $Q \in \mathcal D$, and $N>1$, then
\begin{align*}
|P_N^{\perp}\Pi^*_b(f\mathbbm{1}_{\mathbb{R}^n\setminus Q^*})(x)-P_N^{\perp}\Pi^*_b(f\mathbbm{1}_{\mathbb{R}^n\setminus Q^*})(x')|
\leq \bar\varepsilon_{Q}Mf(x),
\end{align*}
for all $f \in L^1(\mathbb{R}^n)$ and all  $x, x' \in Q$, where 
$\bar \varepsilon_Q := L(\ell(Q))S(\ell(Q))\tilde D(\hspace{-.03in}\rdist(Q, \mathbb B))\leq \varepsilon_Q $
with $L(t)=\| P_{{\mathcal D}_{t}^{c}} b\|_{\BMO}$, 
$S(t)=\| P_{{\mathcal D}_{t^{-2/3}}^{c}} b\|_{\BMO}
+(1+\| b\|_{\BMO})^{\frac{1}{2}}(\frac{t}{1+t})^{\frac{2}{3}}$, 
and $D(t)=\| P_{{\mathcal D}_{\log t}^{c}} b\|_{\BMO}$.
We note that, using the ceiling function notation, $\mathcal D_{t}$ actually denotes $\mathcal D_{\lceil{t}\rceil}$.
\end{cor}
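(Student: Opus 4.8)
The plan is to mimic the proof of Lemma \ref{paraT} almost verbatim, replacing the Calder\'on-Zygmund kernel estimates by the corresponding analysis for the paraproduct $\Pi^*_b$. First I would observe that
$$
\Pi^*_b(f\mathbbm{1}_{\mathbb{R}^n\setminus Q^*}) = \sum_{I\in\mathcal D}\langle b,h_I\rangle\langle f\mathbbm{1}_{\mathbb{R}^n\setminus Q^*},h_I\rangle\varphi_I,
$$
and expand the difference $P_N^\perp\Pi^*_b(f\mathbbm{1}_{\mathbb{R}^n\setminus Q^*})(x) - P_N^\perp\Pi^*_b(f\mathbbm{1}_{\mathbb{R}^n\setminus Q^*})(x')$ in the frame $\{h_R\}_{R\in\mathcal D_N^c}$ exactly as in \eqref{PTout}. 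As in Lemma \ref{paraT}, the only terms that survive are those with $\widehat R\subseteq Q$ and $x\in\widehat R$ or $x'\in\widehat R$, for which $|h_R(x)-h_R(x')|\lesssim |R|^{-1/2}$; so it suffices to bound $|\langle \Pi^*_b(f\mathbbm{1}_{\mathbb{R}^n\setminus Q^*}),h_R\rangle|$ by $|R|^{1/2}\bar\varepsilon_Q Mf(x)$ times a summable-in-$j$ factor, where $\ell(R_j)=2^{-j}\ell(Q)$.

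Next I would compute $\langle \Pi^*_b(f\mathbbm{1}_{\mathbb{R}^n\setminus Q^*}),h_R\rangle$. Since $h_R$ is essentially orthogonal to the $\varphi_I$ except when the cube $I$ interacts appropriately with $R$, and since $\widehat R\subseteq Q$ while $\supp(f\mathbbm{1}_{\mathbb{R}^n\setminus Q^*})\subseteq \mathbb{R}^n\setminus Q^*$ is far from $\widehat R$, one gets a pairing of the form $\sum_I \langle b,h_I\rangle \langle f\mathbbm{1}_{\mathbb{R}^n\setminus Q^*},\varphi_I\rangle\langle\varphi_I,h_R\rangle$ in which the relevant $I$'s are those dyadic cubes containing $\widehat R$; the average $\langle f\mathbbm{1}_{\mathbb{R}^n\setminus Q^*},\varphi_I\rangle$ over such a large cube $I\supseteq Q^*$ is controlled by $Mf(x)$ up to an annular decay in $\ell(I)/\ell(Q)$, while $\langle b,h_I\rangle$ contributes, after summing over $I$, a $\BMO$-type quantity $\|P^\perp b\|_{\BMO}$ restricted to the appropriate scale and location of $R$. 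Collecting these pieces and translating the restrictions ``$\ell(I)$ large'' and ``$c(I)$ far from $\mathbb B$'' into projections $P_{{\mathcal D}_t^c}b$, one obtains precisely the three factors $L(t)=\|P_{{\mathcal D}_t^c}b\|_{\BMO}$, $S(t)=\|P_{{\mathcal D}_{t^{-2/3}}^c}b\|_{\BMO}+(1+\|b\|_{\BMO})^{1/2}(t/(1+t))^{2/3}$, and $D(t)=\|P_{{\mathcal D}_{\log t}^c}b\|_{\BMO}$ as in the statement; the extra $S$-term accounts for the contribution of cubes $I$ of intermediate size where only a polynomial decay, not a $\BMO$-projection gain, is available. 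Finally, summing over $j$ and using the Dini-type integral as in Lemma \ref{paraT} yields the bound $\bar\varepsilon_Q Mf(x)$.

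I would also need to check that $b\in\CMO(\mathbb{R}^n)$ makes $L$, $S$, $D$ satisfy the limits \eqref{limits}: $L(t)\to 0$ as $t\to\infty$ and $D(t)\to 0$ as $t\to\infty$ follow from $P_{{\mathcal D}_t^c}b\to 0$ in $\BMO$ as $t\to\infty$, which is exactly the content of $b\in\CMO$ (the lagom truncations converge to $b$ in $\BMO$), while $S(t)\to 0$ as $t\to 0$ follows from the explicit second term $(t/(1+t))^{2/3}\to 0$ together with $\|P_{{\mathcal D}_{t^{-2/3}}^c}b\|_{\BMO}\le\|b\|_{\BMO}<\infty$ being dominated — here one uses that the extra polynomial term is inserted precisely so that no $\CMO$-cancellation is needed at small scales. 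The inequality $\bar\varepsilon_Q\le\varepsilon_Q$ is then inherited from the corresponding inequality for the operator $T$ replaced by its paraproduct part, or verified directly from the definition of $\varepsilon_Q$ via the $\langle T^*1,h_R\rangle$ terms.

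The main obstacle I anticipate is the bookkeeping in the second step: correctly identifying which triples $(I,R)$ contribute to $\langle\varphi_I,h_R\rangle$ given the constraint $\widehat R\subseteq Q$ and $\supp(f\mathbbm{1}_{\mathbb{R}^n\setminus Q^*})$ away from $Q^*$, and then repackaging the resulting sum of $\langle b,h_I\rangle$'s over a tower of cubes into a single $\BMO$-projection estimate with the three decay factors in exactly the form claimed — in particular getting the somewhat unusual exponents $t^{-2/3}$ and the $(t/(1+t))^{2/3}$ correction to line up. Everything else is a routine adaptation of the proof of Lemma \ref{paraT}.
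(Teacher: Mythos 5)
Your route is genuinely different from the paper's and it does not quite work as sketched. The paper's actual proof is a one-liner: it cites \cite{V2015}, which shows that when $b\in\CMO$ the paraproduct $\Pi^*_b$ is itself a Calder\'on-Zygmund operator with a compact CZ kernel whose constant is exactly $L(|x-y|)S(|x-y|)D(|x+y|)$ with the specific $L,S,D$ in the statement. With that kernel bound in hand, one simply reruns the kernel-side argument of Lemma~\ref{paraT} verbatim, using the integral representation off the support and the mean-zero of $h_R$. You instead propose a direct Haar-side computation, which is a separate idea, not an adaptation of Lemma~\ref{paraT}.

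There are two concrete problems with the Haar-side sketch. First, the expansion you wrote,
\[
\sum_I \langle b,h_I\rangle\,\langle f\mathbbm{1}_{\mathbb{R}^n\setminus Q^*},\varphi_I\rangle\,\langle\varphi_I,h_R\rangle,
\]
has the wrong bilinear structure: since $\Pi^*_b(g)=\sum_I\langle b,h_I\rangle\langle g,h_I\rangle\varphi_I$, the correct pairing is
\[
\langle\Pi^*_b(f\mathbbm{1}_{\mathbb{R}^n\setminus Q^*}),h_R\rangle
=\sum_I\langle b,h_I\rangle\,\langle f\mathbbm{1}_{\mathbb{R}^n\setminus Q^*},h_I\rangle\,\langle\varphi_I,h_R\rangle .
\]
Second, your claim that ``the relevant $I$'s are those dyadic cubes containing $\widehat R$'' is backwards: $h_R$ has mean zero and is supported in $\widehat R$, and $\varphi_I=|I|^{-1}\mathbbm{1}_I$ is constant on $\widehat R$ whenever $I\supseteq\widehat R$ or $I\cap\widehat R=\emptyset$; so $\langle\varphi_I,h_R\rangle\neq 0$ forces $I\subsetneq\widehat R$. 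But then $\widehat I\subseteq\widehat R\subseteq Q\subsetneq Q^*$, while $h_I$ is supported in $\widehat I$, so $\langle f\mathbbm{1}_{\mathbb{R}^n\setminus Q^*},h_I\rangle=0$. In other words, on the Haar side every term in the sum dies, and the whole difference you are trying to estimate is identically zero. The ``tower of cubes $I\supseteq\widehat R$'', the repackaging into $\BMO$-projections, and the discussion of the exponents $t^{-2/3}$ and $(t/(1+t))^{2/3}$ are all estimating a quantity that vanishes; those exponents are artifacts of the kernel computation carried out in \cite{V2015} and cannot be recovered from the bookkeeping you describe. The efficient (and intended) proof is to quote that kernel estimate and repeat Lemma~\ref{paraT}, rather than to re-derive it from the Haar expansion.
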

\begin{proof}

It was shown in \cite{V2015} that if $b \in \CMO$, then the paraproduct operator $\Pi_b^*$ is associated to a compact Calder\'on-Zygmund kernel with constant given by 
\begin{align*}
&\| P_{{\mathcal D}_{|x-y|}^{c}} b\|_{\BMO}
(\| P_{{\mathcal D}_{|x-y|^{-2/3}}^{c}} b\|_{\BMO}
+(1+\| b\|_{\BMO})^{\frac{1}{2}}\min(1,|x-y|^{\frac{2}{3}}))
\| P_{{\mathcal D}_{\log|x+y|}^{c}} b\|_{\BMO}
\\
&=L(|x-y|)S(|x-y|)D(|x+y|).
\end{align*}
Similar reasoning to that developed in Lemma \ref{paraT} yields the result.
\end{proof}


As seen in \cite{PPV2017}, $P_N$ is bounded on $\CMO$. Then
the hypothesis $T^*1\in \CMO$ implies 
that also $P_NT^*1 \in \CMO$. In fact, 
$\| P_NT^*1\|_{\BMO}
\leq \| T^*1\|_{\BMO}$
and 
$\| P_M^\perp(P_NT^*1)\|_{\BMO}=0$ for all $M>N$. 
This justifies the expression  
$\Pi^{*}_{P_N(T^*1)}$ in the next result, which follows from Lemma \ref{paraT} and Corollary \ref{ParaproductDifferenceBound}. 
\begin{cor}
\label{paratildeT}
If $T$ is a linear operator associated with a compact Calder\'on-Zygmund kernel, $\tilde T:=T-\Pi^{*}_{P_N(T^*1)}$, and $Q \in \mathcal{D}$, then
\begin{align*}
|P_N^\perp \tilde T(f\mathbbm{1}_{\mathbb{R}^n\setminus Q^*})(x)-P_N^\perp \tilde T(f\mathbbm{1}_{\mathbb{R}^n\setminus Q^*})(x')|
\leq \bar\varepsilon_{Q}Mf(x)
\end{align*}
 for all $f \in L^1(\mathbb{R}^n)$ and all  $x, x' \in Q$, where $\bar \varepsilon_Q := L(\ell(Q))S(\ell(Q))\tilde D(\rdist(Q, \mathbb B))\leq \varepsilon_Q $. 
\end{cor}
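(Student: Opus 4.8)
The plan is to reduce the claim to the two oscillation estimates already in hand, using linearity. Set $g := f\mathbbm{1}_{\mathbb{R}^n\setminus Q^*}$. Since $\tilde T = T - \Pi^{*}_{P_N(T^*1)}$, we have $P_N^\perp\tilde T g = P_N^\perp T g - P_N^\perp\Pi^{*}_{P_N(T^*1)}g$, so for $x, x' \in Q$ the quantity $|P_N^\perp\tilde T g(x) - P_N^\perp\tilde T g(x')|$ is bounded by the sum of $|P_N^\perp T g(x) - P_N^\perp T g(x')|$ and $|P_N^\perp\Pi^{*}_{P_N(T^*1)}g(x) - P_N^\perp\Pi^{*}_{P_N(T^*1)}g(x')|$. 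First I would bound the former term directly by Lemma \ref{paraT}, obtaining the desired form with coefficient $L(\ell(Q))S(\ell(Q))\tilde D(\rdist(Q,\mathbb{B})) \leq \tilde F_K(Q) \leq \varepsilon_Q$, where here $L, S, D$ are the kernel factors of $T$.

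For the latter term I would invoke Corollary \ref{ParaproductDifferenceBound} with $b := P_N(T^*1)$. This is legitimate because $P_N$ is bounded on $\CMO$ (see \cite{PPV2017}), so the hypothesis $T^*1 \in \CMO$ forces $b \in \CMO$ with $\|b\|_{\BMO} \leq \|T^*1\|_{\BMO}$. Corollary \ref{ParaproductDifferenceBound} then bounds $|P_N^\perp\Pi^{*}_b g(x) - P_N^\perp\Pi^{*}_b g(x')|$ by a constant times $Mf(x)$ times a product of the kernel factors of $\Pi^{*}_b$, namely $\|P_{\mathcal{D}_{\ell(Q)}^c}b\|_{\BMO}$, the corresponding $S$-factor at $\ell(Q)$, and $\tilde D$ at $\rdist(Q,\mathbb{B})$. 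What remains is to dominate this product by $\varepsilon_Q$. Here I would use the identity $\|P_M^\perp b\|_{\BMO} = 0$ for $M > N$, which collapses the $b$-truncations into pieces supported, in the Haar frame, on $\mathcal{D}_N^c$; the square-function description of dyadic $\BMO$ adapted to the frame $\{h_R\}_{R\in\mathcal{D}}$ of Remark \ref{HaarFrame} then identifies each such factor, restricted to $Q$, with a multiple of $\big(|Q|^{-1}\sum_{R\in\mathcal{D}_N^c(Q)}\langle T^*1, h_R\rangle^2\big)^{1/2}$, which is precisely one of the summands defining $\varepsilon_Q$ in Lemma \ref{tildeT}. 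Absorbing the resulting constant (and the factor $2$ from the triangle inequality) into the implicit constant, and writing $\bar\varepsilon_Q := L(\ell(Q))S(\ell(Q))\tilde D(\rdist(Q,\mathbb{B}))$, completes the argument, $M$ being the Hardy--Littlewood maximal operator throughout.

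The hard part will be this last matching step. Concretely, one must check that the near-orthogonality of the frame --- $\langle h_R, h_{R'}\rangle \neq 0$ only when $\widehat R = \widehat{R'}$ --- keeps $\sum_R |\langle P_N(T^*1), h_R\rangle|^2$ comparable to the corresponding sum for $T^*1$, invoke the square-function (equivalently $H^1$--$\BMO$) equivalence for this frame, and use the bookkeeping fact from the remark following Lemma \ref{tildeT} that $\ell(Q) \leq 2^{-N}$ forces $\mathcal{D}(Q) \subseteq \mathcal{D}_N^c$, so that the truncation scales $\mathcal{D}_{\ell(Q)}^c$ and the index set $\mathcal{D}_N^c(Q)$ line up. Everything else is routine: the decomposition of $\tilde T$ and the triangle inequality, together with the two oscillation bounds, which are exactly Lemma \ref{paraT} and Corollary \ref{ParaproductDifferenceBound}.
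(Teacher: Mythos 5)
Your decomposition is exactly the paper's: write $\tilde T = T - \Pi^{*}_{P_N(T^*1)}$, apply the triangle inequality, control the $T$-piece by Lemma \ref{paraT} and the paraproduct piece by Corollary \ref{ParaproductDifferenceBound} with $b := P_N(T^*1)$, which lies in $\CMO$ because $P_N$ is bounded on $\CMO$. That is precisely the content of the sentence preceding the corollary in the paper, so the core of the argument is right.

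The one place where your account drifts is the ``hard part'' you single out. The inequality $\bar\varepsilon_Q \leq \varepsilon_Q$ for the paraproduct is already part of the \emph{statement} of Corollary \ref{ParaproductDifferenceBound}; once you cite that corollary you do not need to re-derive it, and the paper does not re-derive it here either. Moreover, the mechanism you sketch for it does not run in the right direction. The paraproduct kernel factor $L(\ell(Q)) = \|P_{\mathcal{D}_{\ell(Q)}^c}b\|_{\BMO}$ is a supremum over all dyadic cubes, whereas the $T^*1$ summand in $\varepsilon_Q$ is the single-cube quantity $\bigl(|Q|^{-1}\sum_{R\in\mathcal{D}_N^c(Q)}\langle T^*1, h_R\rangle^2\bigr)^{1/2}$; the square-function characterization gives the latter $\lesssim$ the former, as the remark after Lemma \ref{tildeT} records, not the reverse. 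So attempting to dominate the BMO-norm factor by this single-cube term would flip an inequality. The paper's intended bookkeeping (cf. the paragraph after Corollary \ref{paratildeT}) is instead that the kernel of $\tilde T$, incorporating both $T$ and the paraproduct, is itself a compact Calder\'on--Zygmund kernel whose factor $F_K$ feeds into the $F_i(Q,R)$ terms in $\varepsilon_Q$, so that $\bar\varepsilon_Q \leq \tilde F_K(Q) \leq \varepsilon_Q$ holds through the kernel part of $\varepsilon_Q$ rather than the $T^*1$ part. If you simply cite Corollary \ref{ParaproductDifferenceBound} as a black box and do not attempt the ``matching,'' your proof is complete and matches the paper.
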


A consequence of the work in Lemma \ref{paraT} and Corollary \ref{ParaproductDifferenceBound} is that 
the kernels of both $T$ and $\Pi^{*}_{P_N(T^*1)}$ share similar estimates. In the next section we denote by $K$ the kernel of $\tilde T$, which satisfies the properties of a compact Calder\'on-Zygmund kernel \eqref{smoothcompactCZ}, \eqref{decaycompactCZ}, and \eqref{DecaySub}. 

\subsection{Sparse domination for compact Calder\'on-Zygmund operators}

\begin{thm}
\label{domtilde}
Let $T$ be a linear operator associated to a compact Calder\'on-Zygmund kernel satisfying the weak compactness condition 
\eqref{restrictcompact2} and $T1, T^*1\in \CMO$ and let $\tilde T := T-\Pi^{*}_{P_N(T^*1)}$. 
For every
$\varepsilon >0$ there exists $N_0>0$ such that for all $N>N_0$ and 
every compactly supported $f \in L^1(\mathbb{R}^n)$, there is  
a sparse family of cubes ${\mathcal S}$ such that  
$$
	|P_N^\perp \tilde Tf(x)|\lesssim \varepsilon \sum_{R \in \mathcal{S}}  \langle |f| \rangle_{R^*} \mathbbm{1}_R(x) =: \varepsilon S|f|(x)
$$
for almost every $x \in \mathbb{R}^n$.
\end{thm}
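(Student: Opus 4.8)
The plan is to transplant the proof of Theorem~\ref{HaarSparseBound} to the present setting, replacing the auxiliary operators $T^{\max}$ and $M_\varepsilon$ used there by a grand maximal truncation of $P_N^{\perp}\tilde T$ and the ordinary maximal operator $M$, and using Lemma~\ref{tildeT} and Corollary~\ref{paratildeT} as the substitutes for Lemmas~\ref{HaarTruncationWeakType} and \ref{MaxWeakType}. Fix $\varepsilon>0$. Since $T$ is compact on $L^2(\mathbb R^n)$ and $\Pi^{*}_{P_N(T^*1)}$ has finite rank, $\tilde T$ is compact on $L^2(\mathbb R^n)$, and as $P_N^{\perp}\to 0$ strongly on $L^2(\mathbb R^n)$ (the Haar frame sum converges) we get $\|P_N^{\perp}\tilde T\|_{L^2\to L^2}\to 0$. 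Together with $\lim_{N\to\infty}\sup_{Q\in\mathcal D_N^c}\varepsilon_Q=0$, this lets us fix $N_0$ so that $\|P_N^{\perp}\tilde T\|_{L^2\to L^2}<\varepsilon$ and $\sup_{Q\in\mathcal D_N^c}\varepsilon_Q<\varepsilon$ whenever $N>N_0$; every bound below will carry this factor $\varepsilon$. We may assume $f\ge 0$ is compactly supported. The one structural observation that drives everything is that $\langle \tilde Tf,h_R\rangle$ enters $P_N^{\perp}\tilde T$ only through cubes $R\in\mathcal D_N^c$, so that only the coefficients $\varepsilon_R,\bar\varepsilon_R$ with $R\in\mathcal D_N^c$ — which are $\lesssim\varepsilon$ — ever appear.

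First I would prove a weak-type bound for the grand maximal truncated operator
$$
\mathcal M_{\tilde T,N}f(x):=\sup_{\substack{Q\in\mathcal D_N^c\\ Q\ni x}}\ \esssup_{\xi\in Q}\ \bigl|P_N^{\perp}\tilde T\bigl(f\mathbbm{1}_{\mathbb R^n\setminus Q^*}\bigr)(\xi)\bigr|,
$$
namely $\|\mathcal M_{\tilde T,N}f\|_{L^{1,\infty}(\mathbb R^n)}\lesssim\varepsilon\|f\|_{L^1(\mathbb R^n)}$, as well as the pointwise inequality $|P_N^{\perp}\tilde Tf(x)|\le\mathcal M_{\tilde T,N}f(x)$ for a.e.\ $x$ — obtained by letting $Q\downarrow\{x\}$ through cubes of side $<2^{-N}$, all of which lie in $\mathcal D_N^c$ — which is the analogue of Remark~\ref{TfPointwiseControl}. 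The weak-type estimate follows the scheme of the proof of Lemma~\ref{HaarTruncationWeakType}: Corollary~\ref{paratildeT} removes the inner $\esssup$ at the cost of $\bar\varepsilon_Q Mf$ and, combined with $P_N^{\perp}\tilde T(f\mathbbm{1}_{\mathbb R^n\setminus Q^*})=P_N^{\perp}\tilde Tf-P_N^{\perp}\tilde T(f\mathbbm{1}_{Q^*})$, yields the pointwise bound $\mathcal M_{\tilde T,N}f(x)\lesssim M(P_N^{\perp}\tilde Tf)(x)+\varepsilon\, Mf(x)$ (using $\bar\varepsilon_Q\le\varepsilon_Q<\varepsilon$ on $\mathcal D_N^c$). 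One then applies the Calderón--Zygmund decomposition of Lemma~\ref{CZDecomposition} to $f$ at height $\lambda/\varepsilon$: the good part is controlled by the $L^2$-boundedness of $M$, the bound $\|P_N^{\perp}\tilde T\|_{L^2\to L^2}<\varepsilon$, and $\|g\|_{L^2}^2\lesssim(\lambda/\varepsilon)\|f\|_{L^1}$; the bad part is handled exactly as in Lemma~\ref{HaarTruncationWeakType} via the mean-zero property of the $b_j$, the compact Calderón--Zygmund kernel estimates for $\tilde T$, and Lemma~\ref{tildeT}; the total measure of the stopping cubes is $\le\frac{\varepsilon}{\lambda}\|f\|_{L^1}$.

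With these two facts in hand, the sparse family is built by the stopping-time iteration of the proof of Theorem~\ref{HaarSparseBound}. Take a dyadic cube $Q_0\supseteq\supp f$ with $\ell(Q_0)>2^N$ (so $Q_0\in\mathcal D_N^c$), put $Q_0\in\mathcal S$ — so that $S|f|$ gains the term $\langle|f|\rangle_{Q_0^*}\mathbbm{1}_{Q_0}$ — and set
$$
E_{Q_0}:=\bigl\{x\in Q_0:\ \mathcal M_{\tilde T,N}f(x)>C\varepsilon\langle|f|\rangle_{Q_0^*}\bigr\}\cup\bigl\{x\in Q_0:\ \sup_{Q\ni x}\langle|f|\rangle_{Q^*}>C\langle|f|\rangle_{Q_0^*}\bigr\},
$$
with $C$ large enough (depending on the implied constants in the weak-type bounds above and on $\|f\|_{L^1}=|Q_0^*|\langle|f|\rangle_{Q_0^*}$) that $|E_{Q_0}|\le\frac12|Q_0|$. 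Let $\mathcal E_{Q_0}$ be the maximal dyadic cubes in $E_{Q_0}$ and add them to $\mathcal S$. On $Q_0\setminus E_{Q_0}$ the pointwise inequality gives $|P_N^{\perp}\tilde Tf(x)|\le\mathcal M_{\tilde T,N}f(x)\le C\varepsilon\langle|f|\rangle_{Q_0^*}\lesssim\varepsilon\, S|f|(x)$. On each $P\in\mathcal E_{Q_0}$ one writes $f=f\mathbbm{1}_{P^*}+f\mathbbm{1}_{\mathbb R^n\setminus P^*}$ and, as in the decomposition~\eqref{decomp} of the Haar proof, splits $P_N^{\perp}\tilde Tf$ on $P$ into a part living at scales strictly above $P$ (constant on $P$, hence evaluated at a point of $\widehat P\setminus E_{Q_0}$ and bounded by $C\varepsilon\langle|f|\rangle_{Q_0^*}$ via the pointwise inequality), a difference-operator part at scale $\widehat P$ (which either vanishes, because $P_N^{\perp}$ kills the relevant Haar functions when $\widehat P$ is lagom, or is $\lesssim\bar\varepsilon_{\widehat P}Mf\lesssim\varepsilon\langle|f|\rangle_{Q_0^*}$ at a good point by Corollary~\ref{paratildeT}), a term $\lesssim\varepsilon\langle|f|\rangle_{P^*}\mathbbm{1}_P$, and a localized piece of $P_N^{\perp}\tilde T(f\mathbbm{1}_{P^*})$ at scales below $P$, which is treated by iterating the whole construction on $P$. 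By disjointness $\sum_{P\in\mathcal E_{Q_0}}|P|\le|E_{Q_0}|\le\frac12|Q_0|$, and likewise at every later stage, so $\mathcal S$ is sparse.

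Finally, for $x\notin Q_0$ I would add to $\mathcal S$ the chain $\widehat{Q_0}\subsetneq\widehat{\widehat{Q_0}}\subsetneq\cdots$, which is sparse (ratio $2^{-n}$) and whose union with the previous family is still sparse; if $x$ lies in such a cube $R$ but not in its predecessor in the chain, then $P_N^{\perp}\tilde Tf(x)=P_N^{\perp}\tilde T(f\mathbbm{1}_{Q_0})(x)$ and the compact kernel decay~\eqref{decaycompactCZ} gives $|P_N^{\perp}\tilde Tf(x)|\lesssim\varepsilon\langle|f|\rangle_{R^*}$, the factor $F_K$ furnishing the $\varepsilon$ since $R\in\mathcal D_N^c$ — this is essentially the computation in Lemma~\ref{paraT}. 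Assembling the three contributions yields the claim with $S|f|=\sum_{R\in\mathcal S}\langle|f|\rangle_{R^*}\mathbbm{1}_R$. I expect the main obstacle to be the first step: faithfully transplanting the Calderón--Zygmund argument of Lemma~\ref{HaarTruncationWeakType} to $P_N^{\perp}\tilde T$ while keeping the weak-type constant equal to $\varepsilon$, which hinges on two points that must be verified with care — that the $L^2$-compactness of $\tilde T$ makes $\|P_N^{\perp}\tilde T\|_{L^2\to L^2}$ small, and that $P_N^{\perp}$ annihilates every lagom-scale Haar function, so that only coefficients $\varepsilon_Q,\bar\varepsilon_Q$ with $Q\in\mathcal D_N^c$ intervene in every estimate.
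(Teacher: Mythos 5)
Your overall scaffolding — weak-type estimate for an auxiliary operator, followed by a stopping-time iteration that produces the sparse family — is the right shape, and your choice of a grand maximal truncation $\mathcal M_{\tilde T,N}$ (à la Lerner~2016) is a genuine alternative to what the paper does. The paper instead runs a Lerner--Ombrosi iteration: it applies the weak $(1,1)$ bound for $P_N^\perp\tilde T$ itself (from Lemma~\ref{tildeT}) directly inside the exceptional set, introduces a \emph{second} exceptional set $\tilde E_Q:=\{M(\mathbbm 1_{E_Q})>2^{-(n+1)}\}$ so that each maximal stopping cube $P$ satisfies $|P\cap E_Q|\leq \tfrac12|P|$, and then finds good points $x'\in P\setminus E_Q$ without ever forming a grand maximal operator. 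Both schemes are legitimate, so this by itself is not a defect, only a different route.

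There are, however, two genuine gaps in the proposal. The first is structural: you say that on a stopping cube $P$ you ``split $P_N^\perp\tilde Tf$ as in the decomposition~\eqref{decomp} of the Haar proof'' into a part at scales above $P$, a difference-operator part at scale $\widehat P$, a scale-$P$ term, and a localized piece. That decomposition is available for a Haar multiplier precisely because $\langle Tf,h_R\rangle=\varepsilon_R\langle f,h_R\rangle$ makes $T$ diagonal in the Haar frame, so that ``scale decomposition of the output'' and ``support decomposition of the input'' coincide. For a Calder\'on--Zygmund operator $\tilde T$ this fails: $\langle\tilde Tf,h_R\rangle$ has no local structure in $R$, and there is no analogue of $T_P f=\sum_{I\in\mathcal D(P)}\varepsilon_I\langle f,h_I\rangle h_I$. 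What actually replaces~\eqref{decomp} is the purely support-based split $P_N^\perp\tilde T(f\mathbbm 1_Q)(x)=P_N^\perp\tilde T(f\mathbbm 1_{Q\setminus P^*})(x)+P_N^\perp\tilde T(f\mathbbm 1_{P^*})(x)$, combined with Corollary~\ref{paratildeT} to compare the first term at $x\in P$ and at a good point $x'\in P$; this is what the paper does. You do mention the support split, but you then layer on a Haar-scale decomposition that is not available here.

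The second gap is quantitative and more serious: nothing in your iteration guarantees that the stopping cubes stay in $\mathcal D_N^c$. You start from a single large $Q_0$ with $\ell(Q_0)>2^N$, and the maximal cubes of $E_{Q_0}$ can perfectly well be lagom. Your grand maximal operator $\mathcal M_{\tilde T,N}$ takes its supremum only over $Q\in\mathcal D_N^c$, so a lagom stopping cube $P$ cannot be plugged into it; likewise, Corollary~\ref{paratildeT} yields the constant $\bar\varepsilon_P$, which is $\leq\varepsilon$ \emph{only} when $P\in\mathcal D_N^c$. The paper dodges this by first partitioning $Q_0$ into the family $\mathcal Q$ of cubes of side length exactly $2^{-(N+2)}$ — every dyadic descendant of such a cube is automatically in $\mathcal D_N^c$ — and then applying the stopping-time construction to each $\tilde T(f\mathbbm 1_Q)$ separately, paying a harmless loss of at most two sparse operators to restore sparseness. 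Without that initial reduction your exceptional-set constants lose the factor $\varepsilon$ at lagom scales, and the claimed $\varepsilon S|f|$ bound fails.

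A smaller point: the argument that $\|P_N^\perp\tilde T\|_{L^2\to L^2}\to 0$ ``since $\tilde T$ is compact and $P_N^\perp\to 0$ strongly'' glosses over the fact that $\tilde T=\tilde T_N=T-\Pi^*_{P_N(T^*1)}$ itself depends on $N$; one must write $\tilde T_N=(T-\Pi^*_{T^*1})+\Pi^*_{P_N^\perp(T^*1)}$, use compactness of $T-\Pi^*_{T^*1}$ for the first term and $\|P_N^\perp(T^*1)\|_{\BMO}\to 0$ for the second. In any case the paper never needs this step: Lemma~\ref{tildeT} already provides the weak $(1,1)$ bound with constant $\sup_{Q\in\mathcal D_N^c}\varepsilon_Q$, and the $L^2$ smallness is bypassed entirely.
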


\begin{proof} 
Without loss of generality, suppose there is a dyadic cube $B$ such that $\ell(B)>1$ and $\supp f\subseteq B$. Let $\{\varepsilon_Q\}_{Q\in {\mathcal D}}$ be the sequence in the statement of Lemma \ref{tildeT} which satisfies 
$\displaystyle\lim_{N\rightarrow \infty} \sup_{Q \in \mathcal{D}_{N}^c} \varepsilon_Q =0$.
Given $\varepsilon>0$, let $N_0>0$ be such that $\displaystyle\sup_{Q\in {\mathcal D}_{N}^c} \varepsilon_Q <\varepsilon $ for all $N>N_0$.
Fix $N>N_0$ and let $Q_{0}$ be a cube such that $B\subsetneq Q_0$ and $\dist(B, Q_0^c)\geq 2^{N+3}\ell(B)$.

We first establish the sparse estimate outside of $Q_0$. 
For $j\geq 0$, we write $Q_j:=2^jQ_0$ and for $j\geq 1$ we define $P_j:=Q_{j}\backslash Q_{j-1}$. 
Note that the family $\{ Q_j\}_{j\geq 0}$ is sparse by construction.

Let $x\in P_j$. By definition,
\begin{align}\label{outside}
P_N^\perp \tilde T f(x)
	&=\tilde T f(x)-\sum_{R\in {\mathcal D}_N}\langle \tilde Tf,h_R\rangle h_R(x);
\end{align}
we will bound each term separately. For the first term, since $x\notin \supp f$, we can write
\begin{align*}
	|\tilde Tf(x)|&
	= \bigg|\int_{B}K(x,y)f(y)\,dy\bigg|
	\leq 
	\int_{B}\frac{F_K(x,y)}{|x-y|^n}|f(y)|\,dy
\end{align*}
where $K$ denotes the kernel of $\tilde T$ and  
$$
F_K(x,y)=L(|x-y|)S(|x-y|)D\Big(1+\frac{|x+y|}{1+|x-y|}\Big).
$$
Since 
$|x-y|\approx \ell(Q_{j})$ and 
$|y-c(Q_0)|\lesssim \ell(Q_0)$ for $y \in B$, we have by the same reasoning used in Lemma \ref{paraT} that
$L(|x-y|)\leq L(\ell(Q_j))$, 
$S(|x-y|)\leq S(\ell(Q_j))$, and
$D(1+\frac{|x+y|}{1+|x-y|})\lesssim D(\hspace{-.03in}\rdist (Q_j,\mathbb B))$.
Then 
\begin{align}\label{Foutside}
F_{K}(x,y)
&\leq L(\ell(Q_j))
S(\ell(Q_j))D(\hspace{-.03in}\rdist (Q_j,\mathbb B))
=F_{K}(Q_j)\leq \varepsilon,
\end{align}
where in the last inequality we used that $\ell(Q_j)\geq \ell(Q_0)>2^{N+3}\ell(B)\geq 2^{N+3}$, and thus $Q_j\in {\mathcal D}_N^c$. With this and the fact that 
$|x-y|\approx \ell(Q_j)$, we have
\begin{align*}
	|\tilde Tf(x)|&\lesssim \frac{\varepsilon }{|Q_j|}\|f\|_{L^1(\mathbb{R}^n)}
	= \varepsilon \langle |f| \rangle_{Q_j}\mathbbm{1}_{Q_{j}}(x).
\end{align*}

On the other hand, we note that the second term in \eqref{outside} is defined by a telescopic sum  
such that, for fixed $x\in P_{j}$, the collection of cubes $R\in {\mathcal D}_N$ with  
$x\in \widehat{R}$ form a convex chain. With this we mean that if $R_1, R_2\in \mathcal D_N$ with $R_1\subseteq R_2$, then any other cube $R'\in \mathcal D$ with $R_1\subseteq R' \subseteq R_2$ satisfies $R'\in \mathcal D_N$ and so, $R'$ is also in the sum. Moreover, we can obviously assume that the sum is non-empty. In that case, we have
\begin{align*}
\sum_{R\in {\mathcal D}_N}\langle \tilde Tf,h_R\rangle h_R(x)
&
=
\langle \tilde Tf\rangle_{J}\mathbbm{1}_{J}(x)-
\langle \tilde Tf\rangle_{I}\mathbbm{1}_{I}(x),
\end{align*}
where $I,J\in \mathcal D$ are such that 
$x\in J\subseteq I$, 
and $\ell(J),\ell(I)\leq 2^{N+1}$. 
We can now apply the same ideas to bound both terms, and so we only write the estimates for the second term. Since $x\in I\cap P_j$, we have as before
$2^{j-2}\ell(Q_0)<|x-y|$
for all $y\in B\subseteq Q_0$.
On the other hand, 
$$
2^{j-2}\ell(Q_0)\geq \ell(Q_0)/2\geq 
2^{N+2}\ell(B)
\geq 2\ell(I),
$$
which implies $\ell(I)\leq 2^{j-3}\ell(Q_0)$.
With this and 
$|t-x|\leq \ell(I)$ for all 
$t\in I$, we get
$$
|t-y|\geq |x-y|-|t-x|
\geq 2^{j-2}\ell(Q_0)
-\ell(I)
\geq 2^{j-3}\ell(Q_0)
$$
and 
$$
|t-y|\leq |x-y|+|t-x|
\lesssim 2^{j}\ell(Q_0)
+\ell(I)
\leq 2^{j+1}\ell(Q_0).
$$
Therefore, 
$
|t-y|
\approx \ell(Q_j)
$. We also have $|y-c(Q_0)|\leq \ell(Q_0)/2$. 
Write
\begin{align*}
|\langle \tilde Tf\rangle_{I}\mathbbm{1}_{I}(x)|&= \frac{1}{|I|}\Big|\int_{I}\int_{B}K(t,y)f(y)\,dy\, dt\Big|
\\
&
\lesssim \frac{1}{|I|}\int_{I}\int_{B}\frac{F_K(t,y)}{|x-y|^n}|f(y)|\,dy dt
\\
&\lesssim \frac{F_K(Q_j)}{|Q_j|}\|f\|_{L^1(\mathbb{R}^n)}
\leq \varepsilon \langle |f| \rangle_{Q_j}\mathbbm{1}_{Q_{j}}(x),
\end{align*}
where the last inequality follows from  
\eqref{Foutside}.

We now work to establish the sparse bound inside $Q_0$. 
For this local piece, we follow the ideas from \cite{LO2019}
to define recursively the desired sparse family $\mathcal S$ and sparse operator $S$. 
Let $\mathcal{D}_N^c(Q_0):=\mathcal{D}(Q_0)\cap \mathcal{D}_N^c$ and 
$\mathcal Q:=\left\{Q \in \mathcal{D}_N^c(Q_0) : \ell(Q)=2^{-(N+2)}\right\}$. We decompose $\tilde Tf$ as 
$$
    \tilde Tf=\sum_{Q\in \mathcal Q}\tilde T(f\mathbbm{1}_{Q}).
$$
If we assume the desired sparse domination result holds for $P_N^{\perp}\tilde T(f\mathbbm{1}_{Q})$, then by disjointness of the cubes $Q$, we can deduce a similar sparse estimate for $P_N^{\perp}\tilde T$:
\begin{align*}
    |P_N^{\perp}\tilde Tf|&\leq \sum_{Q\in \mathcal Q}|P_N^{\perp}\tilde T(f\mathbbm{1}_{Q})|
    \lesssim \varepsilon \sum_{Q\in \mathcal Q}S|f\mathbbm{1}_{Q}|
\\
&
= \varepsilon \sum_{Q\in \mathcal Q} \sum_{R\in {\mathcal S}(Q)}\langle |f|\rangle_{R}
\mathbbm{1}_{R}
    \leq \varepsilon \sum_{R\in {\mathcal S}(Q_{0})}\langle |f|\rangle_{R}
\mathbbm{1}_{R}.
\end{align*}
Therefore, we will only prove the sparse estimate for each $P_N^{\perp}\tilde T(f\mathbbm{1}_{Q})$.

We start by adding all cubes $Q\in \mathcal Q$ to the family $\mathcal S$ and functions $\langle |f|\rangle_{Q}\mathbbm{1}_{Q}$ to the sparse operator $S|f|$. These cubes are pairwise disjoint and satisfy $\sum_{Q\in \mathcal Q}|Q|= |Q_{0}|$. This family does not satisfy the sparseness condition, 
but we can divide the family into two disjoint subfamilies ${\mathcal Q}_1$, ${\mathcal Q}_2$ containing exactly half of the cubes, each satisfying the sparseness condition $\sum_{Q\in {\mathcal Q}_i}|Q|= |Q_{0}|/2$. This leads to a domination by at most two sparse operators, which is acceptable. To simplify notation, we still denote each subfamily by $\mathcal{Q}$.

Fix $Q\in \mathcal Q$ and define
\begin{equation}\label{EQ}
E_{Q}:=\{x\in Q : M(f\mathbbm{1}_{Q})(x)> c'\langle |f|\rangle_{Q}\}
	\cup \{x\in Q : |P_N^\perp \tilde T(f\mathbbm{1}_{Q})(x)|> c'\varepsilon \langle |f|\rangle_{Q}\}, 
\end{equation}
where $c'>0$ is chosen so that 
$$
	|E_{Q}|\leq \frac{1}{2^{n+2}}|Q|.
$$
To show that $c'>0$ is independent of $\varepsilon$, from Lemma \ref{tildeT} we have
\begin{align*}
|E_{Q}|&\leq \frac{C}{c'\langle |f|\rangle_Q}\| f\mathbbm{1}_{Q}\|_{L^1(\mathbb R^n)}
+\frac{C\sup_{Q\in {\mathcal D}_N^c}|\varepsilon_Q|}{c'\varepsilon \langle |f|\rangle_Q}\| f\mathbbm{1}_{Q}\|_{L^1(\mathbb R^n)}
\leq \frac{2C}{c'}|Q|\leq \frac{1}{2^{n+2}}|Q|
\end{align*}
by choosing $c'>C2^{n+3}$.
We note that the constant $C>0$ may depend on the dimension $n$ but not on $\varepsilon >0$.

We define another exceptional set
$$
	\tilde E_{Q}:=\{x\in Q : M(\mathbbm{1}_{E_{Q}})(x)> 2^{-(n+1)}\},
$$
and define $\mathcal E_{Q}$ to be the family of maximal (with respect to inclusion) dyadic cubes $P$ contained in $\tilde E_{Q}$. 
Note that for each $Q\in \mathcal Q$, the containment $\mathcal{E}_{Q}\subseteq \mathcal{D}_N^c$ holds. Moreover, 
due to maximality, the cubes $P\in \mathcal E_{Q}$ are pairwise disjoint, and thus $\mathcal E_{Q}$ is a sparse collection:
\begin{equation}\label{Psparse}
\sum_{P\in \mathcal E_{Q}}|P|\leq |E_{Q}|\leq \frac{1}{2^{n+2}}|Q|.
\end{equation}
We see now that 
\begin{align}\label{PEQ}
	|P \cap E_{Q}| \leq \frac{1}{2}|P|.
\end{align}
By maximality, $2P\cap (Q\backslash E_Q)\neq \emptyset $, and so there exists 
$x\in 2P$ such that $M(\mathbbm{1}_{E_{Q}})(x)\leq 2^{-(n+1)}$. Then
$$
\frac{|E_{Q}\cap 2P|}{|2P|}\leq 2^{-(n+1)},
$$
which proves the upper inequality. 
Note that the 
inequality in \eqref{PEQ} implies $|P \setminus E_{Q}| > \frac{1}{2}|P|$. We can now estimate $|P_N^\perp \tilde T(f\mathbbm{1}_Q)(x)|$ for $x \in Q$.
First, 
for $x\in Q\backslash 
E_{Q}$ we trivially have 
$$
|P_N^\perp \tilde T(f\mathbbm{1}_{Q})(x)|\leq  c'\varepsilon \langle |f|\rangle_{Q}
=c'\varepsilon \langle |f|\rangle_{Q}
\mathbbm{1}_{Q}(x).
$$
Second, to obtain an estimate on $E_Q$, 
we note that $\left|E_{Q} \setminus \bigcup_{P\in \mathcal E_{Q}}P\right|
\leq \left|\tilde E_{Q} \setminus \bigcup_{P\in \mathcal E_{Q}}P\right|=0$, 
and so, we do not need to bound $|P_N^\perp \tilde T(f\mathbbm{1}_Q)(x)|$ for $x \in E_Q \setminus \bigcup_{P\in \mathcal E_{Q}}P$.

It only remains to control $|P_N^\perp \tilde T(f\mathbbm{1}_Q)(x)|$ for $x \in \bigcup_{P\in \mathcal E_{Q}}P$. For any $P\in \mathcal E_{Q}$, any $x \in P$, and any $x' \in P\setminus E_Q$, we decompose $P_N^\perp \tilde Tf(x)$ as follows: 
\begin{align*}
	|P_N^\perp \tilde T(f\mathbbm{1}_Q)&(x)|\leq |P_N^\perp \tilde T(f\mathbbm{1}_{Q\setminus P^*})(x)|+|P_N^\perp \tilde T(f\mathbbm{1}_{P^*})(x)|\\
	&\leq |P_N^\perp \tilde T(f\mathbbm{1}_{Q\setminus P^*})(x)-P_N^\perp \tilde T(f\mathbbm{1}_{Q\setminus P^*})(x')|+|P_N^\perp \tilde T(f\mathbbm{1}_{Q\setminus P^*})(x')|
	\\
	&\hskip50pt +|P_N^\perp \tilde T(f\mathbbm{1}_{P^*})(x)|\\
	&\leq |P_N^\perp \tilde T(f\mathbbm{1}_{Q\setminus P^*})(x)-P_N^\perp \tilde T(f\mathbbm{1}_{Q\setminus P^*})(x')|+|P_N^\perp \tilde T(f\mathbbm{1}_Q)(x')|+|P_N^\perp \tilde T(f\mathbbm{1}_{P^*})(x')|
	\\
	&\hskip50pt +|P_N^\perp \tilde T(f\mathbbm{1}_{P^*})(x)|\\
	&:= \text{I} + \text{II} + \text{III} + \text{IV}.
\end{align*}
The second term is easily controlled since $x' \not \in E_{Q}$
implies $|P_N^\perp \tilde T(f\mathbbm{1}_Q)(x')|\leq c' \varepsilon \langle |f| \rangle_{Q}$, and so
$$
	\text{II} \leq c' \varepsilon \langle |f| \rangle_{Q}\mathbbm{1}_{Q}(x).
$$

For the first and third terms, define 
$$
E_{P}':=
	\{x \in P: |P_N^\perp \tilde T(f\mathbbm{1}_{P^*})(x)|>c' \varepsilon \langle |f| \rangle_{P^*}\}.
$$
By Lemma \ref{tildeT}, 
$$	
|E_{P}'|
	\leq \frac{C\varepsilon}{c' \varepsilon \langle |f| \rangle_{P^*}}\|f\mathbbm{1}_{P^*}\|_{L^1(\mathbb{R}^n)}
	\leq \frac{1}{2^{n+2}}|P|.
$$
Then $|P \setminus E_{P}'| > \frac{1}{2}|P|$. This, together with 
$|P \setminus E_{Q}| > \frac{1}{2}|P|$, implies that 
$(P \setminus E_{Q}) \cap (P \setminus E_{P}')\neq \emptyset $. Therefore, there exists $x'\in P$ such that $M(f\mathbbm{1}_Q)(x') \leq c'\langle |f|\rangle_{Q}$ and
$|P_N^\perp \tilde T(f\mathbbm{1}_{P^*})(x')|\leq c' \varepsilon \langle |f|\rangle_{P^*}$. Then, since $(f\mathbbm{1}_Q)\mathbbm{1}_{\mathbb R^n \setminus P^*}=
f\mathbbm{1}_{Q\setminus P^*}$, we can apply Corollary \ref{paratildeT} to obtain
$$
	\text{I}\leq \varepsilon M(f\mathbbm{1}_Q)(x') \leq c'\varepsilon \langle |f|\rangle_{Q}\mathbbm{1}_{Q}(x).
$$
Moreover,
$$
	\text{III}\leq c' \varepsilon \langle |f|\rangle_{P^*}
	= c'\varepsilon \langle |f|\rangle_{P^*}\mathbbm{1}_{P}(x).
$$
We add the cubes $P\in \mathcal{E}_Q$ to the family $\mathcal S$ and the functions $\langle |f|\rangle_{P^*}\mathbbm{1}_{P}$ into $S|f|$. The family $\mathcal{E}_Q$ is sparse by \eqref{Psparse}. 

The fourth term is controlled by iterating the above argument, starting at \eqref{EQ} but replacing $Q$ with $P$, and so defining 
\begin{equation*}
E_{P}:=\{x\in P : M(f\mathbbm{1}_{P})(x)> c'\langle |f|\rangle_{P}\}
	\cup \{x\in P : |P_N^\perp \tilde T(f\mathbbm{1}_{P})(x)|> c'\varepsilon \langle |f|\rangle_{P}\}. 
\end{equation*}

%

\end{proof}

\subsection{Compactness on weighted spaces}

We can now prove the compactness of Calder\'on-Zygmund operators on weighted spaces.

\begin{proof}[Proof of Theorem \ref{CZOWeightedCompactness}]
Let $\tilde T=T-\Pi^{*}_{P_N(T^*1)}$. Since $\Pi^{*}_{P_N(T^*1)}$ is of finite rank, showing that $T$ is compact on $L^2(w)$ is equivalent to showing that $\tilde T$ is compact on $L^2(w)$.
In particular, we argue that for each $\varepsilon>0$, there exists $N_0>0$ such that $$\|P_N^{\perp}\tilde Tf\|_{L^p(w)} \lesssim \varepsilon 
[w]_{A_p}^{\max\left\{1,\frac{p'}{p}\right\}}\|f\|_{L^p(w)}$$ 
for all $N>N_0$ and all $f\in L^p(w)$.

We provide a sketch of the proof using the reasoning of Theorem \ref{Haarboundedness}. By Theorem \ref{domtilde}, there exist $N_0>0$ and a sparse family of cubes ${\mathcal S}$ such that  
$$
	|P_N^\perp \tilde Tf(x)|\lesssim \varepsilon \sum_{R \in \mathcal{S}}  \langle |f| \rangle_{R^*} \mathbbm{1}_R(x) =: \varepsilon S|f|(x)
$$
for all $N>N_0$ and almost every $x \in \mathbb{R}^n$.

Let first $p\ge 2$ and set $\sigma=w^{1-p'}$. We use again  
$
    \|T\|_{L^p(w)\rightarrow L^p(w)} = \|T(\cdot\,\sigma)\|_{L^p(\sigma)\rightarrow L^p(w)}
$
and proceed by duality. Let $f\in L^p(\sigma)$ and $g\in L^{p'}(w)$ be nonnegative. 

For each $R\in {\mathcal S}$ we denote by $E(R)$ the set described in 
Theorem \ref{Haarboundedness} that satisfies $E(R)\subseteq R$, $|R|\leq 2 |E(R)|$, and such that given 
$R,R'\in {\mathcal S}$ with $R\neq R'$, the corresponding sets $E(R)$ and $E(R')$ are disjoint. We use these properties, the $A_p$ condition for $w$, the containment $R\subsetneq R^*$, the inequality $|R^*|\lesssim |R|$, and boundedness of the maximal functions $M_{\sigma}$ and $M_{w}$ from Lemma \ref{WeightedMaxBoundedness}, to obtain
\begin{align*}
    \varepsilon \langle S(f\sigma),gw\rangle
    &= \varepsilon \sum_{R\in \mathcal{S}} \langle f\sigma\rangle_{R^*}\int_{R} gw\,dm \\
    &\leq \varepsilon\sum_{R\in \mathcal{S}} \frac{w(R^*)\sigma(R^*)^{p-1}}{|R^*|^p}\frac{|R^*|^{p-1}}{w(R)\sigma(R^*)^{p-1}}\int_{R^*}f\, d\sigma  \int_{R}g\,dw\\
    &\lesssim \varepsilon [w]_{A_p} \sum_{R\in \mathcal{S}}\left(\frac{1}{\sigma(R^*)}\int_{R^*}f\, d\sigma \right)\left(\frac{1}{w(R)}\int_{R}g\,dw\right)|R|^{p-1}\sigma(R^*)^{2-p}\\
    &\leq \varepsilon 2^{p-1}[ w]_{A_p}\sum_{R\in \mathcal{S}}
\langle f\rangle_{R^*, d\sigma } \langle g \rangle_{R, dw} 
    |E(R)|^{p-1}\sigma(E(R))^{2-p}\\
    &\lesssim \varepsilon [w]_{A_p}\sum_{R\in \mathcal{S}}\langle f\rangle_{R^*, d\sigma} \langle g \rangle_{R, dw}
    w(E(R))^{\frac{1}{p'}}\sigma(E(R))^{\frac{1}{p}}\\
    &\leq \varepsilon [w]_{A_p}\left(\sum_{R\in \mathcal{S}}
    \langle f\rangle_{R^*, d\sigma }^p
    \sigma(E(R))\right)^{\frac{1}{p}}\left(\sum_{R}\langle g \rangle_{R, dw}^{p'}w(E(R))\right)^{\frac{1}{p'}}\\
    &
    \lesssim \varepsilon [w]_{A_p}\|M_{\sigma}f\|_{L^p(\sigma)}\|M_wg\|_{L^{p'}(w)}\\
    &\lesssim \varepsilon [w]_{ A_p}\|f\|_{L^p(\sigma)}\|g\|_{L^{p'}(w)}.
\end{align*}
The case $1<p<2$ follows from duality exactly as in the proof of Theorem \ref{Haarboundedness}.



\end{proof}

\begin{bibdiv}
\begin{biblist}
\bib{CAP2019}{article}{
title={Nondoubling Calder\'on-Zygmund theory: a dyadic approach},
author={J. M. Conde-Alonso},
author={J. Parcet},
journal={J. Fourier Anal. Appl.},
volume={25},
date={2019},
number={4},
pages={1267--1292}
}

\bib{CAR2016}{article}{
title={A pointwise estimate for positive dyadic shifts and some applications},
author={J. M. Conde-Alonso},
author={G. Rey},
journal={Math. Ann.},
volume={365},
date={2016},
number={3-4},
pages={1111--1135}
}

\bib{HMW1973}{article}{
title={Weighted norm inequalities for the conjugate function and Hilbert transform},
author={R. Hunt},
author={B. Muckenhoupt},
author={R. Wheeden},
journal={Trans. Amer. Math. Soc.},
volume={176},
date={1973},
pages={227--251}
}

\bib{HL2020}{article}{
title={Extrapolation of compactness on weighted spaces},
author={T. P. Hyt\"onen},
author={S. Lappas},
date={2020},
journal={Arxiv e-prints: 2003.01606}
}

\bib{H2012}{article}{
title={The sharp weighted bound for general Calder\'on-Zygmund operators},
author={T. P. Hyt\"onen},
journal={Ann. of Math. (2)},
volume={175},
date={2012},
number={3},
pages={1473--1506}
}

\bib{HRT2017}{article}{
title={Quantitative weighted estimates for rough homogeneous singular integrals},
author={T. Hyt\"onen},
author={L. Roncal},
author={O. Tapiola},
journal={Israel J. Math.},
volume={218},
number={1},
date={2017},
pages={133--164}
}

\bib{L2017}{article}{
title={An elementary proof of the $A_2$ bound},
author={M. T. Lacey},
journal={Israel J. Math.},
volume={217},
date={2017},
pages={181--195}
}


\bib{L2013}{article}{
title={A simple proof of the $A_2$ conjecture},
author={A. K. Lerner},
journal={Int. Math. Res. Not.},
volume={\,},
date={2013},
number={14},
pages={3159--3170}
}

\bib{LN2019}{article}{
title={Intuitive dyadic calculus: the basics},
author={A. K. Lerner},
author={F. Nazarov},
journal={Expo. Math.},
volume={37},
date={2019},
number={3},
pages={225--265}
}

\bib{LO2019}{article}{
title={Some remarks on the pointwise sparse domination},
author={A. K. Lerner},
author={S. Ombrosi},
journal={J. Geom. Anal.},
date={2019},
pages={1--17}
}

\bib{LSMP2014}{article}{
title={Dyadic harmonic analysis beyond doubling measures},
author={L. D. L\'opez-S\'anchez},
author={J. M. Martell},
author={J. Parcet},
journal={Adv. Math.},
volume={267},
date={2014},
pages={44--93}
}

\bib{M2012}{article}{
title={Sharp weighted bounds without testing or extrapolation},
author={K. Moen},
journal={Arch. Math. (Basel)},
volume={99},
date={2012},
number={5},
pages={457--466}
}

\bib{OV2017}{article}{
title={Endpoint estimates for compact Calder\'on-Zygmund operators},
author={J-F. Olsen},
author={P. Villarroya},
journal={Rev. Mat. Iberoam.},
volume={33},
date={2017},
pages={1285–-1308}
}

\bib{PPV2017}{article}{
title={Endpoint compactness of singular integrals and perturbations of the Cauchy integral},
author={K-M. Perfekt},
author={S. Pott},
author={P. Villarroya},
journal={Kyoto J. Math.},
volume={57},
date={2017},
number={2},
pages={365--393}
}

\bib{TTV2015}{article}{
title={Weighted martingale multipliers in the non-homogeneous setting and outer measure spaces},
author={C. Thiele},
author={S. Treil},
author={A. Volberg},
journal={Adv. Math.},
volume={285},
date={2015},
pages={1155--1188}
}

\bib{V2015}{article}{
title={A characterization of compactness for singular integrals},
author={P. Villarroya},
journal={J. Math. Pures Appl.},
volume={104},
date={2015},
pages={485--532}
}

\bib{V2019}{article}{
title={A global $Tb$ theorem for compactness and boundedness of Calder\'on-Zygmund operators},
author={P. Villarroya},
journal={J. Math. Anal. Appl.},
volume={480},
date={2019},
number={1}
}

\end{biblist}
\end{bibdiv}

\end{document}